\newcommand{\groupsupport}[2]{The {#1} author was supported by {#2}.}
\newcommand{\NSFFour}{NSF Grant DMS--1811189}
\newcommand{\MAHAddress}{University of California Los Angeles, Los Angeles, CA 90095}
\newcommand{\MAHemail}{\tt{mikehill@math.ucla.edu}}
\newcommand{\Z}{{\mathbb  Z}}
\newcommand{\R}{{\mathbb R}}
\newcommand{\MUR}{MU_{\R}}
\newcommand{\br}{\bar{r}}
\newcommand{\Pic}{Pic}
\newcommand{\Ind}{\big\uparrow} 
\newcommand{\Res}{\big\downarrow} 
\DeclareMathOperator{\Spec}{Spec}
\newcommand{\m}[1]{{\protect\underline{#1}}}
\newcommand{\mZ}{\m{\Z}}
\newcommand{\mM}{\m{M}}
\newcommand{\mB}{\m{B}}
\newcommand{\MUG}{MU^{((G))}}
\newcommand{\cc}[1]{\mathcal #1}
\newcommand{\cO}{\cc{O}}
\newcommand{\Sp}{\mathcal Sp}
\newcommand{\Ninfty}{N_\infty}
\newcommand{\Mackey}{\mathcal Mackey}
\newcommand{\Fin}{\mathcal Fin}
\mathchardef\mhyphen=45
\newcommand{\EM}{Eilenberg-Mac~Lane}
\numberwithin{equation}{section}
\newtheorem{theorem}{Theorem}[section]
\newtheorem{lemma}[theorem]{Lemma}
\newtheorem{corollary}[theorem]{Corollary}
\newtheorem{proposition}[theorem]{Proposition}
\newtheorem*{theorem*}{Theorem}
\newtheorem*{proposition*}{Proposition}
\theoremstyle{remark}
\newtheorem{remark}[theorem]{Remark}
\newtheorem{notation}[theorem]{Notation}
\theoremstyle{definition}
\newtheorem{definition}[theorem]{Definition}
\newcommand{\ssdiff}[1]{\ssarrow{-1}{#1}\ssmove{1}{-#1}}
\newcommand{\ssreddiff}[1]{\ssarrow[color=red]{-1}{#1}\ssmove{1}{-#1}}
\newcommand{\ssblueline}[1]{\ssline[color=blue]{0}{#1}}
\newcommand{\ssgreenline}[1]{\ssline[color=green]{0}{#1}}
\newcommand{\sscurvedblueline}[1]{\ssline[color=blue, curve=-.1]{0}{#1}}
\def\halfbox{
     \begin{tikzpicture}[x=1.2ex,y=1.2ex]
        \draw (0,0) rectangle +(1,1);
        \fill (0,0) -- (0,.5) -- (1,.5) -- (1,0) -- cycle;
     \end{tikzpicture}
            }
\def\ssray#1#2#3#4#5#6#7{
\ifnum#3>#1
\else
\ifnum#4>#2
\else
   \ssmoveto{#3}{#4}\ssdrop{#5}
   \xval=#3
   \yval=#4
   \advance \xval by #6
   \advance \yval by #7
   \ssray{#1}{#2}{\xval}{\yval}{#5}{#6}{#7}
\fi
\fi
}
\def\sslineray#1#2#3#4#5#6#7#8{
\ifnum#3>#1
\else
\ifnum#4>#2
\else
   \ssmoveto{#3}{#4}\ssline{#5}{#6}\ssmove{-#5}{-#6}
   \xval=#3
   \yval=#4
   \advance \xval by #7
   \advance \yval by #8
   \sslineray{#1}{#2}{\xval}{\yval}{#5}{#6}{#7}{#8}
\fi
\fi
}
\def\sseightfanray#1#2#3#4#5#6{
\ifnum#3>#1\else \ifnum#4>#2\else
   \xval=#3
   \yval=#4
   \ssmoveto{#3}{#4}
   \advance \xval by 1
   \advance \yval by 7
   \ifnum\xval>#1\else \ifnum\yval>#2\else
         \ssline{1}{7}\ssmove{-1}{-7}
   \fi\fi
   \advance \xval by 2
   \advance \yval by -2
   \ifnum\xval>#1\else \ifnum\yval>#2\else
         \ssline{3}{5}\ssmove{-3}{-5}
   \fi\fi
   \advance \xval by 2
   \advance \yval by -2
   \ifnum\xval>#1\else \ifnum\yval>#2\else
         \ssline{5}{3}\ssmove{-5}{-3}
   \fi\fi
   \advance \xval by 2
   \advance \yval by -2
   \ifnum\xval>#1\else \ifnum\yval>#2\else
         \ssline{7}{1}\ssmove{-7}{-1}
   \fi\fi
   \advance \xval by -7
   \advance \yval by -1
   \advance \xval by #5
   \advance \yval by #6
   \sseightfanray{#1}{#2}{\xval}{\yval}{#5}{#6}
\fi
\fi
}
\def\sscoloreightfanray#1#2#3#4#5#6{
\ifnum#3>#1\else \ifnum#4>#2\else
   \xval=#3
   \yval=#4
   \ssmoveto{#3}{#4}
   \advance \xval by 1
   \advance \yval by 7
   \ifnum\xval>#1\else \ifnum\yval>#2\else
         \ssline{1}{7}\ssmove{-1}{-7}
   \fi\fi
   \advance \xval by 2
   \advance \yval by -2
   \ifnum\xval>#1\else \ifnum\yval>#2\else
         \ssline[color=blue]{3}{5}\ssmove{-3}{-5}
   \fi\fi
   \advance \xval by 2
   \advance \yval by -2
   \ifnum\xval>#1\else \ifnum\yval>#2\else
         \ssline[color=green]{5}{3}\ssmove{-5}{-3}
   \fi\fi
   \advance \xval by 2
   \advance \yval by -2
   \ifnum\xval>#1\else \ifnum\yval>#2\else
         \ssline[color=cyan]{7}{1}\ssmove{-7}{-1}
   \fi\fi
   \advance \xval by -7
   \advance \yval by -1
   \advance \xval by #5
   \advance \yval by #6
   \sscoloreightfanray{#1}{#2}{\xval}{\yval}{#5}{#6}
\fi
\fi
}
\def\sseightfandoubleray#1#2#3#4#5#6#7#8{
\ifnum#3>#1\else \ifnum#4>#2\else
   \sseightfanray{#1}{#2}{#3}{#4}{#5}{#6}
   \ssdoublexval=#3
   \ssdoubleyval=#4
   \advance \ssdoublexval by #7
   \advance \ssdoubleyval by #8
   \sseightfandoubleray{#1}{#2}{\ssdoublexval}{\ssdoubleyval}{#5}{#6}{#7}{#8}
\fi\fi
}
\def\sscoloreightfandoubleray#1#2#3#4#5#6#7#8{
\ifnum#3>#1\else \ifnum#4>#2\else
   \sscoloreightfanray{#1}{#2}{#3}{#4}{#5}{#6}
   \ssdoublexval=#3
   \ssdoubleyval=#4
   \advance \ssdoublexval by #7
   \advance \ssdoubleyval by #8
   \sscoloreightfandoubleray{#1}{#2}{\ssdoublexval}{\ssdoubleyval}
                            {#5}{#6}{#7}{#8}
\fi\fi
}
\def\ssantiray#1#2#3#4#5#6#7{
\ifnum#3<#1
\else
\ifnum#4<#2
\else
   \ssmoveto{#3}{#4}\ssdrop{#5}
   \xval=#3
   \yval=#4
   \advance \xval by #6
   \advance \yval by #7
   \ssantiray{#1}{#2}{\xval}{\yval}{#5}{#6}{#7}
\fi
\fi
}
\def\ssraydiff#1#2#3#4#5#6#7#8{
\difflim=#2
\advance \difflim by -#8
\ifnum#3>#1
\else
\ifnum#4>#2
\else
   \ssmoveto{#3}{#4}\ssdrop{#5}
   \ifnum#4>\difflim
   \else
       \ssdiff{#8}
   \fi
   \xval=#3
   \yval=#4
   \advance \xval by #6
   \advance \yval by #7
   \ssraydiff{#1}{#2}{\xval}{\yval}{#5}{#6}{#7}{#8}
\fi
\fi
}
\def\ssrayreddiff#1#2#3#4#5#6#7#8{
\difflim=#2
\advance \difflim by -#8
\ifnum#3>#1
\else
\ifnum#4>#2
\else
   \ssmoveto{#3}{#4}\ssdrop{#5}
   \ifnum#4>\difflim
   \else
       \ssreddiff{#8}
   \fi
   \xval=#3
   \yval=#4
   \advance \xval by #6
   \advance \yval by #7
   \ssrayreddiff{#1}{#2}{\xval}{\yval}{#5}{#6}{#7}{#8}
\fi
\fi
}
\def\ssdoubleray#1#2#3#4#5#6#7#8#9{
\ifnum#3>#1
\else
\ifnum#4>#2
\else
   \ssray{#1}{#2}{#3}{#4}{#5}{#6}{#7}
   \ssdoublexval=#3
   \ssdoubleyval=#4
   \advance \ssdoublexval by #8
   \advance \ssdoubleyval by #9
   \ssdoubleray{#1}{#2}{\ssdoublexval}{\ssdoubleyval}
               {#5}{#6}{#7}{#8}{#9}
\fi
\fi
}
\def\ssantidoubleray#1#2#3#4#5#6#7#8#9{
\ifnum#3<#1
\else
\ifnum#4<#2
\else
   \ssantiray{#1}{#2}{#3}{#4}{#5}{#6}{#7}
   \ssdoublexval=#3
   \ssdoubleyval=#4
   \advance \ssdoublexval by #8
   \advance \ssdoubleyval by #9
   \ssantidoubleray{#1}{#2}{\ssdoublexval}{\ssdoubleyval}{#5}{#6}{#7}{#8}{#9}
\fi
\fi
}
\def\ssrayblueline#1#2#3#4#5#6#7{
\ifnum#3>#1
\else
\ifnum#4>#2
\else
   \ssmoveto{#3}{#4}\ssdrop{#5}\ssblueline{-2}
   \xval=#3
   \yval=#4
   \advance \xval by #6
   \advance \yval by #7
   \ssrayblueline{#1}{#2}{\xval}{\yval}{#5}{#6}{#7}
\fi
\fi
}
\def\ssraygreenline#1#2#3#4#5#6#7{
\ifnum#3>#1
\else
\ifnum#4>#2
\else
   \ssmoveto{#3}{#4}\ssdrop{#5}\ssgreenline{-2}
   \xval=#3
   \yval=#4
   \advance \xval by #6
   \advance \yval by #7
   \ssraygreenline{#1}{#2}{\xval}{\yval}{#5}{#6}{#7}
\fi
\fi
}
\def\ssdoublerayblueline#1#2#3#4#5#6#7#8#9{
\ifnum#3>#1
\else
\ifnum#4>#2
\else
   \ssrayblueline{#1}{#2}{#3}{#4}{#5}{#6}{#7}
   \ssdoublexval=#3
   \ssdoubleyval=#4
   \advance \ssdoublexval by #8
   \advance \ssdoubleyval by #9
   \ssdoublerayblueline{#1}{#2}{\ssdoublexval}{\ssdoubleyval}
                       {#5}{#6}{#7}{#8}{#9}
\fi
\fi
}
\def\ssdoubleraygreenline#1#2#3#4#5#6#7#8#9{
\ifnum#3>#1
\else
\ifnum#4>#2
\else
   \ssraygreenline{#1}{#2}{#3}{#4}{#5}{#6}{#7}
   \ssdoublexval=#3
   \ssdoubleyval=#4
   \advance \ssdoublexval by #8
   \advance \ssdoubleyval by #9
   \ssdoubleraygreenline{#1}{#2}{\ssdoublexval}{\ssdoubleyval}
                       {#5}{#6}{#7}{#8}{#9}
\fi
\fi
}
\def\ssrayblueblueline#1#2#3#4#5#6#7{
\ifnum#3>#1
\else
\ifnum#4>#2
\else
   \ssmoveto{#3}{#4}\ssdrop{#5}\ssblueline{-4}
   \xval=#3
   \yval=#4
   \advance \xval by #6
   \advance \yval by #7
   \ssrayblueblueline{#1}{#2}{\xval}{\yval}{#5}{#6}{#7}
\fi
\fi
}
\def\ssraygreengreenline#1#2#3#4#5#6#7{
\ifnum#3>#1
\else
\ifnum#4>#2
\else
   \ssmoveto{#3}{#4}\ssdrop{#5}\ssgreenline{-4}
   \xval=#3
   \yval=#4
   \advance \xval by #6
   \advance \yval by #7
   \ssraygreengreenline{#1}{#2}{\xval}{\yval}{#5}{#6}{#7}
\fi
\fi
}
\def\ssdoublerayblueblueline#1#2#3#4#5#6#7#8#9{
\ifnum#3>#1
\else
\ifnum#4>#2
\else
   \ssrayblueblueline{#1}{#2}{#3}{#4}{#5}{#6}{#7}
   \ssdoublexval=#3
   \ssdoubleyval=#4
   \advance \ssdoublexval by #8
   \advance \ssdoubleyval by #9
   \ssdoublerayblueblueline{#1}{#2}{\ssdoublexval}{\ssdoubleyval}
                           {#5}{#6}{#7}{#8}{#9}
\fi
\fi
}
\def\ssdoubleraygreengreenline#1#2#3#4#5#6#7#8#9{
\ifnum#3>#1
\else
\ifnum#4>#2
\else
   \ssraygreengreenline{#1}{#2}{#3}{#4}{#5}{#6}{#7}
   \ssdoublexval=#3
   \ssdoubleyval=#4
   \advance \ssdoublexval by #8
   \advance \ssdoubleyval by #9
   \ssdoubleraygreengreenline{#1}{#2}{\ssdoublexval}{\ssdoubleyval}
                           {#5}{#6}{#7}{#8}{#9}
\fi
\fi
}
\def\ssreddiffray#1#2#3#4#5#6#7{
\difflim=#2
\advance \difflim by -#7
\ifnum#3>#1
\else
\ifnum#4>\difflim
\else
       \ssmoveto{#3}{#4}\ssreddiff{#7}
       \xval=#3
       \yval=#4
       \advance \xval by #5
       \advance \yval by #6
       \ssreddiffray{#1}{#2}{\xval}{\yval}{#5}{#6}{#7}
\fi
\fi
}
\def\reddiffdoubleray#1#2#3#4#5#6#7#8#9{
\difflim=#2
\advance \difflim by -#7
\ifnum#3 > #1
\else
\ifnum#4 > \difflim
\else
       \ssreddiffray{#1}{#2}{#3}{#4}{#5}{#6}{#7}
       \ssdoublexval=#3
       \ssdoubleyval=#4
       \advance \ssdoublexval by #8
       \advance \ssdoubleyval by #9
       \reddiffdoubleray{#1}{#2}{\ssdoublexval}{\ssdoubleyval}{#5}{#6}{#7}{#8}{#9}
\fi
\fi
}
\def\ssreddiffantiray#1#2#3#4#5#6#7{
\difflim=#1
\advance \difflim by 1
\ifnum#3< \difflim
\else
\ifnum#4<#2
\else
       \ssmoveto{#3}{#4}\ssreddiff{#7}
       \xval=#3
       \yval=#4
       \advance \xval by #5
       \advance \yval by #6
       \ssreddiffantiray{#1}{#2}{\xval}{\yval}{#5}{#6}{#7}
\fi
\fi
}
\def\reddiffantidoubleray#1#2#3#4#5#6#7#8#9{
\difflim=#2
\ifnum#3<#1
\else
\ifnum#4<\difflim
\else
       \ssreddiffantiray{#1}{#2}{#3}{#4}{#5}{#6}{#7}
       \ssdoublexval=#3
       \ssdoubleyval=#4
       \advance \ssdoublexval by #8
       \advance \ssdoubleyval by #9
       \reddiffantidoubleray{#1}{#2}{\ssdoublexval}{\ssdoubleyval}{#5}{#6}{#7}{#8}{#9}
\fi
\fi
}
\def\ssbluelineray#1#2#3#4#5#6#7{
\difflim=#2
\advance \difflim by -#7
\ifnum#3>#1
\else
\ifnum#4>\difflim
\else
       \ssmoveto{#3}{#4}\ssblueline{#7}
       \xval=#3
       \yval=#4
       \advance \xval by #5
       \advance \yval by #6
       \ssbluelineray{#1}{#2}{\xval}{\yval}{#5}{#6}{#7}
\fi
\fi
}
\def\sscurvedbluelineray#1#2#3#4#5#6#7{
\difflim=#2
\advance \difflim by -#7
\ifnum#3>#1
\else
\ifnum#4>\difflim
\else
       \ssmoveto{#3}{#4}\sscurvedblueline{#7}
       \xval=#3
       \yval=#4
       \advance \xval by #5
       \advance \yval by #6
       \sscurvedbluelineray{#1}{#2}{\xval}{\yval}{#5}{#6}{#7}
\fi
\fi
}
\def\ssbluelineantiray#1#2#3#4#5#6#7{
\ifnum#3<#1
\else
\ifnum#4<#2
\else
       \ssmoveto{#3}{#4}\ssblueline{#7}
       \xval=#3
       \yval=#4
       \advance \xval by #5
       \advance \yval by #6
       \ssbluelineantiray{#1}{#2}{\xval}{\yval}{#5}{#6}{#7}
\fi
\fi
}
\def\ssgreenlineantiray#1#2#3#4#5#6#7{
\ifnum#3<#1
\else
\ifnum#4<#2
\else
       \ssmoveto{#3}{#4}\ssgreenline{#7}
       \xval=#3
       \yval=#4
       \advance \xval by #5
       \advance \yval by #6
       \ssgreenlineantiray{#1}{#2}{\xval}{\yval}{#5}{#6}{#7}
\fi
\fi
}
\def\bluelinedoubleray#1#2#3#4#5#6#7#8#9{
\difflim=#2
\advance \difflim by -#7
\ifnum#3>#1
\else
\ifnum#4>\difflim
\else
       \ssbluelineray{#1}{#2}{#3}{#4}{#5}{#6}{#7}
       \ssdoublexval=#3
       \ssdoubleyval=#4
       \advance \ssdoublexval by #8
       \advance \ssdoubleyval by #9
       \bluelinedoubleray{#1}{#2}{\ssdoublexval}{\ssdoubleyval}{#5}{#6}{#7}{#8}{#9}
\fi
\fi
}
\def\bluelineantidoubleray#1#2#3#4#5#6#7#8#9{
\difflim=#2
\advance \difflim by -#7
\ifnum#3<#1
\else
\ifnum#4<\difflim
\else
       \ssbluelineantiray{#1}{#2}{#3}{#4}{#5}{#6}{#7}
       \ssdoublexval=#3
       \ssdoubleyval=#4
       \advance \ssdoublexval by #8
       \advance \ssdoubleyval by #9
       \bluelineantidoubleray{#1}{#2}{\ssdoublexval}{\ssdoubleyval}{#5}{#6}{#7}{#8}{#9}
\fi
\fi
}
\def\greenlineantidoubleray#1#2#3#4#5#6#7#8#9{
\difflim=#2
\advance \difflim by -#7
\ifnum#3<#1
\else
\ifnum#4<\difflim
\else
       \ssgreenlineantiray{#1}{#2}{#3}{#4}{#5}{#6}{#7}
       \ssdoublexval=#3
       \ssdoubleyval=#4
       \advance \ssdoublexval by #8
       \advance \ssdoubleyval by #9
       \greenlineantidoubleray{#1}{#2}{\ssdoublexval}{\ssdoubleyval}{#5}{#6}{#7}{#8}{#9}
\fi
\fi
}
\def\bluessray#1#2#3#4#5#6#7{
\ifnum#3>#1
\else
\ifnum#4>#2
\else
   \ssmoveto{#3}{#4}\ssdrop[color=blue]{#5}
   \xval=#3
   \yval=#4
   \advance \xval by #6
   \advance \yval by #7
   \bluessray{#1}{#2}{\xval}{\yval}{#5}{#6}{#7}
\fi
\fi
}
\def\violetssray#1#2#3#4#5#6#7{
\ifnum#3>#1
\else
\ifnum#4>#2
\else
   \ssmoveto{#3}{#4}\ssdrop[color=violet]{#5}
   \xval=#3
   \yval=#4
   \advance \xval by #6
   \advance \yval by #7
   \violetssray{#1}{#2}{\xval}{\yval}{#5}{#6}{#7}
\fi
\fi
}
\def\violetssantiray#1#2#3#4#5#6#7{
\ifnum#3<#1
\else
\ifnum#4<#2
\else
   \ssmoveto{#3}{#4}\ssdrop[color=violet]{#5}
   \xval=#3
   \yval=#4
   \advance \xval by #6
   \advance \yval by #7
   \violetssantiray{#1}{#2}{\xval}{\yval}{#5}{#6}{#7}
\fi
\fi
}
\def\bluessantiray#1#2#3#4#5#6#7{
\ifnum#3<#1
\else
\ifnum#4<#2
\else
   \ssmoveto{#3}{#4}\ssdrop[color=blue]{#5}
   \xval=#3
   \yval=#4
   \advance \xval by #6
   \advance \yval by #7
   \bluessantiray{#1}{#2}{\xval}{\yval}{#5}{#6}{#7}
\fi
\fi
}
\def\ssbluelinedoubleray#1#2#3#4#5#6#7#8#9{
\ifnum#3>#1
\else
\ifnum#4>#2
\else
   \ssbluelineray{#1}{#2}{#3}{#4}{#5}{#6}{#7}
   \ssdoublexval=#3
   \ssdoubleyval=#4
   \advance \ssdoublexval by #8
   \advance \ssdoubleyval by #9
   \ssbluelinedoubleray{#1}{#2}{\ssdoublexval}{\ssdoubleyval}
                       {#5}{#6}{#7}{#8}{#9}
\fi
\fi
}
\def\ssgreenlineray#1#2#3#4#5#6#7{
\difflim=#2
\advance \difflim by -#7
\ifnum#3>#1
\else
\ifnum#4>\difflim
\else
       \ssmoveto{#3}{#4}\ssgreenline{#7}
       \xval=#3
       \yval=#4
       \advance \xval by #5
       \advance \yval by #6
       \ssgreenlineray{#1}{#2}{\xval}{\yval}{#5}{#6}{#7}
\fi
\fi
}
\def\ssgreenlinedoubleray#1#2#3#4#5#6#7#8#9{
\ifnum#3>#1
\else
\ifnum#4>#2
\else
   \ssgreenlineray{#1}{#2}{#3}{#4}{#5}{#6}{#7}
   \ssdoublexval=#3
   \ssdoubleyval=#4
   \advance \ssdoublexval by #8
   \advance \ssdoubleyval by #9
   \ssgreenlinedoubleray{#1}{#2}{\ssdoublexval}{\ssdoubleyval}
                       {#5}{#6}{#7}{#8}{#9}
\fi
\fi
}
\def\ZZ{\mathbb{Z}}
\def\UZ{\underline{\ZZ}}
\newcommand{\U}[1]{\underline{#1}}
\title[$\Z$-homotopy fixed points]{The \(\mathbb Z\)-homotopy fixed points of \(C_{n}\) spectra with applications to norms of \(MU_{\mathbb R}\)}
\author{Michael A.~Hill}
\thanks{\groupsupport{first}{\NSFFour}}
\address{\MAHAddress}
\email{\MAHemail}
\author{Mingcong Zeng}
\address{University of Rochester, Rochester, NY 14627}
\email{\tt{mzeng6@ur.rochester.edu}}
\begin{document}

\begin{abstract}
We introduce a computationally tractable way to describe the \(\mathbb Z\)-homotopy fixed points of a \(C_{n}\)-spectrum \(E\), producing a genuine \(C_{n}\) spectrum \(E^{hn\mathbb Z}\) whose fixed and homotopy fixed points agree and are the \(\mathbb Z\)-homotopy fixed points of \(E\). These form a piece of a contravariant functor from the divisor poset of \(n\) to genuine \(C_{n}\)-spectra, and when \(E\) is an \(N_{\infty}\)-ring spectrum, this functor lifts to a functor of \(N_{\infty}\)-ring spectra.

For spectra like the Real Johnson--Wilson theories or the norms of Real bordism, the slice spectral sequence provides a way to easily compute the \(RO(G)\)-graded homotopy groups of the spectrum \(E^{hn\mathbb Z}\), giving the homotopy groups of the \(\mathbb Z\)-homotopy fixed points. For the more general spectra in the contravariant functor, the slice spectral sequences interpolate between the one for the norm of Real bordism and the especially simple \(\mathbb Z\)-homotopy fixed point case, giving us a family of new tools to simplify slice computations.
\end{abstract}

\keywords{equivariant homotopy, slice spectral sequence, homotopy fixed points}
\maketitle

\section{Introduction}

In describing the category of \(K\)-local spectra \cite{BousUnited}, Bousfield considered a ``united'' \(K\)-theory, combining the fixed point Mackey functor for the \(C_{2}\)-spectrum of Atiyah's Real \(K\)-theory \cite{AtiyahKR} with Anderson and Green's ``self-conjugate'' \(K\)-theory \(K_{SC}\) \cite{AndersonKSC, GreenKSC}. Self-conjugate \(K\)-theory assigns to a space \(X\) the isomorphism classes of pairs \((E,\phi)\), where \(E\) is a complex vector bundle on \(X\) and where \(\phi\) is an isomorphism
\[
E\xrightarrow{\cong}\bar{E}.
\]
This can be described as the homotopy fixed points of \(KU\) with respect to the \(\Z\) action given by having a generator act via complex conjugation. Since Real \(K\)-theory is cofree, this is equivalently the \(\Z\)-homotopy fixed points of the \(C_{2}\)-spectrum \(K_{\R}\).

Atiyah's Real \(K\)-theory is part of a bi-infinite family of genuine equivariant cohomology theories built out of the Fujii--Landweber spectrum of Real bordism \cite{Fujii, Landweber} and out of the norms of this to larger groups \cite{HHR}. In the \(C_{2}\)-equivariant context, foundational work of Hu--Kriz described Real analogues of many of the classical spectra encountered in chromatic homotopy \cite{HuKriz}, and Kitchloo--Wilson have used the Real Johnson--Wilson theories to describe new non-immersion results \cite{KWNonImm}. In chromatic height \(2\), the corresponding Real truncated Brown--Peterson spectrum can be taken to be the spectrum of (connective) topological modular forms with a \(\Gamma_{1}(3)\)-structure \cite{HMtmf}.

The norm functor allows us to extend these classical chromatic constructions to larger groups \(G\) which contain \(C_{2}\), starting with the universal case of \(\MUR\) \cite{HHR}. This gives the ``hyperreal'' bordism spectrum \(\MUG\), and the localizations of quotients of it play a role in \(G\)-equivariant chromatic homotopy theory. For example, recent work of Hahn--Shi shows that if \(G\) is a finite subgroup of the Morava stabilizer group \(\mathbb S_{n}\) which contains \(C_{2}=\{\pm 1\}\), then there is a \(G\)-equivariant orientation map
\[
\MUG\to E_{n},
\]
a hyperreal orientation of \(E_{n}\) \cite{HahnShi}.

In all of these cases, we have a finite, cyclic group \(C_{n}\) acting on a spectrum. A choice of generator of that cyclic group gives a surjection \(\Z\to C_{n}\), and this allows us to form the \(\Z\)-homotopy fixed points for any \(C_{n}\)-spectrum.

\begin{theorem}
For any \(C_{n}\)-spectrum \(E\), there is a cofree \(C_{n}\)-spectrum \(E^{hn\Z}\) such that for all subgroups \(C_{d}\subset C_{n}\), we have
\[
\big(E^{hn\Z}\big)^{C_{d}}\simeq E^{h(\tfrac{n}{d}\Z)},
\]
the homotopy fixed points of \(E\) with respect to the subgroups \(\tfrac{n}{d}\Z\).
\end{theorem}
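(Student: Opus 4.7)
The plan is to build \(E^{hn\Z}\) as a \(C_n\)-equivariant function spectrum out of a carefully chosen \(C_n\)-free model of \(B(n\Z)\). Specifically, let \(S^1_n\) denote the \(C_n\)-space \(\R/n\Z\) obtained by quotienting the free contractible \(\Z\)-space \(E\Z = \R\) (with translation action) by the subgroup \(n\Z\). The residual group \(\Z/n\Z = C_n\) acts freely on \(S^1_n\) by rotation, and nonequivariantly \(S^1_n\) is a circle. Define
\[
E^{hn\Z} := F\big((S^1_n)_+, E\big)
\]
in genuine \(C_n\)-spectra.

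To verify cofreeness, note that \(S^1_n\) is a free \(C_n\)-CW complex, so the projection \(EC_n \times S^1_n \to S^1_n\) is a nonequivariant equivalence between free \(C_n\)-spaces and hence a \(C_n\)-equivalence. Applying \(F(-_+, E)\) gives an equivalence
\[
E^{hn\Z} \xrightarrow{\sim} F\big((EC_n \times S^1_n)_+, E\big) = F\big(EC_{n+}, E^{hn\Z}\big),
\]
which is precisely the cofreeness condition. In particular, the categorical and homotopy fixed points of \(E^{hn\Z}\) agree at every subgroup \(C_d \subset C_n\).

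Next I would identify the \(C_d\)-fixed points. For \(C_d = (n/d)\Z/n\Z \subset \Z/n\Z = C_n\), the induced \(C_d\)-action on \(S^1_n\) is still free, with quotient \(S^1_n/C_d = \R/(n/d)\Z = B((n/d)\Z)\) and universal cover \(\R = E((n/d)\Z)\). Using the universal cover, \(C_d\)-equivariant maps \(S^1_n \to E\) correspond bijectively to maps \(\R \to E\) that are equivariant for the preimage subgroup \((n/d)\Z \subset \Z\); the additional \(n\Z\)-invariance needed to descend is automatic, since \(n\Z\) is the kernel of \(\Z \to C_n\) and acts trivially on \(E\). This yields
\[
(E^{hn\Z})^{C_d} \simeq F_{(n/d)\Z}(\R_+, E) = E^{h(n/d)\Z},
\]
with the \((n/d)\Z\)-action on \(E\) coming from \((n/d)\Z \hookrightarrow \Z \twoheadrightarrow C_n\).

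The main technical obstacle will be promoting the pointwise bijection between \(C_d\)-equivariant maps \(S^1_n \to E\) and \((n/d)\Z\)-equivariant maps \(\R \to E\) into an equivalence of genuine \(C_n\)-spectra in a chosen point-set model (e.g. orthogonal \(C_n\)-spectra). This reduces to the standard identification of the categorical fixed points of \(F(Y_+, E)\) on a free \(G\)-CW complex \(Y\) with sections of the associated bundle of spectra \(Y \times_G E \to Y/G\), together with the observation that such sections are computed by \(\pi_1(Y/G)\)-equivariant maps out of the universal cover. Once this framework is in place, the divisor-poset functoriality and further structural enhancements (such as the \(N_\infty\)-ring promotion noted in the introduction) should follow from naturality of the construction in the pair \((C_n, S^1_n)\).
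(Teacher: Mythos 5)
Your construction coincides with the paper's: the space \(\R/n\Z\) with its rotation action is exactly the unit sphere \(S(\lambda)\) of the faithful representation (equivalently \(B(n\Z)\) with its residual \(C_n\)-action), the definition \(E^{hn\Z}=F\big((S^1_n)_+,E\big)\) is the paper's definition, and your cofreeness argument via freeness of the \(C_n\)-action on the circle is the one the paper gives. The only difference is cosmetic: where you identify \((E^{hn\Z})^{C_d}\) by hand through the universal cover \(\R\to\R/n\Z\) (using that \(n\Z\) acts trivially on \(E\)), the paper invokes the iterated homotopy fixed point equivalence \(E^{h\Z}\simeq\big(E^{hn\Z}\big)^{hC_n}\) for the extension \(n\Z\to\Z\to C_n\) and then restricts to intermediate subgroups, which is essentially the same content.
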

We introduce this spectrum in Section~\ref{sec:Quotientingas}. Since \(E^{hn\Z}\) is a genuine \(C_{n}\)-spectrum, we have two kinds of additional structure.
\begin{corollary}
The fixed points for various subgroups assemble into a spectral Mackey functor in the sense of \cite{GuillouMay} and \cite{Barwick} whose value at \(C_{n}/C_{d}\) is the \(\tfrac{n}{d}\Z\)-homotopy fixed points.
\end{corollary}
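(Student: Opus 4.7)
The plan is to invoke the Guillou--May/Barwick theorem identifying genuine \(C_n\)-spectra with spectral Mackey functors on the Burnside \((\infty,1)\)-category of finite \(C_n\)-sets, and then to read off the values from the previous theorem. Since the content of the corollary is really the combination of a general structural fact (genuine spectra are spectral Mackey functors) with the explicit identification already established, the proof is essentially a citation assembly rather than a new computation.

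More precisely, first I would recall that by \cite{GuillouMay} and \cite{Barwick}, the \(\infty\)-category of genuine \(C_n\)-spectra is equivalent to the \(\infty\)-category \(\mathrm{Fun}^{\times}(\Burn_{C_n}, \Sp)\) of spectral Mackey functors, with the equivalence sending a genuine \(C_n\)-spectrum \(X\) to the functor whose value on the orbit \(C_n/C_d\) is the categorical fixed-point spectrum \(X^{C_d}\), and whose restriction, transfer, and conjugation maps are the usual ones induced by maps of orbits. Applying this to the genuine \(C_n\)-spectrum \(E^{hn\Z}\) produced by the preceding theorem immediately gives a spectral Mackey functor whose value at \(C_n/C_d\) is \((E^{hn\Z})^{C_d}\).

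Second, I would substitute the identification already provided by the theorem:
\[
\bigl(E^{hn\Z}\bigr)^{C_d} \simeq E^{h(\tfrac{n}{d}\Z)}.
\]
This is the claimed value, and the Mackey functor structure maps inherit their meaning from the orbit category: restriction \(C_n/C_d \to C_n/C_{d'}\) for \(C_{d} \subset C_{d'}\) corresponds to the natural inclusion of homotopy fixed points \(E^{h(\tfrac{n}{d'}\Z)} \to E^{h(\tfrac{n}{d}\Z)}\), while transfers come from the genuine structure on \(E^{hn\Z}\).

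The only possible obstacle is making sure that the equivalence of the theorem is as genuine \(C_n\)-spectra rather than merely as Borel spectra, since it is precisely the genuine-ness that makes the Mackey functor package apply (a Borel spectrum only produces a coefficient system). This is exactly what the previous theorem asserts, so no further work is needed. Thus the corollary follows formally, and I would present it as a one-line consequence of the theorem together with the Guillou--May and Barwick equivalences.
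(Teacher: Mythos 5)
Your proposal is correct and follows essentially the same route as the paper: the paper likewise notes that any genuine \(G\)-spectrum yields a spectral Mackey functor \(T\mapsto\big(F(T,E)\big)^{G}\) via the Guillou--May spectral enrichment of the Burnside category, and then substitutes the identification \(\big(E^{hn\Z}\big)^{C_{d}}\simeq E^{h(\frac{n}{d}\Z)}\) from the preceding results. Your remark that the genuineness of \(E^{hn\Z}\) (rather than a merely Borel structure) is what makes the Mackey package apply is exactly the point the paper's construction is designed to secure, so nothing further is needed.
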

In particular, we have transfer and restriction maps connecting these homotopy fixed points, just as for any pair of finite index subgroups. More surprisingly, we can connect the underlying spectrum for \(E^{hn\Z}\) to the underlying spectrum of \(E\), and this gives a kind of ``infinite index'' transfer.

\begin{corollary}
The homotopy groups of the \(\Z\)-homotopy fixed points are the \(\Z\)-graded piece of the \(RO(C_{n})\)-graded homotopy groups of \(E^{hn\Z}\).
\end{corollary}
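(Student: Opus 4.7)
The plan is to unwind definitions and invoke the theorem at $d=n$. By construction, the $\Z$-homotopy fixed points of $E$ are $E^{h\Z}$, where $\Z$ acts on $E$ through the chosen surjection $\Z\twoheadrightarrow C_{n}$. Setting $d=n$ in the theorem (so that $\tfrac{n}{d}\Z=\Z$) produces the equivalence
\[
\bigl(E^{hn\Z}\bigr)^{C_{n}}\simeq E^{h\Z}.
\]
This does the heavy lifting; everything else is bookkeeping.

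Next I would recall how $RO(C_{n})$-graded homotopy is defined for a genuine $C_{n}$-spectrum $F$: $\pi_{V}^{C_{n}}F=[S^{V},F]^{C_{n}}$ for $V\in RO(C_{n})$. The $\Z$-graded piece corresponds to restricting the grading to $V=k\in\Z\subset RO(C_{n})$, i.e.\ to trivial virtual representations, in which case
\[
\pi_{k}^{C_{n}}F=\pi_{k}F^{C_{n}}.
\]

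Combining these two inputs yields
\[
\pi_{k}^{C_{n}}E^{hn\Z}\;=\;\pi_{k}\bigl(E^{hn\Z}\bigr)^{C_{n}}\;\cong\;\pi_{k}E^{h\Z},
\]
which is exactly the stated identification. There is no genuine obstacle to overcome: the substantive content, namely the construction of the cofree $C_{n}$-spectrum $E^{hn\Z}$ and the identification of its categorical $C_{n}$-fixed points with $E^{h\Z}$, is already packaged in the theorem, and the corollary is only a reindexing. I would also note in passing that the same argument applied at an arbitrary subgroup $C_{d}\subset C_{n}$ identifies the $\Z$-graded piece of the $C_{d}$-equivariant homotopy of $E^{hn\Z}$ with $\pi_{*}E^{h(\frac{n}{d}\Z)}$, so this reindexing is compatible level-by-level with the spectral Mackey functor structure of the preceding corollary.
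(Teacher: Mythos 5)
Your argument is correct and is exactly the paper's (implicit) reasoning: the corollary is stated as an immediate consequence of the main theorem, obtained by taking $d=n$ to identify $(E^{hn\Z})^{C_{n}}\simeq E^{h\Z}$ and observing that the integer-graded part of the $RO(C_{n})$-graded homotopy of a genuine $C_{n}$-spectrum is the homotopy of its $C_{n}$-fixed points. Your closing remark about compatibility with the Mackey functor structure at intermediate subgroups $C_{d}$ likewise matches the surrounding discussion in the paper.
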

These \(RO(C_{n})\)-graded homotopy groups give information about orientability of representations with respect to the theory \(E^{hn\Z}\) and about the image of the representation ring in the Picard group.

For Real and hyperreal spectra, these \(\Z\)-homotopy fixed points are surprisingly tractable computationally. For Real Johnson--Wilson theories, we can use work of Kitchloo--Wilson to describe the \(RO(G)\)-graded homotopy of \(E_{\R}(n)^{h2\Z}\). For general hyperreal spectra, the slice spectral sequence can be easily understood, giving a complete description of the homotopy ring and a way to determine the Hurewicz image. We discuss this in Section~\ref{sec:Computations}.

The \(C_{n}\)-spectra \(E^{hn\Z}\) arise naturally as part of a family of spectra which we can build out of a \(C_{n}\)-spectrum. Associated to any representation \(V\) of \(C_{n}\) for which \(V^{C_{n}}=\{0\}\), we have an Euler class
\[
a_{V}\colon S^{0}\to S^{V}.
\]
These Euler classes play a fundamental role in equivariant homotopy theory. Inverting \(a_{V}\) gives a model for the nullification which destroys any cells with stabilizer a subgroup \(H\) such that \(V^{H}\neq\{0\}\), and varying the representation allows us to isolate particular conjugacy classes of stabilizers. These classes and some of their basic properties are reviewed in Section~\ref{sec:TheEulerClasses}.

The \(C_{n}\)-spectrum \(E^{hn\Z}\) is the quotient of \(E\) by \(a_{\lambda(1)}\), where \(\lambda(1)\) is the faithful, irreducible representation of \(C_{n}\) given by choosing a primitive \(n\)th root of unity. The representation \(\lambda\) is part of a family of representations \(\lambda(k)\), indexed by the \(n\)th roots of unity. If we instead quotient by other fixed-point free, orientable representations of \(C_{n}\), then we produce other, interesting spectra.
\begin{theorem}
We have a contravariant functor from the divisor poset of \(n\) to \(C_{n}\)-spectra, given by
\[
d\mapsto E/a_{\lambda(d)}.
\]
\end{theorem}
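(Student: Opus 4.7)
The plan is to construct, for each divisibility relation $d \mid d'$ in the divisor poset of $n$, a natural $C_n$-spectrum map $\phi_{d',d}\colon E/a_{\lambda(d')} \to E/a_{\lambda(d)}$, and to check that these compose correctly along chains $d \mid d' \mid d''$. The contravariant direction is consistent with the geometric fixed-point support: $\Phi^{C_k}(E/a_{\lambda(d)})$ is nontrivial iff $k \mid d$, so the cofiber $E/a_{\lambda(d)}$ is supported on the family of subgroups of $C_d$, which grows along divisibility.

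The construction proceeds by comparing the two defining cofiber sequences
\[
E \xrightarrow{a_{\lambda(d')}} \Sigma^{\lambda(d')} E \to E/a_{\lambda(d')}, \qquad E \xrightarrow{a_{\lambda(d)}} \Sigma^{\lambda(d)} E \to E/a_{\lambda(d)}.
\]
It is enough to produce a morphism $\tau_{d',d}\colon \Sigma^{\lambda(d')} E \to \Sigma^{\lambda(d)} E$ satisfying $\tau_{d',d} \circ a_{\lambda(d')} \simeq a_{\lambda(d)}$, for then naturality of mapping cones supplies $\phi_{d',d}$. The datum of $\tau_{d',d}$ is an element of $\pi^{C_n}_{\lambda(d')-\lambda(d)}(S^0)$ whose multiplication by $a_{\lambda(d')}$ recovers $a_{\lambda(d)}$. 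Such an element should be built by exploiting the character identity $\chi_{\lambda(d')} = \chi_{\lambda(d)}^{\otimes d'/d}$ (which cohomologically yields the scalar relation $a_{\lambda(d')} = (d'/d)\,a_{\lambda(d)}$) together with the orientability of the two representations, which allows the scalar relation to be lifted to a comparison map between the representation spheres via the Thom isomorphism for orientable line bundles.

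Naturality in $E$ is automatic since $\tau_{d',d}$ is built from universal $C_n$-sphere data and is subsequently smashed with $E$. Functoriality along a chain $d\mid d'\mid d''$ reduces to the identity $\tau_{d'',d} \simeq \tau_{d',d}\circ \tau_{d'',d'}$ in $\pi^{C_n}_\star(S^0)$, which tracks the multiplicative law $\chi^{d''/d} = \chi^{d'/d}\cdot \chi^{d''/d'}$ on the underlying characters and transports to the level of Euler classes by the same construction.

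The main obstacle I anticipate is exactly the construction of $\tau_{d',d}$ as a \emph{genuine} equivariant stable map: passing from the scalar relation available in cohomology to an honest map between representation spheres requires careful use of the representation-sphere geometry (the fixed loci $\{0,\infty\} \hookrightarrow S^{\lambda(d)}$ and their compatibility) and of the orientation classes witnessing that $\lambda(d)$ and $\lambda(d')$ are orientable. Once this is carried out, verifying the identity $\tau_{d',d}\cdot a_{\lambda(d')} = a_{\lambda(d)}$ and the multiplicative composition law reduces to manipulations in the $RO(C_n)$-graded sphere; the induced maps of cofibers and the assembly into a functor are then formal consequences.
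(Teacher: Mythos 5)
There is a genuine gap: your divisibility goes in the wrong direction, and the comparison class you need does not exist. You ask for $\tau_{d',d}\in\pi^{C_n}_{\lambda(d')-\lambda(d)}(S^0)$ with $\tau_{d',d}\cdot a_{\lambda(d')}\simeq a_{\lambda(d)}$, i.e.\ you need $a_{\lambda(d')}$ to divide $a_{\lambda(d)}$. For $d$ a proper divisor of $d'$ this is impossible: restrict to the subgroup $C_{d'}\subset C_n$. Since $\lambda(d')$ is $C_{d'}$-trivial, $i_{C_{d'}}^{\ast}a_{\lambda(d')}$ lies in $\pi^{C_{d'}}_{-2}(S^0)=0$, so any multiple of $a_{\lambda(d')}$ restricts to zero on $C_{d'}$; but $i_{C_{d'}}^{\ast}a_{\lambda(d)}$ is the Euler class of a fixed-point free representation of $C_{d'}$ and is essential. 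So the identity $a_{\lambda(d)}=\tau_{d',d}\,a_{\lambda(d')}$ cannot hold. Relatedly, the ``scalar relation $a_{\lambda(d')}=(d'/d)a_{\lambda(d)}$'' does not typecheck in the $RO(C_n)$-graded sphere (the two classes sit in different degrees); it is only a statement in Borel cohomology after multiplying by orientation classes, and that relation does not lift to $S^0$ in the form you need. The divisibility that actually exists is the opposite one: the $(d'/d)$-power map $z\mapsto z^{d'/d}$ gives an equivariant map $S^{\lambda(d)}\to S^{\lambda(d')}$ exhibiting $a_{\lambda(d')}=\tfrac{a_{\lambda(d')}}{a_{\lambda(d)}}\,a_{\lambda(d)}$, i.e.\ $a_{\lambda(d)}$ divides $a_{\lambda(d')}$ when $d\mid d'$.

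The fix is to put the comparison map on the other end of the cofiber sequence. With the paper's convention $\Sigma^{-\lambda(d')}E\xrightarrow{a_{\lambda(d')}}E\to E/a_{\lambda(d')}$, take the identity on $E$ and the (dual of the) power-map class $S^{-\lambda(d')}\to S^{-\lambda(d)}$ on the left; commutativity is exactly the true divisibility above, and the induced map of cofibers is $E/a_{\lambda(d')}\to E/a_{\lambda(d)}$, contravariant as desired. Even cleaner, and this is what the paper does: use $E/a_{\lambda(k)}\simeq F(S(\lambda(k))_+,E)$ and the power maps on unit circles $S(\lambda(d))\to S(\lambda(d'))$, $z\mapsto z^{d'/d}$, which form a strictly commuting diagram over the divisor poset at the space level ($(z^{d'/d})^{d''/d'}=z^{d''/d}$); applying $F(-,E)$ then gives the contravariant functor with no coherence issues, whereas your approach, even after reversing the direction, would only produce composition laws up to homotopy and would require an extra argument to assemble a functor.
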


These quotients are surprisingly well behaved, both from the additive point of view of \(C_{n}\)-spectra and from a multiplicative point of view.
\begin{theorem}
If \(d\vert n\), let \(\mathcal F_{d}=\{H\subset C_{d}\}\) be the family of subgroups of \(C_{d}\). Then for any \(C_{n}\)-spectrum \(E\), the natural map
\[
E/a_{\lambda(d)}\to F(E\mathcal F_{d+},E/a_{\lambda(d)})
\]
is a weak equivalence.
\end{theorem}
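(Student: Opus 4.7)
The plan is to reduce the cofreeness statement to the contractibility of the fiber $F(\tilde E\mathcal F_d, E/a_{\lambda(d)})$ of the natural map, using the cofiber sequence $E\mathcal F_{d+}\to S^{0}\to\tilde E\mathcal F_d$.

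The first step is to identify $\tilde E\mathcal F_d$ with the representation-sphere colimit $S^{\infty\lambda(d)}:=\operatorname{colim}_{k}S^{k\lambda(d)}$, with bonding maps the Euler classes. Because the kernel of $\lambda(d)$ is exactly $C_{d}$, the fixed points $\lambda(d)^{H}$ are the full $2$-dimensional representation for $H\in\mathcal F_d$ and are zero for $H\notin\mathcal F_d$. Consequently $(S^{\infty\lambda(d)})^H$ is contractible for $H\in\mathcal F_d$ and is $S^{0}$ otherwise, matching the characterizing fixed points of $\tilde E\mathcal F_d$.

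The decisive observation is then that $a_{\lambda(d)}$ acts as an equivalence on $F(S^{\infty\lambda(d)},X)$ for any $C_n$-spectrum $X$. Indeed, the self-map $a_{\lambda(d)}\colon \Sigma^{-\lambda(d)}S^{\infty\lambda(d)}\to S^{\infty\lambda(d)}$ is the index-shift of the defining colimit and is therefore an equivalence; the contravariant functor $F(-,X)$ converts this into invertibility of post-composition with $a_{\lambda(d)}$ on $F(S^{\infty\lambda(d)},X)$. Applying $F(S^{\infty\lambda(d)},-)$ to the defining cofiber sequence $\Sigma^{-\lambda(d)}E\xrightarrow{a_{\lambda(d)}}E\to E/a_{\lambda(d)}$, the first arrow of the resulting cofiber sequence
\[
F(S^{\infty\lambda(d)}, \Sigma^{-\lambda(d)}E)\to F(S^{\infty\lambda(d)}, E)\to F(S^{\infty\lambda(d)}, E/a_{\lambda(d)})
\]
is exactly this invertible action, so the cofiber $F(\tilde E\mathcal F_d, E/a_{\lambda(d)})$ is contractible.

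The step I expect to be most delicate is the identification $\tilde E\mathcal F_d\simeq S^{\infty\lambda(d)}$: it relies crucially on $\lambda(d)$ being the irreducible whose kernel is exactly $C_d$, so that the geometry of the family $\mathcal F_d$ matches that of the representation. Once this is in place, the Euler-class shift argument that converts $a_{\lambda(d)}$ into an invertible action on the function spectrum is purely formal.
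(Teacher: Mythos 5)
Your proof is correct, but it follows a different route than the paper. The paper never passes to the cofiber $\tilde{E}\mathcal F_{d}$: it shows directly that the projection $E\mathcal F_{d+}\wedge S(\lambda(d))_{+}\to S(\lambda(d))_{+}$ is a $C_{n}$-equivalence, because $C_{d}$ is the kernel of $\lambda(d)$, so the action on $S(\lambda(d))$ factors through a free action of $C_{n}/C_{d}$ and all isotropy of $S(\lambda(d))$ lies in $\mathcal F_{d}$; applying $F(-,E)$ and the adjunction $F\big(E\mathcal F_{d+}\wedge S(\lambda(d))_{+},E\big)\cong F\big(E\mathcal F_{d+},E/a_{\lambda(d)}\big)$ finishes the argument. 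You instead use isotropy separation: identify $\tilde{E}\mathcal F_{d}\simeq S^{\infty\lambda(d)}=S^{0}[a_{\lambda(d)}^{-1}]$, observe that $a_{\lambda(d)}$ acts invertibly on $F(S^{\infty\lambda(d)},-)$ while its cofiber is being mapped into, and conclude that the fiber $F(\tilde{E}\mathcal F_{d},E/a_{\lambda(d)})$ is contractible. Both arguments are sound, and your identification of $\tilde{E}\mathcal F_{d}$ is in fact a proposition the paper proves later (in the Greenlees--Tate subsection, for $p$-power index), so your proof front-loads that input; the paper's argument is more elementary in that it only uses the finite complex $S(\lambda(d))$ and the universal property of $E\mathcal F_{d}$, and it applies verbatim to the more general statement with $\lambda(k)$ and $\mathcal F_{k}=\mathcal F_{\gcd(k,n)}$, whereas your argument makes transparent the slogan that $E/a_{\lambda(d)}$ is $\mathcal F_{d}$-local because $a_{\lambda(d)}$ is simultaneously invertible on the $\tilde{E}\mathcal F_{d}$-side and null on the quotient, and it generalizes cleanly to quotients by other Euler classes. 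One small correction: $\lambda(d)$ need not be irreducible (for $n$ even and $d=n/2$ it is two copies of the sign representation); what your argument actually needs is only that $\ker\lambda(d)=C_{d}$ and that $\lambda(d)^{H}=0$ for $H\not\subset C_{d}$, which holds, so this is a slip of wording rather than a gap. You also implicitly use that multiplication by $a_{\lambda(d)}$ on $F(A,E)$ can be computed on either the source or the target; this is a standard fact about the $\pi_{\star}(S^{0})$-module structure on function spectra, but it deserves a sentence.
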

This modified form of cofreeness allows us to connect the existence of certain putative orientation class to actual orientability of certain representation spheres, and we study this in more detail in Section~\ref{sec:FurtherComputations}.

Somewhat surprisingly, if \(E\) is a structured ring spectrum, then the quotients \(E/a_{\lambda(d)}\) inherit a structured multiplication. In fact, the modified cofreeness tends to produce more multiplicative structure than we might have guessed.

\begin{theorem}
Let \(\cO\) be an \(\Ninfty\)-operad, and let \(E\) be an \(\cO\)-algebra. Then for all \(d\vert n\), \(E/a_{\lambda(d)}\) is an \(\cO\)-algebra, and for all \(d'\vert d\), the map
\[
E/a_{\lambda(d)}\to E/a_{\lambda(d')}
\]
is a map of \(\cO\)-algebras.
\end{theorem}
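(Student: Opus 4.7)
The plan is to realize $E/a_{\lambda(d)}$ as the image of $E$ under a functor that preserves $\cO$-algebra structure, with the modified cofreeness of the preceding theorem serving as the bridge between the naive cofibre construction and a universal multiplicative model.

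First, I would identify $E/a_{\lambda(d)}$ as a universally characterized object. Modified cofreeness tells us $E/a_{\lambda(d)}$ is $\mathcal F_{d}$-cofree. A direct check on geometric fixed points shows that $(E/a_{\lambda(d)})^{\Phi H}$ vanishes for $H\not\subseteq C_{d}$, since $a_{\lambda(d)}^{\Phi H}$ is an equivalence whenever $\lambda(d)^{H}=0$. Together these single out $E/a_{\lambda(d)}$ as the universal $\mathcal F_{d}$-cofree, $\mathcal F_{d}$-concentrated $C_{n}$-spectrum receiving a natural map from $E$, making the assignment $E\mapsto E/a_{\lambda(d)}$ into a well-defined functor of $C_{n}$-spectra.

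Second, I would show this functor preserves $\cO$-algebras. The essential input is the behavior of $\cO$-operadic norms on the Euler class: for every admissible pair $(K,H)$ in the indexing system of $\cO$,
\[
N_{K}^{H}(a_{\lambda(d)|_{K}})=a_{\mathrm{Ind}_{K}^{H}(\lambda(d)|_{K})},
\]
and Frobenius reciprocity guarantees that $\lambda(d)$ is a summand of $\mathrm{Ind}_{K}^{H}(\lambda(d)|_{K})$. Consequently every such norm is divisible by $a_{\lambda(d)}$, so the \emph{ideal} generated by $a_{\lambda(d)}$ is stable under all admissible norms of $\cO$. Combined with the $\mathcal F_{d}$-cofreeness, this algebraic divisibility lifts to a spectrum-level compatibility, producing a canonical $\cO$-algebra structure on $E/a_{\lambda(d)}$ for which $E\to E/a_{\lambda(d)}$ is a map of $\cO$-algebras. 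For the naturality statement with $d'\mid d$, the inclusion of families $\mathcal F_{d'}\subseteq\mathcal F_{d}$ induces a natural comparison of the corresponding universal constructions; applied to $E$, this yields $E/a_{\lambda(d)}\to E/a_{\lambda(d')}$ as a map of $\cO$-algebras.

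The main obstacle is lifting the Frobenius-reciprocity-based algebraic divisibility to a spectrum-level statement: in general the cofibre of an algebraic class is not a ring spectrum, and the norms only visibly preserve the ideal at the level of $\underline{\pi}_{\star}$. What rescues the situation is the modified cofreeness, which places $E/a_{\lambda(d)}$ in a subcategory where $\cO$-algebra structures are detected on geometric fixed points at subgroups in $\mathcal F_{d}$; it is precisely there that the divisibility argument applies and the operadic structure descends to the quotient.
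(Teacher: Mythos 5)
There is a genuine gap, and it sits exactly where you flag it: the passage from the homotopy-level observation that admissible norms carry the ideal generated by $a_{\lambda(d)}$ into itself to an actual, coherent $\cO$-algebra structure on the quotient. Being an $\cO$-algebra is structure, not a property, so it cannot be ``detected'' on (geometric) fixed points at subgroups in $\mathcal F_{d}$; what $\mathcal F_{d}$-cofreeness buys you is that \emph{weak equivalences} are detected on the subgroups in the family, not that a multiplication can be assembled or descended from fixed-point data. So the proposed rescue is not an argument. The universal property you invoke is also unjustified: it is true that $E/a_{\lambda(d)}$ is $\mathcal F_{d}$-cofree and that its geometric fixed points vanish at subgroups not contained in $C_{d}$, but it is not the universal spectrum under $E$ with these two properties --- for instance every $F\big(S(k\lambda(d))_{+},E\big)=E/a_{k\lambda(d)}$, $k\geq 1$, is also $\mathcal F_{d}$-cofree and concentrated over $\mathcal F_{d}$ and receives a map from $E$, and these are generally inequivalent with no initial or terminal object among them. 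Relatedly, the naturality step fails as stated: the inclusion $\mathcal F_{d'}\subseteq\mathcal F_{d}$ produces a map to $F\big(E\mathcal F_{d'+},E/a_{\lambda(d)}\big)$, which is not $E/a_{\lambda(d')}$ in general, so the comparison map of the theorem is not obtained from the family inclusion.

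The paper's proof is much more direct and avoids all of this. One identifies $E/a_{\lambda(d)}\simeq F\big(S(\lambda(d))_{+},E\big)$, so the quotient is the cotensor of $E$ by a $G$-space; the functor $F(X_{+},-)$ is lax monoidal via the diagonal of $X$, hence preserves algebras over any operad, giving the $\cO$-algebra structure with no appeal to cofreeness or to norms of Euler classes. The comparison maps for $d'\mid d$ are induced by the $G$-maps $S(\lambda(d'))\to S(\lambda(d))$, $z\mapsto z^{d/d'}$ (the same power maps that give the divisibilities $a_{\lambda(d')}\mid a_{\lambda(d)}$), and since these commute with diagonals the induced maps are maps of $\cO$-algebras. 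Cofreeness for the family $\mathcal F_{d}$ is only needed for the paper's stronger refinement, where $E/a_{\lambda(d)}$ is upgraded to an algebra over the localized operad $N_{H}^{G}\cO$ using the $\Ninfty$-operad localization results of Blumberg--Hill; if you want your norm-stability intuition to play a role, that is where it belongs, but even there the mechanism is operadic localization rather than divisibility of Euler classes on homotopy groups.
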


When we apply this to the spectra \(\MUG\), this gives a family of slice spectral sequences which interpolate between the one used in the solution of the Kervaire Invariant One problem and the one for the \(\Z\)-homotopy fixed points of \(\MUG\). These spectral sequences have fewer potential differentials and allow us to isolate particular computational aspects in the slice spectral sequence. This is described in Section~\ref{sec:FurtherComputations}.

\subsection*{Acknowledgements}
The first author thanks the organizers of the Young Topologists Meeting 2018 for the invitation to attend, and they also express their appreciation to Copenhagen University for its hospitality. Much of the mathematics in the paper was done during that week. The second author thanks the organizers of the Homotopy Harnessing Higher Structure workshop and Isaac Newton Institute for the invitation, where this manuscript is finished. The authors also thank Tyler Lawson, Andrew Blumberg, and Doug Ravenel for several helpful conversations.

\section{Representations and Euler classes}\label{sec:TheEulerClasses}
The irreducible representations of \(C_{n}\) over \(\R\) are parameterized by conjugate pairs of \(n\)th roots of unity.
\begin{notation}
Let \(\zeta=e^{2\pi i/n}\) be a fixed primitive \(n\)th root of unity.
\end{notation}

\begin{definition}
For each \(k\), let \(\lambda(k)\) denote the real representation of \(C_{n}=\langle\gamma\rangle\) given by complex numbers with 
\[
\gamma\cdot z=\zeta^{k}v.
\]
\end{definition}
The following are immediate consequences of the definition and elementary representation theory.
\begin{proposition}\mbox{}
\begin{enumerate}
\item For each \(k\), the identity gives an isomorphism \(\lambda(k+n)\cong \lambda(k)\).
\item For each \(k\), complex conjugation gives an isomorphism \(\lambda(k)\cong \lambda(-k)\).
\item If \(k=0\), then \(\lambda(k)\) is a sum of two copies of the trivial representation.
\item If \(n\) is odd, then for each \(k\not\equiv 0\mod n\), \(\lambda(k)\) is irreducible.
\item If \(n\) is even, then for each \(k\not\equiv n/2\mod n\), \(\lambda(k)\) is irreducible, and \(\lambda(n/2)\) is two copies of the sign representation of \(C_{n}\).
\end{enumerate}
\end{proposition}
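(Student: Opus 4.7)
The plan is to verify each of the five clauses directly from the definition, using only elementary facts about roots of unity and the classification of irreducible real representations of finite cyclic groups. None of the steps should require more than a few lines.

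For (1), I would simply observe that $\zeta^{k+n} = \zeta^{k}\cdot\zeta^{n} = \zeta^{k}$, so the action of $\gamma$ on $\C$ defining $\lambda(k+n)$ is literally the same as that defining $\lambda(k)$, giving the identity isomorphism. For (2), I would check that the $\R$-linear map $c\colon \C\to\C$, $z\mapsto\bar z$ intertwines the actions: if $\gamma$ acts on the source by $z\mapsto \zeta^{k}z$ and on the target by $z\mapsto \zeta^{-k}z$, then $c(\zeta^{k}z) = \overline{\zeta^{k}}\bar z = \zeta^{-k}c(z)$, since $\overline{\zeta^{k}} = \zeta^{-k}$. For (3), when $k=0$ the element $\gamma$ acts trivially on $\C$, so viewed as a two-dimensional real vector space this is the direct sum of two trivial representations.

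Parts (4) and (5) are the only substantive pieces, and both reduce to the same observation: a real two-dimensional representation of $C_n$ on which $\gamma$ acts by a complex number $\omega$ is reducible over $\R$ if and only if $\omega\in\{\pm 1\}$, because otherwise the $\R$-linear operator ``multiply by $\omega$'' has no real eigenvalues and hence no proper nonzero $\R$-invariant subspace. So the question becomes: for which $k$ is $\zeta^{k}\in\{\pm 1\}$? This is equivalent to $n\mid 2k$. If $n$ is odd then $\gcd(n,2)=1$ forces $n\mid k$, so $\lambda(k)$ is irreducible for $k\not\equiv 0\bmod n$, giving (4). If $n$ is even then $n\mid 2k$ holds iff $k\equiv 0$ or $k\equiv n/2\bmod n$; in the latter case $\zeta^{n/2}=-1$, so $\gamma$ acts on $\C$ by $-1$ and the underlying real representation is the sum of two copies of the sign representation, giving (5).

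The only potential subtlety is the irreducibility criterion used in (4) and (5); I would justify it in a single sentence by noting that an $\R$-invariant line $L\subset\C$ would be preserved by multiplication by $\omega=\zeta^{k}$, forcing $\omega$ to act on $L$ as a real scalar, and the only real scalars of modulus one are $\pm 1$. No step here is a genuine obstacle; the proposition is recorded essentially to fix notation for the representations $\lambda(k)$ used throughout the paper.
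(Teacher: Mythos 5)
Your proof is correct and is exactly the elementary verification the paper intends: the paper offers no argument at all, declaring the proposition an immediate consequence of the definition and elementary representation theory, and your direct checks for (1)--(3) together with the no-real-eigenvalue criterion for (4)--(5) supply precisely those routine details. As a small aside, your analysis of when $\zeta^{k}\in\{\pm 1\}$ also implicitly clarifies clause (5): the excluded case $k\equiv 0\bmod n$ is the one handled by clause (3) rather than by irreducibility.
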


A basic object in equivariant stable homotopy is the Euler class of a fixed-point free representation, and for \(k\not\equiv 0\mod n\), the representations \(\lambda(k)\) are all fixed point free.

\begin{definition}
If \(V\) is a representation such that \(V^{C_{n}}=\{0\}\), then let
\[
a_{V}\colon S^{0}\to S^{V}
\]
be the inclusion of the origin and the point at infinity.
\end{definition}

The classes \(a_{V}\) are all essential, but there are various interesting divisibility relations among them arising from our ability to use non-linear maps between spheres.

\begin{lemma}
For each \(k\) and \(\ell\), there is a class
\[
\tfrac{a_{\lambda(k\ell)}}{a_{\lambda(k)}}\colon S^{\lambda(k)}\to S^{\lambda(k\ell)}
\]
such that
\[
a_{\lambda(k\ell)}=\tfrac{a_{\lambda(k\ell)}}{a_{\lambda(k)}} a_{\lambda(k)}.
\]
\end{lemma}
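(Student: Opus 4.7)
The plan is to exhibit the class $\tfrac{a_{\lambda(k\ell)}}{a_{\lambda(k)}}$ explicitly by observing that raising to the $\ell$th power on $\mathbb{C}$ descends to a $C_{n}$-equivariant map from $S^{\lambda(k)}$ to $S^{\lambda(k\ell)}$, and that this map sends the two ``poles'' $\{0,\infty\}$ to $\{0,\infty\}$ in a way compatible with the Euler classes.

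Concretely, I would define $\tfrac{a_{\lambda(k\ell)}}{a_{\lambda(k)}}$ to be the one-point compactification of the (non-linear) map $p_{\ell}\colon \mathbb{C}\to \mathbb{C}$, $z\mapsto z^{\ell}$. The first step is to verify that this is $C_{n}$-equivariant when the source carries the $\lambda(k)$-action and the target carries the $\lambda(k\ell)$-action: if $\gamma$ denotes the chosen generator, then
\[
p_{\ell}(\gamma\cdot z)=(\zeta^{k}z)^{\ell}=\zeta^{k\ell}z^{\ell}=\gamma\cdot p_{\ell}(z).
\]
Since $p_{\ell}$ is continuous, proper, and sends $0$ to $0$, it extends continuously to a pointed $C_{n}$-equivariant map $S^{\lambda(k)}\to S^{\lambda(k\ell)}$ taking $\infty$ to $\infty$.

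The second step is to verify the asserted factorization. By definition, $a_{\lambda(k)}\colon S^{0}\to S^{\lambda(k)}$ is the pointed map sending the non-basepoint to $0\in\mathbb{C}\subset S^{\lambda(k)}$. Composing with the extended $p_{\ell}$ sends that non-basepoint to $p_{\ell}(0)=0\in\mathbb{C}\subset S^{\lambda(k\ell)}$, which is exactly the recipe defining $a_{\lambda(k\ell)}$. Thus
\[
\tfrac{a_{\lambda(k\ell)}}{a_{\lambda(k)}}\circ a_{\lambda(k)}=a_{\lambda(k\ell)}
\]
on the nose, not just up to homotopy.

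There is no real obstacle here; the only thing to watch is the edge case $\ell=0$, where $p_{0}$ is the constant map to $1$, which does not preserve $0$ — but then $\lambda(k\ell)=\lambda(0)$ is trivial, $a_{\lambda(0)}$ is null, and one may simply take the zero map, so the statement remains valid. Similarly, when $k\ell\equiv 0\pmod n$ the map $p_{\ell}$ is still equivariant because the target action is trivial, and the factorization still holds by the same point-set computation.
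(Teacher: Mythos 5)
Your proposal is correct and is exactly the paper's argument: the paper's entire proof is that the class is given by the $\ell$-power map $z\mapsto z^{\ell}$, and your verification of equivariance ($(\zeta^{k}z)^{\ell}=\zeta^{k\ell}z^{\ell}$) and of the on-the-nose factorization through the poles simply makes that one-line proof explicit. No gaps; the edge-case discussion for $\ell=0$ is a harmless extra.
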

\begin{proof}
The map is given by the \(\ell\)-power map \(z\mapsto z^{\ell}\).
\end{proof}

\begin{corollary}
If \(gcd(k,n)=gcd(\ell,n)\), then there are classes
\[
\tfrac{a_{\lambda(\ell)}}{a_{\lambda(k)}}\text{ and }\tfrac{a_{\lambda(k)}}{a_{\lambda(\ell)}}
\]
such that
\[
a_{\lambda(\ell)}=\tfrac{a_{\lambda(\ell)}}{a_{\lambda(k)}} a_{\lambda(k)}\text{ and }a_{\lambda(k)}=\tfrac{a_{\lambda(k)}}{a_{\lambda(\ell)}}a_{\lambda(\ell)}.
\]
\end{corollary}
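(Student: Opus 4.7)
The plan is to reduce directly to the previous lemma. The key observation is a small piece of number theory: if $d = \gcd(k,n) = \gcd(\ell,n)$, then writing $k = dk'$ and $\ell = d\ell'$, both $k'$ and $\ell'$ are units in $\Z/(n/d)$. Hence there exist integers $m, m' \in \Z$ with
\[
mk' \equiv \ell' \pmod{n/d} \quad \text{and} \quad m'\ell' \equiv k' \pmod{n/d}.
\]
Multiplying these congruences through by $d$ gives $mk \equiv \ell \pmod{n}$ and $m'\ell \equiv k \pmod{n}$.

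By item (1) of the preceding proposition, the identity map identifies $\lambda(mk)$ with $\lambda(\ell)$ and $\lambda(m'\ell)$ with $\lambda(k)$ as $C_n$-representations, so in particular the corresponding representation spheres are identified. Applying the previous lemma to the pair $(k, m)$ in place of $(k, \ell)$ produces a map
\[
\tfrac{a_{\lambda(mk)}}{a_{\lambda(k)}}\colon S^{\lambda(k)} \longrightarrow S^{\lambda(mk)} = S^{\lambda(\ell)}
\]
satisfying $a_{\lambda(\ell)} = a_{\lambda(mk)} = \tfrac{a_{\lambda(mk)}}{a_{\lambda(k)}} \cdot a_{\lambda(k)}$. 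Symmetrically, applying the lemma to $(\ell, m')$ yields
\[
\tfrac{a_{\lambda(m'\ell)}}{a_{\lambda(\ell)}}\colon S^{\lambda(\ell)} \longrightarrow S^{\lambda(m'\ell)} = S^{\lambda(k)}
\]
with $a_{\lambda(k)} = a_{\lambda(m'\ell)} = \tfrac{a_{\lambda(m'\ell)}}{a_{\lambda(\ell)}} \cdot a_{\lambda(\ell)}$. Setting
\[
\tfrac{a_{\lambda(\ell)}}{a_{\lambda(k)}} := \tfrac{a_{\lambda(mk)}}{a_{\lambda(k)}} \quad \text{and} \quad \tfrac{a_{\lambda(k)}}{a_{\lambda(\ell)}} := \tfrac{a_{\lambda(m'\ell)}}{a_{\lambda(\ell)}}
\]
gives the desired classes.

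There is no real obstacle; the only mild subtlety is that the notation $\tfrac{a_{\lambda(\ell)}}{a_{\lambda(k)}}$ is slightly abusive, since $m$ (hence the map) is only determined modulo $n/d$ and different representatives may produce genuinely different maps of spheres. The statement of the corollary asserts only existence, so this ambiguity is harmless; one should simply fix one choice of $m$ and $m'$ for the rest of the discussion.
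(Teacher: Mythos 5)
Your argument is correct and is exactly the intended deduction: the paper leaves this corollary as an immediate consequence of the preceding lemma, using precisely the observation that $\gcd(k,n)=\gcd(\ell,n)$ lets you pick $m$ with $mk\equiv\ell\pmod n$ (and symmetrically), identify $\lambda(mk)\cong\lambda(\ell)$, and apply the power-map lemma in both directions. Nothing to add.
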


\begin{remark}
The notation here is slightly misleading, since in general
\[
\tfrac{a_{\lambda(\ell)}}{a_{\lambda(k)}}\cdot\tfrac{a_{\lambda(k)}}{a_{\lambda(\ell)}}\neq 1.
\]
On fixed points, this is true, which is the previous corollary. On underlying homotopy, this is some map of non-trivial degree which is prime to \(n\). This failure to be an equivalence is detecting that \(\m{\pi}_{0}\big(S^{\lambda(\ell)-\lambda(k)}\big)\) is a non-trivial rank one projective Mackey functor.
\end{remark}
\begin{proposition}
Let \(\Sp^{C_{n}}_{(n)}\) denote the category of \(C_{n}\)-spectra in which all primes not dividing \(n\) are inverted, and assume that \(\gcd(\ell,n)=\gcd(k,n)\). Then the map \(\tfrac{a_{\lambda(\ell)}}{a_{\lambda(k)}}\) is a weak-equivalence.
\end{proposition}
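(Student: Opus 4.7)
The plan is to verify that the specified map $\tfrac{a_{\lambda(\ell)}}{a_{\lambda(k)}}\colon S^{\lambda(k)}\to S^{\lambda(\ell)}$ is a weak equivalence after inverting primes not dividing $n$ by checking it on the categorical fixed points for each subgroup $C_{d}\subset C_{n}$. The key observation is that, unwinding the proof of the preceding lemma, the map is literally the power map $z\mapsto z^{m}$ on the underlying complex plane, where $m$ is any integer chosen so that $km\equiv \ell\pmod n$. Such an $m$ exists because $\gcd(k,n)=\gcd(\ell,n)=g$, and it is determined modulo $n/g$.

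The first step is arithmetic: I would show that $m$ may be chosen with $\gcd(m,n)=1$. For each prime $p\mid n/g$, the congruence reduces to $k_{p}m\equiv \ell_{p}\pmod{p^{v_{p}(n/g)}}$ with $k_{p},\ell_{p}$ both coprime to $p$, so it forces $p\nmid m$ automatically. For each prime $p\mid g$ with $p\nmid n/g$, the residue of $m$ modulo $p$ is unconstrained, so by the Chinese Remainder Theorem I may adjust $m$ by a multiple of $n/g$ to arrange $p\nmid m$ for all such $p$ simultaneously. This is the one place where the hypothesis $\gcd(k,n)=\gcd(\ell,n)$ is used.

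With such an $m$ fixed, the fixed points are straightforward. Since $(S^{\lambda(j)})^{C_{d}}=S^{\lambda(j)^{C_{d}}}$ and $\lambda(j)^{C_{d}}$ is $\mathbb{C}$ when $d\mid j$ and $\{0\}$ otherwise, the pattern of fixed points on source and target agrees. For $d\nmid g$ both $(S^{\lambda(k)})^{C_{d}}$ and $(S^{\lambda(\ell)})^{C_{d}}$ are $S^{0}$ and $z\mapsto z^{m}$ restricts to the identity. For $d\mid g$ both are $S^{2}$ and the map restricts to a degree-$m$ self-map of $S^{2}$. After inverting all primes not dividing $n$, the chosen $m$ becomes a unit in the coefficient ring, so each degree-$m$ self-map of $S^{2}$ is a weak equivalence. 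Combining these cases shows that $\tfrac{a_{\lambda(\ell)}}{a_{\lambda(k)}}$ is a weak equivalence on all fixed-point spectra and therefore in $\Sp^{C_{n}}_{(n)}$.

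The main obstacle is the arithmetic step of choosing $m$ coprime to $n$; once that is in hand the rest reduces to a direct computation with representation spheres and the degrees of power maps.
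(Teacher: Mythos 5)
Your argument is correct in substance, and it supplies exactly the details the paper leaves implicit: the paper states this proposition without proof, with the preceding remark only noting that on underlying homotopy the class $\tfrac{a_{\lambda(\ell)}}{a_{\lambda(k)}}$ has degree prime to $n$. Your arithmetic step --- choosing $m$ with $km\equiv\ell\pmod n$ and $\gcd(m,n)=1$, which is possible precisely because $\gcd(k,n)=\gcd(\ell,n)$ --- is the real content, and your fixed-point bookkeeping ($S^{2}$ with a degree-$m$ map when $d\mid\gcd(k,n)$, $S^{0}$ with the identity otherwise) is right. One correction to the write-up: what you compute is not the categorical fixed points of these suspension spectra (by the tom Dieck splitting, $(\Sigma^{\infty}S^{\lambda(j)})^{C_{d}}$ is not $\Sigma^{\infty}S^{\lambda(j)^{C_{d}}}$), but the space-level, i.e.\ geometric, fixed points $\Phi^{C_{d}}$. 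The argument goes through verbatim once phrased that way: geometric fixed points for all subgroups jointly detect weak equivalences of genuine $C_{n}$-spectra, and this detection persists after inverting the primes not dividing $n$ since that localization is smashing and commutes with each $\Phi^{C_{d}}$. With that one change of wording, your proof is complete and is the argument the authors intend.
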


If we are working locally, we may therefore ignore any distinction between \(\lambda(k)\) and \(\lambda(\ell)\), provided \(\gcd(k,n)=\gcd(\ell,n)\).

\begin{corollary}
It suffices to consider \(a_{\lambda(p^{k})}\) for \(p\) a prime and \(p^{k}\) dividing \(n\), and in this case, \(a_{\lambda(p^{k-1})}\) divides \(a_{\lambda(p^{k})}\).
\end{corollary}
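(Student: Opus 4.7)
The plan is to deduce both halves as direct consequences of the preceding Lemma and Proposition, with the real work being conceptual organization rather than computation. For the first assertion, I would read ``it suffices to consider \(a_{\lambda(p^{k})}\)'' as saying that the prime-power Euler classes generate the divisibility poset of \(\{a_{\lambda(k)}\}\). By the Proposition, in \(\Sp^{C_{n}}_{(n)}\) the class \(a_{\lambda(k)}\) depends only on \(\gcd(k,n)\) up to weak equivalence, so the problem already reduces to the case \(k=d\) for some divisor \(d\) of \(n\).

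The next step is to reduce an arbitrary divisor \(d\mid n\) to the prime powers dividing it. Write \(d=p_{1}^{e_{1}}\cdots p_{r}^{e_{r}}\). For each index \(i\), the Lemma applied to the pair \(\bigl(k,\ell\bigr)=\bigl(p_{i}^{e_{i}},\,d/p_{i}^{e_{i}}\bigr)\) gives a factorization
\[
a_{\lambda(d)}=\tfrac{a_{\lambda(d)}}{a_{\lambda(p_{i}^{e_{i}})}}\cdot a_{\lambda(p_{i}^{e_{i}})},
\]
so \(a_{\lambda(d)}\) is a multiple of every prime-power Euler class \(a_{\lambda(p_{i}^{e_{i}})}\) with \(p_{i}^{e_{i}}\mid d\). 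Together with the previous paragraph, this shows that, up to weak equivalence in \(\Sp^{C_{n}}_{(n)}\), every \(a_{\lambda(k)}\) is dominated in the divisibility order by the prime-power classes \(a_{\lambda(p^{k})}\) with \(p^{k}\mid n\), which is the content of the first assertion.

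The second assertion is an immediate instance of the Lemma: substituting \(k\mapsto p^{k-1}\) and \(\ell\mapsto p\) (so that the product \(k\ell\) equals \(p^{k}\)) produces
\[
a_{\lambda(p^{k})}=\tfrac{a_{\lambda(p^{k})}}{a_{\lambda(p^{k-1})}}\cdot a_{\lambda(p^{k-1})},
\]
exhibiting the required divisibility inside each prime-power tower.

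There is no serious obstacle here; the only point requiring care is interpreting the word \emph{suffices}. The argument is not producing an equivalence \(a_{\lambda(d)}\simeq a_{\lambda(p^{k})}\) for composite \(d\), but rather identifying the prime-power Euler classes as the generators of the divisibility relations among the \(a_{\lambda(k)}\), once the Proposition has collapsed dependence of \(a_{\lambda(k)}\) on \(k\) down to \(\gcd(k,n)\). I would therefore phrase the proof so that these two ingredients---Proposition-based reduction to divisors of \(n\), and Lemma-based factorization through prime powers---are made explicit in that order.
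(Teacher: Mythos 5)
Your argument is correct and matches the paper's (implicit) reasoning: the corollary is stated there without proof precisely because it follows immediately from the preceding Proposition (dependence of \(a_{\lambda(k)}\) only on \(\gcd(k,n)\) after localizing) and the Lemma (the \(\ell\)-power factorization), which are exactly the two ingredients you invoke, including the correct specialization \(k\mapsto p^{k-1}\), \(\ell\mapsto p\) for the divisibility \(a_{\lambda(p^{k-1})}\mid a_{\lambda(p^{k})}\).
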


\section{Quotienting by \texorpdfstring{\(a_{\lambda(d)}\)}{Euler classes}}\label{sec:Quotientingas}

\subsection{Various forms of the quotients}
\begin{definition}
For any \(C_{n}\)-spectrum \(E\), let
\(E/a_{\lambda(k)}\)
denote the cofiber of
\[
\Sigma^{-\lambda(k)}E\xrightarrow{a_{\lambda(k)}} E.
\]
\end{definition}

\begin{remark}\label{rem:TrivialQuotient}
If \(n\vert k\), then \(a_{\lambda(k)}\) is the zero map, and the spectrum \(E/a_{\lambda(k)}\) is just \(E\vee \Sigma^{-1}E\).
\end{remark}

Since the spectra \(E/a_{\lambda(k)}\) are defined by a cofiber sequence, we have a natural long exact sequence given by maps into this.

\begin{proposition}\label{prop:LESHomotopy}
For any virtual representation \(V\) and for any \(E\), we have a natural long exact sequence
\[
\dots\to
	\m{\pi}_{V+\lambda(k)}(E)\to
	\m{\pi}_{V}(E)\to
	\m{\pi}_{V}\big(E/a_{\lambda(k)}\big)\to
	\m{\pi}_{V+\lambda(k)-1}(E)\to
	\dots.
\]
\end{proposition}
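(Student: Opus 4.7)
The statement is a direct consequence of the defining cofiber sequence together with the standard Puppe sequence machinery in a stable model category, so the plan is quite short and almost entirely a bookkeeping exercise.

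My plan is to take the defining cofiber sequence
\[
\Sigma^{-\lambda(k)} E \xrightarrow{a_{\lambda(k)}} E \to E/a_{\lambda(k)}
\]
and extend it, via the Puppe construction in the stable \(C_{n}\)-equivariant category, to the doubly infinite cofiber sequence
\[
\dots \to \Sigma^{-1}\bigl(E/a_{\lambda(k)}\bigr) \to \Sigma^{-\lambda(k)} E \xrightarrow{a_{\lambda(k)}} E \to E/a_{\lambda(k)} \to \Sigma^{1-\lambda(k)} E \to \dots.
\]
Then I apply the \(RO(C_{n})\)-graded mapping-in functor \([S^{V},-]^{C_{n}}_{\star}\) valued in Mackey functors (equivalently, take \(\m{\pi}_{V}\) of the whole sequence). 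Since cofiber sequences in a stable category induce long exact sequences on Mackey-functor-valued homotopy, the result is a long exact sequence whose consecutive terms are
\[
\m{\pi}_{V}\bigl(\Sigma^{-\lambda(k)} E\bigr), \quad \m{\pi}_{V}(E), \quad \m{\pi}_{V}\bigl(E/a_{\lambda(k)}\bigr), \quad \m{\pi}_{V}\bigl(\Sigma^{1-\lambda(k)} E\bigr),\ \dots.
\]

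The remaining step is to rewrite these using the \(RO(C_{n})\)-graded suspension isomorphism \(\m{\pi}_{V}(\Sigma^{W} X) \cong \m{\pi}_{V-W}(X)\), which identifies the first and fourth terms above with \(\m{\pi}_{V+\lambda(k)}(E)\) and \(\m{\pi}_{V+\lambda(k)-1}(E)\) respectively. Naturality of all the ingredients—cofibers, Puppe extension, suspension isomorphism—ensures the long exact sequence is natural in \(E\) and in \(V\).

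The only real thing to watch is the sign/indexing convention: because the connecting map lands in \(\Sigma^{1-\lambda(k)} E\) rather than \(\Sigma E\), the shift between consecutive \(V\)-indexed groups is by \(\lambda(k)-1\), not by \(1\). Apart from this cosmetic point there is no obstacle — the proposition is essentially the definition of \(E/a_{\lambda(k)}\) as a cofiber, unwound.
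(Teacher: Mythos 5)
Your proposal is correct and is exactly the argument the paper intends: the paper offers no separate proof, since the long exact sequence is just the Puppe sequence of the defining cofiber sequence \(\Sigma^{-\lambda(k)}E\to E\to E/a_{\lambda(k)}\) after applying \(\m{\pi}_{V}\) and using the \(RO(C_{n})\)-graded suspension isomorphism, with the indexing shift by \(\lambda(k)-1\) handled just as you describe.
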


In general, these spectra are much better behaved than we might have originally expected. This is most easily seen by reformulating slightly.

\begin{proposition}
We have a natural weak equivalence
\[
E/a_{\lambda(k)}\simeq F\big(S(\lambda(k))_{+},E).
\]
\end{proposition}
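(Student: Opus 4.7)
The plan is to identify $F(S(\lambda(k))_+, E)$ as the cofiber of $a_{\lambda(k)}$ via the standard cofiber sequence relating the unit sphere, unit disk, and representation sphere of $\lambda(k)$.

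First I would recall the basic equivariant cofiber sequence of pointed $C_n$-spaces
\[
S(\lambda(k))_+ \to D(\lambda(k))_+ \to S^{\lambda(k)},
\]
where $D(\lambda(k))$ is the unit disk in $\lambda(k)$ and $S^{\lambda(k)} = D(\lambda(k))/S(\lambda(k))$ is the one-point compactification. Since $D(\lambda(k))$ is equivariantly contractible, $D(\lambda(k))_+ \simeq S^0$, and under this identification the map $S^0 \to S^{\lambda(k)}$ is precisely $a_{\lambda(k)}$ (it sends the non-basepoint to the image of the origin and the basepoint to infinity).

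Next I would apply the contravariant functor $F(-, E)$ to this cofiber sequence. Since $F(-,E)$ turns cofiber sequences of spaces into fiber sequences of spectra, and fiber sequences of spectra are cofiber sequences, I get
\[
F(S^{\lambda(k)}, E) \xrightarrow{a_{\lambda(k)}^*} F(S^0, E) \to F(S(\lambda(k))_+, E).
\]
Rewriting the first two terms as $\Sigma^{-\lambda(k)} E$ and $E$, and noting that the induced map is multiplication by $a_{\lambda(k)}$, the third term is the cofiber of $a_{\lambda(k)}\colon \Sigma^{-\lambda(k)} E \to E$, which is exactly $E/a_{\lambda(k)}$ by definition.

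There is no real obstacle here; the only mild subtlety is being careful that the map $S^0 \to S^{\lambda(k)}$ coming from collapsing the unit sphere really is $a_{\lambda(k)}$ under the chosen identification, and that $F(-,E)$ is applied to an honest cofiber sequence of pointed $C_n$-spaces so that the resulting sequence of mapping spectra is a fiber (hence cofiber) sequence. Both are standard, so the proof is essentially a two-line application of the cofiber sequence above together with the definition of $E/a_{\lambda(k)}$.
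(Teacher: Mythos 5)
Your argument is correct and is exactly the paper's proof: the paper also deduces the statement immediately from the cofiber sequence $S(\lambda(k))_+ \to S^0 \to S^{\lambda(k)}$ (you have just spelled out the disk-collapse identification of $S^0$ and of $a_{\lambda(k)}$, and the step of applying $F(-,E)$, which the paper leaves implicit). No further comment needed.
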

\begin{proof}
This immediate from the cofiber sequence in pointed spaces
\[
S(\lambda(k))_{+}\to S^{0}\to S^{\lambda(k)}.\qedhere
\]
\end{proof}
\begin{remark}
This gives us a way to interpret the exact sequence in Proposition~\ref{prop:LESHomotopy}: this is an odd primary analogue of the long exact sequence which connects the restriction to an index \(2\) subgroup, multiplication by a corresponding \(a_{\sigma}\), and a signed transfer.
\end{remark}

\begin{corollary}\label{cor:Monoidal}
The functor \(E\mapsto E/a_{\lambda(k)}\) is a lax monoidal functor via the composite
\[
E/a_{\lambda(k)}\wedge E'/a_{\lambda(k)}\to F\big(S(\lambda(k))_{+}\wedge S(\lambda(k)_{+},E\wedge E'\big)\to F\big(S(\lambda(k))_{+},E\wedge E'\big),
\]
where the last map is induced by the diagonal.
\end{corollary}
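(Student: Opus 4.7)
The plan is to recognize this as an instance of a general abstract fact: for any cocommutative comonoid $C$ in pointed $G$-spaces, the functor $F(C,-)\colon \Sp^{C_{n}}\to \Sp^{C_{n}}$ is canonically lax symmetric monoidal. The composite in the statement is then just the unwinding of this general construction applied to $C=S(\lambda(k))_{+}$, using the previous proposition to identify $E/a_{\lambda(k)}$ with $F(S(\lambda(k))_{+},E)$.

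First I would observe that for any unpointed $G$-space $Y$, the diagonal $Y\to Y\times Y$ is $G$-equivariant and, after adding a disjoint basepoint, yields a map
\[
\Delta_{+}\colon Y_{+}\to (Y\times Y)_{+}\cong Y_{+}\wedge Y_{+},
\]
which together with the collapse $Y_{+}\to S^{0}$ exhibits $Y_{+}$ as a cocommutative, counital comonoid in pointed $G$-spaces. Applying this to $Y=S(\lambda(k))$ gives the comultiplication used in the statement.

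Next, I would assemble the lax monoidal structure map. Given a comonoid $C$, the functor $F(C,-)$ acquires a natural transformation
\[
F(C,E)\wedge F(C,E')\xrightarrow{\mu} F(C\wedge C, E\wedge E')\xrightarrow{\Delta^{*}} F(C,E\wedge E'),
\]
where $\mu$ is the universal lax monoidal structure on the internal mapping spectrum (adjoint to evaluation), and $\Delta^{*}$ is precomposition with the comultiplication. The unit is the composite $S^{0}\xrightarrow{\simeq} F(S^{0},S^{0})\to F(C,S^{0})$ using the counit $C\to S^{0}$. Translating this back through $F(S(\lambda(k))_{+},E)\simeq E/a_{\lambda(k)}$ produces exactly the composite displayed in the statement.

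The remaining work is purely formal: verify associativity, unitality, and (if desired) symmetry of this structure. Associativity follows from the coassociativity of $\Delta_{+}$ combined with the associativity of $\mu$; unitality follows from the counit property of the collapse map $S(\lambda(k))_{+}\to S^{0}$; symmetry follows from the cocommutativity of the diagonal. None of these checks present a real obstacle since they are instances of standard coherence for mapping spectra out of a comonoid, and the only nontrivial ingredient is the identification of $S(\lambda(k))_{+}$ as a cocommutative comonoid, which is automatic from the diagonal of the underlying $G$-space.
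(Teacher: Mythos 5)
Your proposal is correct and matches the paper's (implicit) argument: the paper derives this corollary directly from the identification $E/a_{\lambda(k)}\simeq F\big(S(\lambda(k))_{+},E\big)$, with the lax monoidal structure coming formally from the diagonal and collapse maps exhibiting $S(\lambda(k))_{+}$ as a cocommutative comonoid — exactly the cotensoring-over-$G$-spaces argument you give. The formal coherence checks you defer are likewise all the paper relies on, so there is no gap.
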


\begin{corollary}\label{cor:OperadicAlgebras}
The functor \(E\mapsto E/a_{\lambda(d)}\) preserves operadic algebras.
\end{corollary}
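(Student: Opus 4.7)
The plan is to promote the lax monoidal structure of Corollary~\ref{cor:Monoidal} on the functor $E \mapsto F(S(\lambda(d))_+, E) \simeq E/a_{\lambda(d)}$ to an equivariantly symmetric monoidal structure rich enough to transport algebra structure over an arbitrary $\Ninfty$-operad, and then to use naturality in the space variable to handle compatibility in $d$.

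First I would observe that the diagonal $\Delta\colon S(\lambda(d))_+ \to S(\lambda(d))_+ \wedge S(\lambda(d))_+$ used in Corollary~\ref{cor:Monoidal} is cocommutative; more generally, the iterated diagonal $S(\lambda(d))_+ \to S(\lambda(d))_+^{\wedge k}$ is simultaneously $C_n$-equivariant and $\Sigma_k$-equivariant, with $\Sigma_k$ acting trivially on the source and by permutation on the target. This upgrades Corollary~\ref{cor:Monoidal} to a lax symmetric monoidal structure. For the additional equivariant structure encoded by an $\Ninfty$-operad, I would use that for any admissible $H \subset C_n$ there is a canonical $C_n$-equivariant norm-diagonal
\[
S(\lambda(d))_+ \to N_H^{C_n} i_H^* S(\lambda(d))_+,
\]
arising from the natural map $X \to N_H^G i_H^* X$ available for any pointed $G$-space $X$. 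Together, the ordinary diagonal and the norm-diagonals exhibit $S(\lambda(d))_+$ as a comonoid object in a sufficiently strong equivariant sense that mapping out of it preserves algebras over any $\Ninfty$-operad, so $E/a_{\lambda(d)}$ inherits its $\cO$-algebra structure from $E$.

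For naturality in $d$: whenever $d' \mid d$, the $(d/d')$-th power map $z \mapsto z^{d/d'}$ defines a $C_n$-equivariant map $S(\lambda(d'))_+ \to S(\lambda(d))_+$. This map strictly intertwines the smash-product diagonal and the norm-diagonal on either side, so it is a map of comonoids in the appropriate sense. Functoriality of the mapping spectrum in the space variable then implies that the induced map $E/a_{\lambda(d)} \to E/a_{\lambda(d')}$ is a map of $\cO$-algebras.

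The main obstacle is the $\Ninfty$-specific part: lax symmetric monoidality immediately yields $E_\infty$-algebra preservation, but preserving the norms of an $\Ninfty$-operad requires unwinding the interaction of the norm-diagonal on $S(\lambda(d))_+$ with the norm-multiplication on $E$ through the mapping spectrum. This amounts to a diagram chase using the $(N_H^{C_n}, i_H^*)$ adjunction together with the fact that the power map is compatible with passage to restrictions and norms, which should go through cleanly given the elementary nature of the maps involved.
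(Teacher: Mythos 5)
Your route is essentially the paper's, just with more machinery than the paper uses: the paper disposes of this corollary in one line, remarking that it is simply the cotensoring of operadic algebras in $G$-spectra over $G$-spaces, i.e.\ for any operad $\cO$ in $G$-spaces and any $\cO$-algebra $E$, the cotensor $F(X_+,E)$ is again an $\cO$-algebra via the $G\times\Sigma_k$-equivariant iterated diagonals $X\to X^{\times k}$, applied here to $X=S(\lambda(d))$. Where you diverge is in treating the $\Ninfty$-norms as extra structure that must be transported separately; this leads you to introduce norm-diagonals $S(\lambda(d))_+\to N_H^{C_n}i_H^{*}S(\lambda(d))_+$ and to flag their interaction with the operadic norms as the ``main obstacle.'' That obstacle is illusory: an $\Ninfty$-operad is an operad in $G$-spaces, and an algebra over it in genuine $G$-spectra carries no norm data beyond the action maps $\cO(k)_+\wedge E^{\wedge k}\to E$ together with their $G\times\Sigma_k$-equivariance---the indexed norm multiplications are extracted from these by passing to graph-subgroup fixed points of $\cO(k)$. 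Hence the ordinary cotensoring argument (the symmetric refinement of Corollary~\ref{cor:Monoidal}) transports the entire $\cO$-algebra structure at once, and your norm-diagonals, which do exist since $N_H^{C_n}i_H^{*}(X_+)\cong\Map(C_n/H,X)_+$ and the constant-map diagonal is $C_n$-equivariant, are just the graph-subgroup shadow of the plain diagonal rather than independent data requiring verification. Your last paragraph on $d'\mid d$ is correct but goes beyond the statement; it corresponds to the power-map comparison of Proposition~\ref{prop:Tower}, whose multiplicative refinement the paper establishes separately via the locality theorem.
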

\begin{remark}
This is just the cotensoring of operadic algebras in \(G\)-spectra over \(G\)-spaces.
\end{remark}

\begin{corollary}\label{cor:Modules}
If \(R\) is a ring spectrum and \(M\) is an \(R\)-module, then \(M/a_{\lambda(d)}\) is a module over \(R/a_{\lambda(d)}\).
\end{corollary}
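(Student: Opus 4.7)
The plan is to obtain this as a direct consequence of the lax monoidal structure from Corollary~\ref{cor:Monoidal}. A general fact in monoidal category theory says that lax monoidal functors preserve modules: if $\Phi$ is lax monoidal, then for any ring $R$ and $R$-module $M$, the composite $\Phi R \wedge \Phi M \to \Phi(R\wedge M)\to \Phi M$ exhibits $\Phi M$ as a $\Phi R$-module. Applied to $\Phi(-)=F(S(\lambda(d))_{+},-)$, this is exactly the statement we want.

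First I would unpack the composite to write down the candidate action
\[
R/a_{\lambda(d)}\wedge M/a_{\lambda(d)}\to (R\wedge M)/a_{\lambda(d)}\xrightarrow{\mu/a_{\lambda(d)}}M/a_{\lambda(d)},
\]
where $\mu\colon R\wedge M\to M$ is the original action of $R$ on $M$ and the first map is the lax monoidal structure of Corollary~\ref{cor:Monoidal}. Next I would verify the module axioms. Associativity reduces, after unwinding definitions, to the coassociativity of the diagonal $S(\lambda(d))_{+}\to S(\lambda(d))_{+}\wedge S(\lambda(d))_{+}$ paired with the associativity of $\mu$; unitality reduces similarly to the counitality of the diagonal together with the unitality of $\mu$.

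The main obstacle is essentially bookkeeping: confirming that the coherence diagrams for the lax monoidal functor align with the module diagrams for $M$ over $R$. Because the lax monoidal structure is induced from a single natural diagonal on the space $S(\lambda(d))_{+}$ — rather than some more elaborate construction — all of these diagrams commute automatically, and no genuine computation is needed. The same argument in fact gives the stronger statement that the whole module category over $R$ is carried to the module category over $R/a_{\lambda(d)}$, functorially in $M$.
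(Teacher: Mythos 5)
Your argument is exactly the paper's (implicit) one: the identification $E/a_{\lambda(d)}\simeq F\big(S(\lambda(d))_{+},E\big)$ makes the quotient a lax monoidal functor via the diagonal on $S(\lambda(d))_{+}$ (Corollary~\ref{cor:Monoidal}), and lax monoidal functors carry $R$-modules to $\Phi R$-modules, which is how the paper deduces Corollary~\ref{cor:Modules} without further comment. Your verification that the module axioms reduce to coassociativity and counitality of the diagonal together with the original module structure is the correct and complete justification.
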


\begin{corollary}
Let \(d=\gcd(k,n)\). Then for all \(\ell\) such that \(d\vert \ell\), multiplication by \(a_{\lambda(\ell)}\) is zero in the \(RO(G)\)-graded homotopy of \(E/a_{\lambda(k)}\).
\end{corollary}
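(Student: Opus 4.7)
The plan is a clean two-step reduction using the divisibility relations from Section~\ref{sec:TheEulerClasses}. Rather than analyze $a_{\lambda(\ell)}$ directly for general $\ell$ with $d \mid \ell$, I would first establish that the single class $a_{\lambda(d)}$ acts as zero on $\m\pi_\star(E/a_{\lambda(k)})$, and then bootstrap from that.

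\textbf{Step 1: $a_{\lambda(d)}$ is zero on $E/a_{\lambda(k)}$.} Since $d \mid n$, we have $\gcd(d,n) = d = \gcd(k,n)$. Thus the Corollary following the Lemma in Section~\ref{sec:TheEulerClasses} provides a class
\[
\tfrac{a_{\lambda(d)}}{a_{\lambda(k)}}\colon S^{\lambda(k)}\to S^{\lambda(d)}
\]
with $a_{\lambda(d)} = \tfrac{a_{\lambda(d)}}{a_{\lambda(k)}}\cdot a_{\lambda(k)}$. Smashing this factorization with $E/a_{\lambda(k)}$, the self-map given by multiplication by $a_{\lambda(d)}$ factors through the self-map given by multiplication by $a_{\lambda(k)}$. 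By construction of the cofiber, $a_{\lambda(k)}\wedge \mathrm{id}_{E/a_{\lambda(k)}}$ is null. Hence $a_{\lambda(d)}$ acts as zero on the $RO(C_n)$-graded homotopy of $E/a_{\lambda(k)}$.

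\textbf{Step 2: Extension to arbitrary $\ell$ with $d \mid \ell$.} Write $\ell = d m$. Applying the Lemma with $(k,\ell)$ replaced by $(d,m)$ gives
\[
a_{\lambda(\ell)} \;=\; \tfrac{a_{\lambda(\ell)}}{a_{\lambda(d)}}\cdot a_{\lambda(d)}.
\]
So multiplication by $a_{\lambda(\ell)}$ on $\m\pi_\star(E/a_{\lambda(k)})$ factors through multiplication by $a_{\lambda(d)}$, which is zero by Step~1. This completes the proof.

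\textbf{Main obstacle.} There is essentially no obstacle: the entire content of the corollary is a combinatorial packaging of the two factorization statements already established in the previous section. The only subtlety worth flagging is the direction of the factorization---one must reduce through the \emph{smaller} Euler class $a_{\lambda(d)}$ (which is divisible by $a_{\lambda(k)}$ because $\gcd$s agree) and then use the Lemma to see that every $a_{\lambda(\ell)}$ with $d \mid \ell$ is in turn divisible by $a_{\lambda(d)}$. Note that the hypothesis $d \mid \ell$ is used precisely to apply the Lemma; without it, one would need to invoke the more delicate divisibility via $\gcd(\ell,n)$, which in general is not divisible by $d$ in the sphere spectrum without further hypotheses.
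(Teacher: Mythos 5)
Your divisibility bookkeeping (reduce through $a_{\lambda(d)}$ using $\gcd(d,n)=\gcd(k,n)$, then through $a_{\lambda(\ell)}=\tfrac{a_{\lambda(\ell)}}{a_{\lambda(d)}}a_{\lambda(d)}$) is exactly the reduction the paper makes, but the load-bearing claim in your Step 1 --- ``by construction of the cofiber, $a_{\lambda(k)}\wedge\mathrm{id}_{E/a_{\lambda(k)}}$ is null'' --- is not a consequence of the cofiber construction, and this is a genuine gap rather than a formality. In general the cofiber of multiplication by a homotopy element is \emph{not} annihilated by that element: the mod $2$ Moore spectrum $S^{0}/2$ is the cofiber of multiplication by $2$, yet $2\cdot\mathrm{id}_{S^{0}/2}\neq 0$, and multiplication by $2$ is even nonzero on $\pi_{2}(S^{0}/2)\cong\Z/4$. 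What the long exact sequence of Proposition~\ref{prop:LESHomotopy} gives you for free is only that the \emph{image} of $a_{\lambda(k)}$ (hence of any multiple of it) in $\m{\pi}_{\star}(S^{0}/a_{\lambda(k)})$ vanishes; passing from ``the element maps to zero'' to ``multiplication by the element is zero on the quotient'' requires extra structure, and your ``main obstacle'' paragraph dismisses precisely the point where that structure is needed.

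The paper supplies the missing input multiplicatively: by Corollary~\ref{cor:Modules} one reduces to $E=S^{0}$, and $S^{0}/a_{\lambda(k)}\simeq F\big(S(\lambda(k))_{+},S^{0}\big)$ is a ring via the diagonal (Corollary~\ref{cor:Monoidal}), so the action of $a_{\lambda(\ell)}\in\pi_{\star}S^{0}$ on any module over it factors through the image of $a_{\lambda(\ell)}$ in $\pi_{\star}(S^{0}/a_{\lambda(k)})$, which is zero by your two divisibility steps together with the long exact sequence. Alternatively, you can repair Step 1 directly and geometrically: an element of $\m{\pi}_{V}\big(E/a_{\lambda(k)}\big)$ is adjoint to a map $S^{V}\wedge S(\lambda(k))_{+}\to E$, and multiplication by $a_{\lambda(k)}$ precomposes with $a_{\lambda(k)}\wedge\mathrm{id}_{S(\lambda(k))_{+}}$, which is equivariantly null-homotopic via $H_{t}(x)=(tx,x)$, $t\in[0,\infty]$ (push the origin to infinity along the tautological direction); this uses the identification $E/a_{\lambda(k)}\simeq F\big(S(\lambda(k))_{+},E\big)$ rather than the bare cofiber. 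With either fix your outline becomes a complete proof; without one, the key step is unjustified.
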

\begin{proof}
By Corollary~\ref{cor:Modules}, it suffices to show this for \(E=S^{0}\). By construction, the image of \(a_{\lambda(\ell)}\) in \(\pi_{\star}S^{0}/a_{\lambda(d)}\) is zero, and since this is a ring, we deduce that multiplication by \(a_{\lambda(\ell)}\) is always zero.
\end{proof}

The divisibilities between the \(a_{\lambda(k)}\) produce a tower of spectra under \(E\).

\begin{proposition}\label{prop:Tower}
If \(k\vert\ell\), then there is a natural map of \(C_{n}\)-spectra under \(E\)
\[
E/a_{\lambda(\ell)}\to E/a_{\lambda(k)}.
\]
\end{proposition}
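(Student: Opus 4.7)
The plan is to construct the map directly using the factorization of Euler classes given by the Lemma on the $\ell$-power map. Write $\ell = km$ for some integer $m$. The Lemma (applied with its roles of $k$ and $\ell$ renamed) produces a class
\[
\tfrac{a_{\lambda(\ell)}}{a_{\lambda(k)}}\colon S^{\lambda(k)}\to S^{\lambda(\ell)}
\]
(the $m$-th power map on the underlying circles) satisfying $a_{\lambda(\ell)} = \tfrac{a_{\lambda(\ell)}}{a_{\lambda(k)}}\cdot a_{\lambda(k)}$ in $\pi_\star^{C_n}S^{0}$.

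Smashing this factorization with $\Sigma^{-\lambda(\ell)-\lambda(k)}E$ and rearranging suspensions produces a commuting square of $C_n$-spectra
\[
\begin{array}{ccc}
\Sigma^{-\lambda(\ell)}E & \xrightarrow{a_{\lambda(\ell)}} & E \\
\big\downarrow\phi & & \big\Vert \\
\Sigma^{-\lambda(k)}E & \xrightarrow{a_{\lambda(k)}} & E,
\end{array}
\]
where $\phi$ is $\tfrac{a_{\lambda(\ell)}}{a_{\lambda(k)}}$ smashed with an appropriate suspension of $\id_E$. This is a morphism in the arrow category, so applying the cofiber functor yields a map $E/a_{\lambda(\ell)}\to E/a_{\lambda(k)}$. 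Because the right-hand arrow of the square is the identity on $E$, this map of cofibers sits under $E$, as required. Naturality in $E$ is automatic, since $\phi$ is obtained by smashing a map of spheres with $E$.

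Alternatively, and perhaps more conceptually, one can invoke the identification $E/a_{\lambda(j)}\simeq F(S(\lambda(j))_+,E)$ established in Section~3. The $m$-th power map $z\mapsto z^m$ is $C_n$-equivariant from $S(\lambda(k))$ (where $\gamma$ acts by $\zeta^k$) to $S(\lambda(\ell))$ (where $\gamma$ acts by $\zeta^\ell=\zeta^{km}$), since $(\zeta^k z)^m=\zeta^{\ell}z^m$. Applying $F(-_+,E)$ reverses the direction and produces the desired map $E/a_{\lambda(\ell)}\to E/a_{\lambda(k)}$; the compatibility with the unit maps $E\to F(X_+,E)$ induced by the collapses $X_+\to S^0$ gives the ``under $E$'' statement for free.

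No serious obstacle arises: the statement is essentially formal once one has the divisibility lemma in hand. The only minor bookkeeping is tracking suspensions by virtual representations in the first approach, or verifying $C_n$-equivariance of the power map in the second; both are routine.
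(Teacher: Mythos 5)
Your proposal is correct, and your second, ``more conceptual'' argument is precisely the paper's proof: the paper writes down the map of cofiber sequences from $S(\lambda(k))_{+}\to S^{0}\xrightarrow{a_{\lambda(k)}}S^{\lambda(k)}$ to $S(\lambda(\ell))_{+}\to S^{0}\xrightarrow{a_{\lambda(\ell)}}S^{\lambda(\ell)}$ given by the power maps $z\mapsto z^{\ell/k}$ and then maps out into $E$, which simultaneously yields the map $E/a_{\lambda(\ell)}\to E/a_{\lambda(k)}$ and its compatibility under $E$. Your first argument is a standard variant of the same idea (its only wrinkle being that a square commuting merely up to homotopy does not canonically induce a map of cofibers, a point your space-level second argument avoids).
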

\begin{proof}
If \(k\vert\ell\), then we have a map of cofiber sequences
\[
\begin{tikzcd}
S(\lambda(k))_{+}
	\ar[r]
	\ar[d, "z\mapsto z^{\ell/k}"']
	&
S^{0}
	\ar[r, "a_{\lambda(k)}"]
	\ar[d]
	&
S^{\lambda(k)}
	\ar[d, "a_{\lambda(\ell)}/a_{\lambda(k)}"]
	\\
S(\lambda(\ell))_{+}
	\ar[r]
	&
S^{0}
	\ar[r, "a_{\lambda(\ell)}"']
	&
S^{\lambda(\ell)}.
\end{tikzcd}
\]
Mapping out of this gives the desired result, together with the compatible maps from \(E\).
\end{proof}

\begin{corollary}
The assignment
\[
d\mapsto E/a_{\lambda(d)}
\]
extends naturally to a contravariant functor from the poset of divisors of \(n\) to \(C_{n}\)-spectra.
\end{corollary}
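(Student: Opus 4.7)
The plan is to upgrade Proposition~\ref{prop:Tower} from giving individual morphisms to giving a functor, which amounts to checking identity and composition axioms on the morphisms $E/a_{\lambda(\ell)} \to E/a_{\lambda(k)}$ associated to divisibility relations $k \mid \ell$. The divisor poset of $n$ has $d \le d'$ iff $d \mid d'$, so a contravariant functor must send a divisibility $d \mid d'$ to a map $E/a_{\lambda(d')} \to E/a_{\lambda(d)}$; this is precisely what Proposition~\ref{prop:Tower} produces. So the assignment on objects and morphisms is clear, and the only content is functoriality.

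First, I would verify the identity axiom. When $k = \ell$, the vertical arrows in the diagram used in the proof of Proposition~\ref{prop:Tower} are induced by the power map $z \mapsto z^{\ell/k} = z$, which is the identity on $S(\lambda(k))_+$, so after applying $F(-,E)$ we obtain the identity map on $E/a_{\lambda(k)} \simeq F(S(\lambda(k))_+, E)$.

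Next, I would verify composition. Given divisors $k \mid \ell \mid m$ of $n$, I need to show that the composite
\[
E/a_{\lambda(m)} \to E/a_{\lambda(\ell)} \to E/a_{\lambda(k)}
\]
agrees with the map $E/a_{\lambda(m)} \to E/a_{\lambda(k)}$ produced directly by Proposition~\ref{prop:Tower}. Under the equivalence $E/a_{\lambda(j)} \simeq F(S(\lambda(j))_+, E)$, each of these maps is obtained by applying $F(-, E)$ to a power map between unit circles. The composition of power maps satisfies $(z \mapsto z^{\ell/k}) \circ (z \mapsto z^{m/\ell}) = (z \mapsto z^{m/k})$ as $C_n$-equivariant maps $S(\lambda(m)) \to S(\lambda(k))$, since $(m/\ell) \cdot (\ell/k) = m/k$. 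Applying the contravariant functor $F(-,E)$ then gives equality of the two composites.

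There is essentially no obstacle here: the construction in Proposition~\ref{prop:Tower} is natural in the power-map structure, and functoriality is inherited from the evident functoriality of the assignment $j \mapsto S(\lambda(j))_+$ with power maps as morphisms. The only mild care needed is to make a specific choice of the representatives $a_{\lambda(\ell)}/a_{\lambda(k)}$ consistent with composition, which is accomplished by defining them universally via the power maps $z \mapsto z^{\ell/k}$ as in the proof of Proposition~\ref{prop:Tower}; with this uniform choice, associativity is automatic.
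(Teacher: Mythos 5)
Your argument is correct and is precisely the (implicit) content behind the paper's corollary, which is stated as an immediate consequence of Proposition~\ref{prop:Tower}: the maps are induced by the power maps $z\mapsto z^{\ell/k}\colon S(\lambda(k))\to S(\lambda(\ell))$, these compose by the exponent identity $(m/\ell)(\ell/k)=m/k$ and are the identity when $k=\ell$, and contravariance of $F(-,E)$ then gives the functor. One small slip in direction: the space-level composite is $(z\mapsto z^{m/\ell})\circ(z\mapsto z^{\ell/k})=(z\mapsto z^{m/k})$ as $C_n$-maps $S(\lambda(k))\to S(\lambda(m))$ (the power maps go toward the larger divisor; the arrows only reverse after applying $F(-,E)$), not $S(\lambda(m))\to S(\lambda(k))$ as you wrote.
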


This has an equivariant algebraic geometry interpretation. Much of the recent work in equivariant algebraic geometry begins with the prime spectrum of the Burnside ring \(\Spec(A)\) of finite \(G\)-sets. There are distinguished prime ideals in the Burnside ring for \(A\), namely the ideals \(I_{\mathcal F}\)  generated by those finite \(G\)-sets such that are stabilizer subgroups are in some fixed family of subgroups \(\mathcal F\). These are obviously nested in the same way the corresponding families are nested. To each non-trivial, irreducible representation \(V\), we have a family of subgroups
\[
\mathcal F_{V}=\{H\mid V^{H}\neq\{0\}\},
\]
and the effect on \(\pi_{0}\) of the localization map \(S^{0}\to S^{0}[a_{V}^{-1}]\) is the reduction modulo \(I_{\mathcal F}\) (see, for example, \cite{CDMProof}). The quotient \(S^{0}/a_{V}\) is then instead isolating the ideal \(I_{\mathcal F}\). These are those pieces built out of cells for which the stabilizer is in \(\mathcal F\), and the functor above should be thought of as carving down to smaller and smaller ideals.

\subsection{A genuine \texorpdfstring{\(C_{n}\)}{Cn}-spectrum computing \texorpdfstring{\(\Z\)}{Z}-homotopy fixed points}
Our initial interest in the spectra \(R/a_{\lambda(k)}\) came from thinking about \(\Z\)-homotopy fixed points of a \(C_{n}\)-spectrum We begin with an important, well-known observation.
\begin{proposition}
The space \(B\Z\simeq S^{1}\) inherits a \(C_{n}\) action from the fiber sequence
\[
B\Z\to B\Z\to BC_{n}.
\]
With this action, it is \(C_{n}\) homotopy equivalent to \(S(\lambda)\).
\end{proposition}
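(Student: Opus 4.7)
The plan is to identify the fibration explicitly and then exhibit a $C_n$-equivariant homeomorphism to $S(\lambda)$, which is in particular a $C_n$-homotopy equivalence.

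First, I would realize the fibration as coming from the short exact sequence of abelian groups
\[
0 \to n\mathbb{Z} \xrightarrow{\iota} \mathbb{Z} \xrightarrow{\pi} C_{n} \to 0.
\]
Applying the classifying space functor produces a fiber sequence $B(n\mathbb{Z}) \to B\mathbb{Z} \to BC_{n}$, in which the fiber $B(n\mathbb{Z}) \cong B\mathbb{Z}$ inherits an action of $\Omega BC_{n} \simeq C_{n}$ in the standard way (this is the monodromy action on the fiber of a principal fibration, or equivalently the deck transformation action of the covering).

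Next I would make the geometry explicit. Take the concrete models $B\mathbb{Z} = \mathbb{R}/\mathbb{Z}$ and $B(n\mathbb{Z}) = \mathbb{R}/n\mathbb{Z}$, so that the map $\pi$ corresponds to the $n$-fold connected covering $\mathbb{R}/n\mathbb{Z} \to \mathbb{R}/\mathbb{Z}$, $[t] \mapsto [t]$. The deck transformation group of this cover is precisely $\mathbb{Z}/n\mathbb{Z} = C_{n}$, acting on $\mathbb{R}/n\mathbb{Z}$ by translation: a chosen generator $\gamma \in C_{n}$ acts by $[t] \mapsto [t+1]$.

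To compare this with $S(\lambda) = S(\lambda(1))$, I would use the diffeomorphism
\[
\varphi\colon \mathbb{R}/n\mathbb{Z} \xrightarrow{\ \cong\ } S(\lambda), \qquad [t] \mapsto e^{2\pi i t/n}.
\]
Under $\varphi$, the deck action $[t] \mapsto [t+1]$ corresponds to multiplication by $e^{2\pi i/n} = \zeta$, which by definition is the $C_{n}$-action on $S(\lambda(1))$. Hence $\varphi$ is $C_{n}$-equivariant, and in particular a $C_{n}$-homotopy equivalence.

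There is no real obstacle here; the content is simply to check that the standard models line up, with the one place to be careful being the normalization in the identification $\mathbb{R}/n\mathbb{Z} \cong S^{1}$, which is what ensures that translation by $1$ on the left corresponds to rotation by $2\pi/n$ (rather than some other rotation) on the right.
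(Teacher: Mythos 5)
Your argument is correct: the paper states this proposition without proof, as a well-known observation, and your covering-space argument is exactly the standard justification — identifying the homotopy fiber of \(B\Z\to BC_{n}\) with the pullback of \(EC_{n}\), i.e.\ the connected \(n\)-fold cover \(\R/n\Z\to\R/\Z\) with \(C_{n}\) acting by deck transformations, and then matching that action with rotation by \(\zeta\) on \(S(\lambda)\). The only normalization worry (whether the chosen generator ends up acting by \(\zeta\) or \(\zeta^{-1}\)) is immaterial, since complex conjugation gives an equivariant isomorphism \(\lambda(1)\cong\lambda(-1)\), as noted earlier in the paper.
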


\begin{definition}
If \(E\) is a \(C_{n}\)-spectrum, then let
\[
E^{hn\Z}=F\big(S(\lambda)_{+},E\big).
\]
\end{definition}

\begin{proposition}
For any \(C_{n}\)-spectrum \(E\), we have a natural equivalence
\[
E^{hn\Z}\simeq E/a_{\lambda(1)}.
\]
\end{proposition}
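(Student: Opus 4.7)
The plan is to simply chain the two identifications already established in the excerpt. By the preceding definition, $E^{hn\Z} := F\bigl(S(\lambda)_{+}, E\bigr)$, and one observes that the unindexed representation $\lambda$ is, by the conventions of Section~\ref{sec:TheEulerClasses}, the faithful irreducible $\lambda(1)$. So the first step is to note that as $C_{n}$-spaces, $S(\lambda) = S(\lambda(1))$, and hence $E^{hn\Z} = F\bigl(S(\lambda(1))_{+}, E\bigr)$ on the nose.

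The second step is to invoke the earlier proposition which identifies
\[
E/a_{\lambda(k)} \simeq F\bigl(S(\lambda(k))_{+}, E\bigr)
\]
for every $k$, via the cofiber sequence of based $C_{n}$-spaces
\[
S(\lambda(k))_{+} \to S^{0} \xrightarrow{a_{\lambda(k)}} S^{\lambda(k)}
\]
to which we apply $F(-,E)$. Specializing $k=1$ yields $E/a_{\lambda(1)} \simeq F\bigl(S(\lambda(1))_{+}, E\bigr)$. Combining the two equivalences gives the desired natural equivalence $E^{hn\Z} \simeq E/a_{\lambda(1)}$.

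There is essentially no obstacle here: the content of the proposition is entirely packaged in the preceding proposition plus the identification of $B\Z$ (with its $C_{n}$-action from $B\Z \to B\Z \to BC_{n}$) with $S(\lambda)$, which was recorded immediately before. The only thing worth flagging is the naturality claim, but since both sides are obtained by applying the representable functor $F(-,E)$ to fixed $C_{n}$-spaces, naturality in $E$ is automatic.
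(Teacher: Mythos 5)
Your argument is correct and is exactly the reasoning the paper intends: the proposition follows immediately from the definition $E^{hn\Z}=F(S(\lambda)_{+},E)$ with $\lambda=\lambda(1)$ together with the earlier identification $E/a_{\lambda(k)}\simeq F(S(\lambda(k))_{+},E)$ coming from the cofiber sequence $S(\lambda(k))_{+}\to S^{0}\to S^{\lambda(k)}$. The paper states it without further proof for precisely this reason, and your remark that naturality in $E$ is automatic is the right observation.
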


%
The spectrum \(E^{hn\Z}\) is the \(n\Z\)-homotopy fixed points of \(E\), viewed as a \(\Z\)-spectrum via the quotient map \(\Z\to C_{n}\) given by a choice of generator.

\begin{theorem}
For any \(C_{n}\)-spectrum \(E\), we have a natural weak equivalence
\[
\big(E^{hn\Z}\big)^{C_{n}}\simeq \big(E^{hn\Z}\big)^{hC_{n}}\simeq E^{h\Z},
\]
where the final spectrum is the ordinary \(\Z\)-homotopy fixed points.
\end{theorem}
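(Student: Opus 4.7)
The plan is to split the chain of equivalences into two assertions and prove each separately. First, I would show that $E^{hn\Z}$ is cofree as a genuine $C_n$-spectrum, which immediately yields $(E^{hn\Z})^{C_n}\simeq(E^{hn\Z})^{hC_n}$. Second, I would identify $(E^{hn\Z})^{C_n}$ with $E^{h\Z}$ via the free orbit--fixed-point adjunction along the quotient $\pi\colon\Z\twoheadrightarrow C_n$.

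For cofreeness, one verifies that the natural map $E^{hn\Z}\to F(EC_{n+},E^{hn\Z})$ is a weak equivalence. Since $E^{hn\Z}=F(S(\lambda)_+,E)$, by the adjunction between smash and mapping spectra it suffices to show that the projection $EC_n\times S(\lambda)\to S(\lambda)$ is a $C_n$-equivalence. Non-equivariantly, $EC_n$ is contractible, so the projection is an ordinary homotopy equivalence. For any nontrivial subgroup $H\le C_n$, the rotation action of $C_n$ on $S(\lambda)$ is free, hence $S(\lambda)^H=\emptyset$ and consequently $(EC_n\times S(\lambda))^H=\emptyset$ as well. Thus the projection is an $H$-equivalence for every subgroup, and applying $F((-)_+,E)$ yields the cofreeness equivalence. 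Taking $C_n$-fixed points of this equivalence produces $(E^{hn\Z})^{C_n}\simeq(E^{hn\Z})^{hC_n}$.

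For the identification with $E^{h\Z}$, take $\R$ as the model for $E\Z$, so that $S(\lambda)\simeq\R/n\Z$ as $C_n$-spaces. Because the $\Z$-action on $E$ factors through $\pi$, the subgroup $n\Z\subset\Z$ acts trivially on $E$. A telescoping fixed-point computation then gives
\[
E^{h\Z}=F(\R_+,E)^{\Z}\cong\bigl(F(\R_+,E)^{n\Z}\bigr)^{C_n}\simeq F((\R/n\Z)_+,E)^{C_n}\simeq F(S(\lambda)_+,E)^{C_n}=(E^{hn\Z})^{C_n}.
\]
The crucial middle step $F(\R_+,E)^{n\Z}\simeq F((\R/n\Z)_+,E)$ is the free orbit--fixed-point adjunction: with $n\Z$ acting freely on $\R$ and trivially on $E$, an $n\Z$-equivariant map $\R_+\to E$ is the same datum as a map $(\R/n\Z)_+\to E$.

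The main obstacle is the bookkeeping between naive and genuine equivariance. The spectrum $E^{h\Z}$ is an inherently naive construction, since there is no genuine $\Z$-equivariant stable homotopy theory, whereas $(E^{hn\Z})^{C_n}$ is a genuine fixed-point spectrum. Cofreeness is precisely what bridges these two worlds, guaranteeing agreement with the $C_n$-homotopy fixed points of the cofree spectrum. The $n\Z$-fixed-point step in the display above must be interpreted carefully at the spectrum level, using the trivial $n\Z$-action on $E$ to reduce an equivariant mapping spectrum out of a free $n\Z$-space to an ordinary mapping spectrum out of its quotient.
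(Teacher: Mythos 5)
Your proof is correct, and the first half (cofreeness) is essentially the paper's own argument: the paper also deduces the equivalence $E^{hn\Z}\to F(EC_{n+},E^{hn\Z})$ from the fact that $C_{n}$ acts freely on $S(\lambda)\simeq B\Z$, so that $EC_{n}\times S(\lambda)\to S(\lambda)$ is an equivariant equivalence; you just spell out the fixed-point check. The second half is where you genuinely diverge. The paper stays on the homotopy-fixed-point side: it invokes the iterated homotopy fixed point equivalence $E^{h\Z}\simeq\big(E^{hn\Z}\big)^{hC_{n}}$ attached to the extension $n\Z\to\Z\to C_{n}$, uses the triviality of the $n\Z$-action to identify the naive $n\Z$-homotopy fixed points with $F(S(\lambda)_{+},E)$, and only then uses cofreeness to replace homotopy fixed points by genuine fixed points. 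You instead land directly on the genuine-fixed-point side: choosing $\R$ as a model for $E\Z$, the point-set telescoping $F(\R_{+},E)^{\Z}\cong\big(F(\R_{+},E)^{n\Z}\big)^{C_{n}}\cong F(S(\lambda)_{+},E)^{C_{n}}$ identifies $E^{h\Z}$ with $\big(E^{hn\Z}\big)^{C_{n}}$ without ever invoking the descent equivalence for the extension, with cofreeness then supplying the remaining comparison to $\big(E^{hn\Z}\big)^{hC_{n}}$. Your route is more elementary and explicit, at the cost of the model-category bookkeeping you flag: one needs $E$ fibrant so that equivariant maps out of the free $\Z$-CW complex $\R$ compute derived maps and so that the categorical $C_{n}$-fixed points of the genuine spectrum $F(S(\lambda)_{+},E)$ are the derived ones; the paper's route via $(E^{hn\Z})^{hC_{n}}$ is model-independent but leans on the (standard) iterated homotopy fixed point theorem. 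One small point of emphasis: in your argument cofreeness is not what bridges $E^{h\Z}$ and the genuine fixed points --- your telescoping does that directly --- whereas in the paper cofreeness is exactly the bridge; this is a difference of architecture, not a gap.
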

\begin{proof}
Associated to the short exact sequence of groups
\[
\Z\to\Z\to C_{n},
\]
we have a weak equivalence
\[
E^{h\Z}\simeq \big(E^{h\Z}\big)^{hC_{n}}.
\]
Since \(E\) is a \(C_{n}\)-spectrum, the subgroup \(n\Z\) of \(\Z\) acts trivially, and hence \(E^{hn\Z}\) is exactly the spectrum defined above. From this, the claim about homotopy fixed points follows.

Since  \(C_{n}\) acts freely on \(BS^{1}\), and hence the canonical map
\[
EC_{n}\times BS^{1}\to BS^{1}
\]
is an equivariant equivalence, the map
\[
E^{hn\Z}\to F\big(EC_{n+},E^{hn\Z}\big)\cong F\big(EC_{n+}\wedge S(\lambda)_{+},E\big)
\]
is an equivariant equivalence.
\end{proof}

By restriction, this also gives us the homotopy fixed points for any  subgroup between \(n\Z\) and \(\Z\).
\begin{proposition}
If \(d\vert n\), then
\[
E^{h\big(\tfrac{n}{d}\Z\big)}=\big(E^{hn\Z}\big)^{C_{d}}.
\]
\end{proposition}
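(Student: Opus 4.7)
The plan is to mimic the proof of the preceding theorem but with an arbitrary subgroup $C_d \subset C_n$ in place of $C_n$ itself. The key observation is that under the surjection $\Z \twoheadrightarrow C_n$ (choice of generator), the preimage of the subgroup $C_d \subset C_n$ is precisely $\tfrac{n}{d}\Z$, fitting into a short exact sequence
\[
n\Z \to \tfrac{n}{d}\Z \to C_d.
\]
So the first step is to apply the usual Lyndon–Hochschild–Serre-style identification for iterated homotopy fixed points to this short exact sequence, giving a natural equivalence
\[
E^{h(\tfrac{n}{d}\Z)} \simeq \bigl(E^{hn\Z}\bigr)^{hC_d},
\]
where on the right the $C_d$ acts on $E^{hn\Z} = F(S(\lambda)_+, E)$ through its inclusion into $C_n$.

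Second, I would upgrade the $C_d$-homotopy fixed points on the right to genuine $C_d$-fixed points. Here the same cofreeness argument used in the preceding theorem applies: the subgroup $C_d \subset C_n$ still acts freely on $S(\lambda) \simeq BS^1$, so the projection
\[
EC_{d+} \wedge S(\lambda)_+ \longrightarrow S(\lambda)_+
\]
is a $C_d$-equivariant equivalence. Mapping into $E$ (and remembering that $E^{hn\Z} = F(S(\lambda)_+, E)$) yields
\[
\bigl(E^{hn\Z}\bigr)^{C_d} \xrightarrow{\ \simeq\ } F\bigl(EC_{d+}, E^{hn\Z}\bigr)^{C_d} = \bigl(E^{hn\Z}\bigr)^{hC_d}.
\]
Combining the two equivalences gives the claim.

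The proof is essentially routine once the indexing of the subgroups is set up correctly; the only real point that needs care is the bookkeeping — verifying that the subgroup $\tfrac{n}{d}\Z$ on the left really does correspond to restriction to $C_d \subset C_n$ on the right, and that the functor $F(S(\lambda)_+, -)$ remains cofree after restricting to any subgroup of $C_n$. Neither of these is a genuine obstacle: the first is purely a computation with the surjection $\Z \to C_n$, and the second follows because freeness of the $C_n$-action on $S(\lambda)$ restricts to freeness of the $C_d$-action for every $d \mid n$.
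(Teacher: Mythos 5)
Your proposal is correct and matches the paper's intent: the paper dispatches this proposition with the phrase ``by restriction,'' meaning one restricts to the subgroup $C_d\subset C_n$ (noting $i_{C_d}^{\ast}\lambda(1)$ is the faithful $C_d$-representation, so $i_{C_d}^{\ast}E^{hn\Z}=F(S(\lambda)_+,i_{C_d}^{\ast}E)$) and reruns the preceding theorem's argument, which is exactly what you do via the sequence $n\Z\to\tfrac{n}{d}\Z\to C_d$ together with freeness of the $C_d$-action on $S(\lambda)$. No gaps; this is the same proof, written out.
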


Recall that any \(G\)-spectrum \(E\) gives us a spectral Mackey functor via the functor
\[
T\mapsto \big(F(T,E)\big)^{G},
\]
which extends to a spectral enrichment of the Burnside category by work of Guillou-May \cite{GuillouMay}. We therefore have a spectral Mackey functor with
\[
C_{n}/C_{d}\mapsto E^{h\big(\frac{n}{d}\Z\big)}.
\]

\begin{remark}
The quotient map
\[
S\big(\lambda(1)\big)_{+}\to S^{0}
\]
gives a canonical \(C_{n}\)-equivariant map
\[
E\to E^{hn\Z}.
\]
A choice of point in \(S\big(\lambda(1)\big)\) gives a retraction
\[
i_{e}^{\ast} E^{hn\Z}\to i_{e}^{\ast}E.
\]
These give us a way to connect the Mackey functor structure on \(E^{hn\Z}\) to that of \(E\) itself.
\end{remark}

\subsection{Localness with respect to families}

For \(k\) and \(n\) not relatively prime, the spectra \(E/a_{\lambda(k)}\) are in general not cofree. They are, however, local with respect to a larger family \(\mathcal F_{k}\) of subgroups in the sense that the natural map
\[
E/a_{\lambda(k)}\to F\big(E\mathcal F_{k+},E/a_{\lambda(k)}\big)
\]
is a weak equivalence.

\begin{definition}
Let \(d\vert n\). Let
\[
\mathcal F_{d}=\{H\mid H\subset C_{d}\subset C_{n}\}.
\]

For \(k\) not dividing \(n\), let
\[
\mathcal F_{k}=\mathcal F_{\gcd(k,n)}.
\]
\end{definition}

\begin{theorem}\label{thm:Locality}
The \(C_{n}\)-spectrum \(E/a_{\lambda(k)}\) is local for the family \(\mathcal F_{k}\): the natural map
\[
E/a_{\lambda(k)}\to F\big(E\mathcal F_{k+},E/a_{\lambda(k)}\big)
\]
is a weak equivalence.
\end{theorem}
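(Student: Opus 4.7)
The plan is to use the reformulation \(E/a_{\lambda(k)}\simeq F(S(\lambda(k))_{+},E)\) established earlier, and to produce the natural map of the theorem as \(F(-,E)\) applied to a \(C_{n}\)-equivariant weak equivalence of based \(C_{n}\)-spaces. The main point, and really the only non-formal ingredient, is that \(S(\lambda(k))\) is already an \(\mathcal{F}_{k}\)-space: every one of its points has stabilizer in \(\mathcal{F}_{k}\). Once this is in hand, smashing with \(E\mathcal{F}_{k+}\) changes nothing at the level of the indexing space, and the hom--tensor adjunction delivers the statement.

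First I would compute the isotropy. Write \(d=\gcd(k,n)\), so that \(\mathcal{F}_{k}=\mathcal{F}_{d}\) by definition. For a point \(z\in S(\lambda(k))\subset\mathbb{C}\), the element \(\gamma^{m}\) fixes \(z\) iff \(\zeta^{km}=1\), i.e.\ iff \(n\mid km\), which is equivalent to \((n/d)\mid m\). Thus every point of \(S(\lambda(k))\) has stabilizer exactly the subgroup \(C_{d}\in\mathcal{F}_{d}\), so \(S(\lambda(k))\) is an \(\mathcal{F}_{k}\)-space.

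Next, by the universal property of \(E\mathcal{F}_{k}\), the projection
\[
E\mathcal{F}_{k+}\wedge S(\lambda(k))_{+}\longrightarrow S(\lambda(k))_{+}
\]
is a \(C_{n}\)-equivariant weak equivalence: on \(H\)-fixed points with \(H\in\mathcal{F}_{k}\), the factor \((E\mathcal{F}_{k})^{H}\) is contractible, while for \(H\notin\mathcal{F}_{k}\), both \((E\mathcal{F}_{k})^{H}\) and (by the isotropy calculation) \(S(\lambda(k))^{H}\) are empty, so both sides reduce to the basepoint.

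Finally, applying \(F(-,E)\) and using the hom--tensor adjunction gives
\[
E/a_{\lambda(k)}\simeq F\bigl(S(\lambda(k))_{+},E\bigr)\xrightarrow{\ \simeq\ } F\bigl(E\mathcal{F}_{k+}\wedge S(\lambda(k))_{+},E\bigr)\cong F\bigl(E\mathcal{F}_{k+},E/a_{\lambda(k)}\bigr),
\]
and this composite agrees with the natural map of the theorem, since both are induced by the unique map \(E\mathcal{F}_{k+}\to S^{0}\). The only step with actual content is the isotropy calculation for \(S(\lambda(k))\); the rest is the universal property of \(E\mathcal{F}_{k+}\) together with standard adjunctions, and I don't expect any real obstacle.
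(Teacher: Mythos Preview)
Your proof is correct and follows essentially the same approach as the paper. The paper phrases the key step slightly differently---observing that both \(S(\lambda(k))\) and \(E\mathcal{F}_{k}\) are pulled back along the quotient \(C_{n}\to C_{n}/C_{d}\), so that the equivalence \(E\mathcal{F}_{k}\times S(\lambda(k))\to S(\lambda(k))\) descends from the \(C_{n}/C_{d}\)-equivariant equivalence \(E(C_{n}/C_{d})\times S(\lambda(k))\to S(\lambda(k))\)---but this is just a repackaging of your direct isotropy computation, and the remaining adjunction argument is identical.
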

\begin{proof}
Let \(d=\gcd(n,k)\). Since every element of \(C_{d}\) acts as the identity in \(\lambda(k)\), the representation is the pullback of the representation \(\lambda(k)\) for the group \(C_{n}/C_{d}\cong C_{n/d}\). Similarly, the space \(E\mathcal F_{k}\) is the pullback of the space \(E(C_{n}/C_{d})\) under the quotient map. Since the \(C_{n}/C_{d}\)-equivariant map
\[
E(C_{n}/C_{d})\times S\big(\lambda(k)\big)\to S\big(\lambda(k)\big)
\]
is a \(C_{n}/C_{d}\)-equivariant equivalence, the pullback
\[
E\mathcal F_{k}\times S\big(\lambda(k)\big)\to S\big(\lambda(k)\big)
\]
is a \(C_{n}\)-equivariant equivalence. This means we have a natural \(C_{n}\)-equivariant equivalence for any \(C_{n}\)-spectrum \(E\):
\[
E/a_{\lambda(k)}=F\Big(S\big(\lambda(k)\big)_{+},E\Big)\xrightarrow{\simeq} F\Big(E\mathcal F_{k+}\wedge S\big(\lambda(k)\big)_{+},E\Big)\cong F\big(E\mathcal F_{k+},E/a_{\lambda(k)}\big),
\]
as desired.
%
%
\end{proof}


\begin{remark}
For cyclic \(p\)-groups, the linear ordering of the subgroups induces divisibility relations amongst the classes \(a_{\lambda(k)}\). The argument above shows that if the \(p\)-adic valuations of \(k\) and \(\ell\) agree, then \(a_{\lambda(k)}\) and \(a_{\lambda(\ell)}\) each divide the other. On the other hand, the class \(a_{\lambda(p^j)}\) divides \(a_{\lambda(p^{j+1})}\) (but not \(a_{\lambda(p^{j-1})}\)). This means that if we consider the commutative ring spectra
\[
S^0/a_{\lambda(p^k)}=F\Big(S\big(\lambda(p^k)\big)_+,S^0\Big),
\]
then all of the classes \(a_{\lambda(p^j)}\) are equal to zero, and hence the geometric fixed points for any subgroup containing \(C_{p^{k-1}}\) are contractible.
\end{remark}
%

\subsection{Multiplications and quotients}
If \(E\) is a ring, then the quotients \(E/a_{\lambda(k)}\) are in general are more highly structured that we might have expected. Recall that in genuine equivariant homotopy, there are a family of equivariant refinements of the classical \(E_{\infty}\)-operad. These \(\Ninfty\) operads, defined in \cite{BHNinfty}, have the property that algebras over them have not only a coherently commutative multiplication (so a ``naive'' \(E_{\infty}\) structure) but also coherently defined norm maps for so pairs of subgroups. This structure is described by an indexing system or equivalently an indexing category \cite{BHNinfty, BHIncomplete}.

\begin{definition}
Let \(\Fin^{G}\) denote the category of finite \(G\)-sets.

An indexing subcategory of \(\Fin^{G}\) is a wide, pullback stable subcategory of \(\Fin^{G}\) that is finite coproduct complete.
\end{definition}

The collection of all indexing subcategories of \(\Fin^{G}\) becomes a poset under inclusion, and work of Blumberg--Hill and Gutierez--White shows that there is an equivalence of categories between the homotopy category of \(\Ninfty\) operads and (see also \cite{BonPer, Rubin}) this poset.

\begin{theorem}[{\cite[Theorem 3.24]{BHNinfty}, \cite{GutiWhite}}]
There is an equivalence of categories from the homotopy category of \(\Ninfty\)-operads to the poset of indexing subcategories:
\[
\cO\mapsto\Fin^{G}_{\cO}.
\]
\end{theorem}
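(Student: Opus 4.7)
The plan is to verify that $\cO\mapsto\Fin^G_\cO$ is well-defined, construct an inverse realization functor, and then check that this pair exhibits the desired equivalence. Since the target is a poset, the content is essentially surjectivity together with the statement that the homotopy classes of maps between two $N_\infty$-operads form either an empty set or a singleton governed by inclusion of indexing subcategories.

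First I would make precise the assignment. Given an $N_\infty$-operad $\cO$, the $(G\times\Sigma_n)$-space $\cO(n)$ has its graph subgroup fixed points either empty or contractible. Each graph subgroup $\Gamma^\phi_H\subset H\times\Sigma_n$ for $\phi\colon H\to\Sigma_n$ corresponds to a finite $H$-set of cardinality $n$; declare such an $H$-set admissible when the corresponding fixed points are nonempty. I would then check, using operadic composition and the $(G\times\Sigma_n)$-equivariance, that the admissible sets at varying $H$ close up under the operations needed for an indexing subcategory: containing all isomorphisms and trivial maps (wideness), being stable under pullback along arbitrary maps in $\Fin^G$, and being closed under disjoint union. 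Wideness is clear, pullback stability follows from the operadic structure maps realizing restriction along subgroup inclusions, and coproduct closure comes from composing with operations of the form $\cO(k)\times \cO(n_1)\times\cdots\times\cO(n_k)\to\cO(\sum n_i)$. These together exhibit $\Fin^G_\cO$ as an object of the target poset, and a map of operads visibly preserves admissibility, so the assignment descends to homotopy categories.

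Next I would construct the inverse. Given an indexing subcategory $\mathcal I$, I would build an $N_\infty$-operad $\cO_\mathcal I$ whose admissibility data matches $\mathcal I$. One approach is to produce a suitable cofibrant model: starting from the "free" $G$-operad on the symmetric sequence whose arity $n$ is the nerve of the groupoid of admissible $H$-sets (for all $H$) of cardinality $n$, and feeding this through Berger-Moerdijk--style rectification to yield a cofibrant $\cO_\mathcal I$ whose graph-subgroup fixed points are contractible in exactly the prescribed pattern. Alternatively one can adopt the Gutiérrez--White construction using their transferred model structure on operads, or build it directly as in the work of Bonventre--Pereira or Rubin; the underlying point is to exhibit \emph{some} $N_\infty$-operad realizing the given indexing subcategory.

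Finally I would verify the equivalence. By construction $\Fin^G_{\cO_\mathcal I}=\mathcal I$, giving essential surjectivity. For the "fully faithful" portion I would show that if $\Fin^G_\cO\subset\Fin^G_{\cO'}$, then a map $\cO\to\cO'$ exists and is unique up to homotopy, while if the inclusion fails, no such map can exist. The latter direction is immediate because a map of operads sends contractible fixed points into nonempty fixed points, forcing $\Fin^G_\cO\subset\Fin^G_{\cO'}$. For existence and uniqueness, I would pass to cofibrant models and use that, levelwise, the space of $G$-equivariant maps between a cofibrant $N_\infty$-arity space and another $N_\infty$-arity space with strictly larger admissibility is itself contractible, since its obstruction theory is controlled by maps of fixed point diagrams all of whose targets are contractible.

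The principal obstacle is the realization step: producing an $N_\infty$-operad for an arbitrary indexing subcategory is genuinely subtle, because one must arrange contractibility of fixed points for exactly the prescribed family of graph subgroups, and this compatibility has to survive composition. Handling this is precisely the technical heart of \cite{GutiWhite}, and it is where I expect to need the most care in carrying out the plan above.
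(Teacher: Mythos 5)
This theorem is quoted rather than proved in the paper: it is imported wholesale from \cite[Theorem 3.24]{BHNinfty} (the assignment \(\cO\mapsto\Fin^{G}_{\cO}\) and its full faithfulness) and from \cite{GutiWhite} (realization of an arbitrary indexing subcategory, also due independently to Rubin and Bonventre--Pereira), so there is no in-paper argument to compare against. Your outline correctly reconstructs that division of labor --- admissible sets closing up into an indexing subcategory, maps of \(\Ninfty\)-operads existing and being homotopically unique exactly when the indexing subcategories are nested, and essential surjectivity via a realization construction which you rightly flag as the technical heart --- so it matches the structure of the cited proofs and is fine as a sketch.
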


\begin{definition}[{\cite[Definition 6.14]{BHNinfty}}]
Let \(\cO\) be an \(\Ninfty\)-operad for \(C_{n}\) and let \(H\subset C_{n}\) be a subgroup. Define a new indexing subcategory \(\Fin^{G}_{N_{H}^{G}\cO}\) by saying that a map \(f\colon S\to T\) is in \(\Fin^{G}_{N_{H}^{G}\cO}\) if and only if
\[
G/K\times f\colon G/K\times S\to G/K\times T\in \Fin^{G}_{\cO}.
\]
\end{definition}

\begin{proposition}
The assignment \(\cO\mapsto N_{H}^{G}\cO\) gives an endofunctor of the homotopy category of \(\Ninfty\) operads. We have a natural map
\[
\cO\to N_{H}^{G}\cO.
\]
\end{proposition}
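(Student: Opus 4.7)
My plan is to work entirely at the level of indexing subcategories, using the equivalence with $\Ninfty$-operads established in the previous theorem. Both the endofunctor structure on $N_H^G(-)$ and the natural map $\cO\to N_H^G\cO$ can then be extracted from statements about the poset of indexing subcategories of $\Fin^G$.

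First I would verify that $\Fin^G_{N_H^G\cO}$ is actually an indexing subcategory. Wideness is immediate since the defining condition is on morphisms only. Closure under identities and composition transports along $G/H\times(-)$, because $G/H\times\id_S=\id_{G/H\times S}$ and $G/H\times(gf)=(G/H\times g)(G/H\times f)$; closure under finite coproducts is similarly formal, since $G/H\times(-)$ commutes with coproducts. The substantive point is pullback stability: if $f\in\Fin^G_{N_H^G\cO}$ and $T'\to T$ is arbitrary in $\Fin^G$, then the pullback of $G/H\times f$ along $G/H\times T'\to G/H\times T$ is the image under $G/H\times(-)$ of the pullback of $f$ along $T'\to T$, hence lies in $\Fin^G_\cO$ by pullback stability there.

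Next, for the natural map, it suffices to show $\Fin^G_\cO\subseteq\Fin^G_{N_H^G\cO}$. The key observation is that $G/H\times f$ is itself a pullback of $f$, namely along the projection $G/H\times T\to T$. So if $f\in\Fin^G_\cO$, then pullback stability places $G/H\times f\in\Fin^G_\cO$, which is precisely membership of $f$ in $\Fin^G_{N_H^G\cO}$. Under the equivalence of categories, this inclusion of indexing subcategories gives the asserted map $\cO\to N_H^G\cO$.

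Finally, functoriality of $N_H^G(-)$ is formal. A map $\cO\to\cO'$ corresponds to an inclusion $\Fin^G_\cO\subseteq\Fin^G_{\cO'}$; then for $f\in\Fin^G_{N_H^G\cO}$ we have $G/H\times f\in\Fin^G_\cO\subseteq\Fin^G_{\cO'}$, so $f\in\Fin^G_{N_H^G\cO'}$. Naturality of $\cO\to N_H^G\cO$ in $\cO$ is immediate from the same computation. The only step with any genuine content is the pullback-stability verification in the first paragraph, and even this is a direct consequence of the fact that $G/H\times(-)$ preserves pullbacks, so I anticipate no real obstacle.
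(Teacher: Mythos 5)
Your argument is correct; the paper states this proposition without proof (it is quoted from Blumberg--Hill), and your verification at the level of indexing subcategories --- using the stated equivalence, the fact that \(G/H\times(-)\) preserves pullbacks and coproducts, and the key observation that \(G/H\times f\) is itself a pullback of \(f\) along the projection \(G/H\times T\to T\), which yields the inclusion \(\Fin^{G}_{\cO}\subseteq\Fin^{G}_{N_{H}^{G}\cO}\) and hence the map \(\cO\to N_{H}^{G}\cO\) --- is exactly the intended argument. (One small point: the displayed condition in the paper's definition reads \(G/K\times f\); your reading of this as \(G/H\times f\) is the correct one.)
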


\begin{theorem}
Let \(H_{k}=\ker(C_{n}\xrightarrow{g\mapsto g^{k}} C_{n})\). Then if \(R\) is an \(\cO\)-algebra, then \(R/a_{\lambda(k)}\) is an algebra over \(N_{H_{k}}^{G}\cO\). Moreover, if \(k\vert\ell\), then the map
\[
R/a_{\lambda(\ell)}\to R/a_{\lambda(k)}
\]
is a map of \(N_{H_{\ell}}^{G}\cO\)-algebras.
\end{theorem}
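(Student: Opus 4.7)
The plan is to exploit the identification $R/a_{\lambda(k)} \simeq F\bigl(S(\lambda(k))_+,R\bigr)$ together with the observation that $S(\lambda(k))$ carries a \emph{free} $C_n/H_k$-action. Indeed, $H_k = C_{\gcd(n,k)}$ acts trivially on $\lambda(k)$, so the representation descends along $C_n \twoheadrightarrow C_n/H_k$ to a faithful representation of the quotient, whose unit sphere then has free $C_n/H_k$-action. The plain $\cO$-algebra structure on $R/a_{\lambda(k)}$ is automatic from Corollary~\ref{cor:OperadicAlgebras}; the real content is to upgrade this to an $N_{H_k}^G\cO$-action using the free $C_n/H_k$-structure on $S(\lambda(k))$.

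The general principle I would invoke is that cotensoring a $G$-$\cO$-algebra with a based $G$-space whose underlying unbased space is $G/H$-free produces a $G$-$N_H^G\cO$-algebra. This matches the definition of $N_H^G\cO$: an admissible $G$-set $S$ for this operad is precisely one for which $G/H\times S$ is admissible for $\cO$, and the free $G/H$-action on the cotensoring space, composed with the diagonal, lets one trade an $S$-indexed norm for a $(G/H\times S)$-indexed one covered by the given $\cO$-structure. Concretely, for each admissible $S$ in $N_{H_k}^G\cO$, the structure map on $R/a_{\lambda(k)}$ is assembled by first applying the iterated lax monoidal structure of Corollary~\ref{cor:Monoidal} to get a norm landing in $F\bigl(S(\lambda(k))_+^{\wedge S},R\bigr)$, then precomposing with the diagonal on $S(\lambda(k))_+$ combined with the free $G/H_k$-factor to identify this with the $\cO$-norm for $G/H_k\times S$ on $R$.

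The main obstacle is organizing these pointwise constructions into coherent operadic action data. I would address this through the indexing-system reformulation of \cite{BHNinfty}: an $N_\infty$-algebra structure is determined by its compatible collection of admissible norm maps, and cotensoring with a $G/H$-free based space preserves admissibility in exactly the way needed to land in $\Fin^G_{N_H^G\cO}$. The $\mathcal{F}_k$-localness of Theorem~\ref{thm:Locality} provides an alternative and probably cleaner route: once one restricts to the $\mathcal{F}_k$-local category, the operad $N_{H_k}^G\cO$ becomes the natural one to consider, since subgroups not in $\mathcal{F}_k$ are contracted away, and the relevant admissibility condition automatically holds.

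For the naturality clause, note that $k\mid\ell$ implies $\gcd(n,k)\mid\gcd(n,\ell)$, hence $H_k\subseteq H_\ell$, which forces $\Fin^G_{N_{H_\ell}^G\cO}\subseteq\Fin^G_{N_{H_k}^G\cO}$; restriction along the corresponding map of operads $N_{H_\ell}^G\cO\to N_{H_k}^G\cO$ therefore makes $R/a_{\lambda(k)}$ into an $N_{H_\ell}^G\cO$-algebra. The tower map of Proposition~\ref{prop:Tower} comes from the power map $z\mapsto z^{\ell/k}\colon S(\lambda(k))\to S(\lambda(\ell))$, which is $C_n$-equivariant and descends to a map equivariant for the quotient $C_n/H_k\twoheadrightarrow C_n/H_\ell$. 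Because the enhanced $N_{H_\ell}^G\cO$-structure invokes only the $C_n/H_\ell$-free structure (which the power map respects), naturality of the diagonal and of the cotensor functor forces the tower map to commute with all the $N_{H_\ell}^G\cO$-structure maps, giving the desired map of $N_{H_\ell}^G\cO$-algebras.
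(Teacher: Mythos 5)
Your starting observations are the right ones (the identification \(R/a_{\lambda(k)}\simeq F(S(\lambda(k))_+,R)\), the fact that \(H_k=C_{\gcd(k,n)}\) acts trivially on \(S(\lambda(k))\) with free quotient action, and the direction of the operad map \(N_{H_\ell}^{G}\cO\to N_{H_k}^{G}\cO\) when \(k\mid\ell\)), but your primary route has a genuine gap that you yourself flag and then do not close. Building, one admissible \(G\)-set \(S\) at a time, a norm map out of the lax monoidal structure of Corollary~\ref{cor:Monoidal} plus the diagonal and the free \(G/H_k\)-factor only produces a collection of homotopy-level norm maps. The theorem you invoke to assemble these --- the equivalence between the homotopy category of \(\Ninfty\)-operads and the poset of indexing subcategories --- classifies \emph{operads}, not algebra structures: it does not say that a spectrum equipped with compatible admissible norms underlies an actual \(N_{H_k}^{G}\cO\)-algebra, and producing that coherent action is precisely the content of the theorem. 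So ``organizing these pointwise constructions into coherent operadic action data'' is not an obstacle to be waved at; it is the whole proof, and the indexing-system reformulation does not supply it.

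The ``alternative and probably cleaner route'' you mention in passing is in fact the paper's proof, but you omit its key step. From Theorem~\ref{thm:Locality}, \(R/a_{\lambda(k)}\) is local for the family \(\mathcal F_k\) of subgroups of \(H_k\); an \(\mathcal F_k\)-local \(\cO\)-algebra is canonically an algebra over the cotensored operad \(F(E\mathcal F_{k+},\cO)\), and one must then identify \(F(E\mathcal F_{k+},\cO)\) as an \(\Ninfty\)-operad whose admissible sets are exactly those of \(N_{H_k}^{G}\cO\). This is an honest computation with the equivariant homotopy types of the spaces of the cotensored operad (the adaptation of \cite[Proposition 6.25]{BHNinfty} from the cofree case \(d=1\) to general \(d\)); your phrase ``the relevant admissibility condition automatically holds'' skips it entirely. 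Once that identification is in place, the ``moreover'' clause does follow along the lines you sketch: \(\mathcal F_k\subseteq\mathcal F_\ell\) implies \(R/a_{\lambda(k)}\) is also \(\mathcal F_\ell\)-local, the tower map of Proposition~\ref{prop:Tower} is a map of \(\cO\)-algebras by functoriality of the cotensor, and both its source and target carry \(N_{H_\ell}^{G}\cO\)-structures through the same localized operad, so that part of your argument is essentially fine modulo the missing identification.
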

\begin{proof}
By Theorem~\ref{thm:Locality}, the spectrum \(R/a_{\lambda(k)}\) is local for the family of subgroups of \(H_{k}=C_{d}\), where \(d=\gcd(k,n)\). In particular, it is an algebra over the operad
\[
F\big(E\mathcal F_{d+},\cO).
\]
The proof of the special case that \(d=1\) \cite[Proposition 6.25]{BHNinfty} goes through without change, showing that this localized operad is \(N_{H_{k}}^{G}\cO\).
\end{proof}

\begin{corollary}
If \(R\) is an \(\cO\)-algebra, then \(R/a_{\lambda(1)}\) is a \(G\)-\(E_{\infty}\)-ring spectrum.
\end{corollary}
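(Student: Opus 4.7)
The plan is to apply the preceding theorem at $k=1$ and then identify the resulting operad as the complete $\Ninfty$-operad. First I would unpack $H_{1}$: since $g\mapsto g$ is the identity homomorphism on $C_{n}$, its kernel is the trivial subgroup $\{e\}$. The preceding theorem therefore asserts that $R/a_{\lambda(1)}$ is an algebra over $N_{e}^{G}\cO$, and the remaining task is to show $\Fin^{G}_{N_{e}^{G}\cO}=\Fin^{G}$.

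To identify $N_{e}^{G}\cO$, I would unravel the definition: a map $f\colon S\to T$ of finite $G$-sets lies in $\Fin^{G}_{N_{e}^{G}\cO}$ if and only if $G\times f\colon G\times S\to G\times T$ lies in $\Fin^{G}_{\cO}$. The crucial observation is that $G\times T$ is a \emph{free} $G$-set, so every stabilizer in the target is trivial. The fiber of $G\times f$ over any orbit representative is then a set with trivial stabilizer action, and such fibers are admissible in every $\Ninfty$-operad; this is a standard consequence of the fact that the indexing system of any $\Ninfty$-operad contains all trivial $H$-sets for every subgroup $H\subseteq G$. Hence $G\times f\in\Fin^{G}_{\cO}$ for every $f$, so every morphism of $\Fin^{G}$ is admissible for $N_{e}^{G}\cO$, which means $N_{e}^{G}\cO$ represents the complete $\Ninfty$-operad, i.e.\ a $G$-$E_{\infty}$-operad.

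The main obstacle — really the only nontrivial point — is verifying that maps between free $G$-sets are always admissible for any $\Ninfty$-operad. This reduces to the fact that the fibers over target orbit representatives have trivial stabilizer action, together with the observation that trivial $H$-sets lie in every indexing system. Once that is in hand, the corollary is a direct application of the preceding theorem to the case $k=1$.
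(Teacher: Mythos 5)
Your proof is correct and takes essentially the route the paper intends: the corollary is an immediate application of the preceding theorem at \(k=1\), where \(H_{1}=\{e\}\), together with the observation that \(N_{e}^{G}\cO\) is a complete (\(G\)-\(E_{\infty}\)) \(\Ninfty\)-operad. The only cosmetic difference is that you verify completeness combinatorially from the definition of \(\Fin^{G}_{N_{e}^{G}\cO}\) (maps into free \(G\)-sets have fibers with trivial stabilizers, hence admissible for every indexing system), whereas the paper leaves this step implicit, relying on the identification of the localized operad \(F(EG_{+},\cO)\) with the complete one via the cited result of Blumberg--Hill.
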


\subsection{A Greenlees--Tate approach}
We now restrict attention to \(n=p^{m}\) for \(p\) a prime. We can combine our quotienting by Euler classes with inverting others, giving an approach to computation. First note that the divisibility relations immediate give the following.
\begin{proposition}
Inverting a class \(a_{\lambda(p^{k})}\) also inverts \(a_{\lambda(p^{k-1})}\).
\end{proposition}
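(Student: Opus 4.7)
The plan is to deduce this immediately from the divisibility relation established earlier in Section~\ref{sec:TheEulerClasses}. Since $p^{k-1} \mid p^k$, the lemma on Euler class divisibility (using the power map $z \mapsto z^p$) produces a class
\[
u = \tfrac{a_{\lambda(p^k)}}{a_{\lambda(p^{k-1})}} \colon S^{\lambda(p^{k-1})} \to S^{\lambda(p^k)}
\]
satisfying the relation $a_{\lambda(p^k)} = u \cdot a_{\lambda(p^{k-1})}$ in the $RO(C_n)$-graded stable stems.

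First I would pass to the localized sphere $S^0[a_{\lambda(p^k)}^{-1}]$, where by construction $a_{\lambda(p^k)}$ becomes a unit. Since multiplication by any element of $\pi_\star S^0$ acts on the homotopy of any spectrum, it suffices to exhibit an inverse to $a_{\lambda(p^{k-1})}$ in this localized ring. The relation above, after multiplying by $a_{\lambda(p^k)}^{-1}$, rearranges as
\[
a_{\lambda(p^{k-1})} \cdot \bigl(u \cdot a_{\lambda(p^k)}^{-1}\bigr) = 1,
\]
so $u \cdot a_{\lambda(p^k)}^{-1}$ is a two-sided inverse to $a_{\lambda(p^{k-1})}$ (two-sided because the $RO(G)$-graded homotopy of a commutative ring spectrum is graded commutative). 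Hence inverting $a_{\lambda(p^k)}$ automatically inverts $a_{\lambda(p^{k-1})}$, and the same holds after smashing with any $C_n$-spectrum $E$.

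There is really no hard step here: the only subtlety is making sure that the divisibility equation—which a priori is a statement about maps of representation spheres—translates into a multiplicative identity in the relevant graded-commutative ring $\pi_\star^{C_n} S^0[a_{\lambda(p^k)}^{-1}]$. This is immediate from the construction of the Euler classes and of localization by smashing with the telescope on $a_{\lambda(p^k)}$.
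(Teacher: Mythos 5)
Your proof is correct and matches the paper's (implicit) argument: the paper gives no proof beyond noting that the divisibility relation $a_{\lambda(p^{k})}=\tfrac{a_{\lambda(p^{k})}}{a_{\lambda(p^{k-1})}}\,a_{\lambda(p^{k-1})}$ from Section~\ref{sec:TheEulerClasses} immediately forces $a_{\lambda(p^{k-1})}$ to become a unit once $a_{\lambda(p^{k})}$ is inverted, which is exactly the rearrangement you carry out.
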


In general, inverting the classes \(a_{\lambda(d)}\) is closely connected to the geometric fixed points.
\begin{proposition}
The infinite \(\lambda(d)\)-sphere \(S^{\infty\lambda(d)}=S^{0}[a_{\lambda(d)}^{-1}]\) is a model for \(\tilde{E}\mathcal F_{d}\).
\end{proposition}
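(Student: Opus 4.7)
The plan is to use the standard characterization of $\tilde{E}\mathcal{F}_d$ as the (up to weak equivalence) unique $C_n$-CW complex whose $H$-fixed points are contractible for $H\in\mathcal{F}_d$ and are $S^0$ for $H\notin\mathcal{F}_d$, and then compute the fixed points of $S^{\infty\lambda(d)}$ and verify they have this form.

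First I would unpack the notation: by definition of localization, $S^0[a_{\lambda(d)}^{-1}]$ is the homotopy colimit of the telescope
\[
S^0\xrightarrow{a_{\lambda(d)}} S^{\lambda(d)}\xrightarrow{a_{\lambda(d)}} S^{2\lambda(d)}\to\cdots,
\]
and since each transition map is the inclusion $S^{k\lambda(d)}\hookrightarrow S^{(k+1)\lambda(d)}$ at the poles, the colimit is exactly $S^{\infty\lambda(d)}$. So it remains to identify $S^{\infty\lambda(d)}$ with $\tilde{E}\mathcal{F}_d$.

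Next I would compute $H$-fixed points. Since fixed points commute with the sequential colimit defining $S^{\infty\lambda(d)}$, and since $(S^{V})^H = S^{V^H}$ for any representation $V$, we get
\[
\big(S^{\infty\lambda(d)}\big)^H = S^{\infty(\lambda(d)^H)}.
\]
If $\lambda(d)^H \neq 0$, this is $S^\infty$, which is contractible; if $\lambda(d)^H = 0$, this is $S^0$. So I need to show $\lambda(d)^H \neq 0$ precisely when $H \in \mathcal{F}_d$, i.e., when $H \subset C_d$. A direct computation: $\gamma^k \in C_n$ acts on $\lambda(d)$ by multiplication by $\zeta^{dk}$, and this is the identity iff $n\vert dk$, iff $(n/d)\vert k$ (using $d\vert n$). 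Hence the kernel of the action on $\lambda(d)$ is $\langle\gamma^{n/d}\rangle = C_d$. Since $\lambda(d)$ is either irreducible (when nontrivial as a representation of $C_n/C_d$) or a sum of sign representations (the case $n=2d$), the subspace $\lambda(d)^H$ is either all of $\lambda(d)$ or $0$, and equals $\lambda(d)$ precisely when $H\subset C_d$.

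Putting these together, $\big(S^{\infty\lambda(d)}\big)^H\simeq\ast$ for $H\in\mathcal{F}_d$ and $\big(S^{\infty\lambda(d)}\big)^H = S^0$ for $H\notin\mathcal{F}_d$. Since $S^{\infty\lambda(d)}$ admits a $C_n$-CW structure (as a filtered colimit of representation spheres) and these are exactly the fixed point data characterizing $\tilde{E}\mathcal{F}_d$, the conclusion follows. There isn't really a main obstacle here beyond keeping track of the identification of the kernel of $\lambda(d)$; the argument is entirely parallel to the classical identification $S^{\infty\sigma}\simeq\tilde{E}\mathcal{P}$ in the $C_2$-equivariant setting.
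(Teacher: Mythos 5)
Your proof is correct and takes essentially the same route as the paper: the paper's one-line argument identifies the colimit of the unit spheres $S(k\lambda(d))$ with $E\mathcal{F}_d$ and passes to the cofiber, while you verify the fixed-point characterization of $\tilde{E}\mathcal{F}_d$ directly on $S^{\infty\lambda(d)}$, and both come down to the same observation that $\lambda(d)^H\neq 0$ exactly when $H\subset C_d$. No gaps to flag.
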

\begin{proof}
The colimit of the unit spheres in \(k\lambda(d)\) as \(k\to\infty\) is a model for \(E\mathcal F_{d}\), from which the result follows.
\end{proof}

\begin{corollary}
The \(C_{p^{k}}\)-geometric fixed points of \(E\) can be computed as
\[
\Phi^{C_{p^{k}}}E=\big(E[a_{\lambda(p^{k-1})}^{-1}]\big)^{C_{p^{k}}}.
\]
\end{corollary}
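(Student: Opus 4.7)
The plan is to derive the result directly from the preceding proposition together with the standard definition of geometric fixed points. Recall that for any family \(\mathcal F\) of subgroups of \(G\), one has the cofiber sequence
\[
E\mathcal F_+\to S^0\to \tilde E\mathcal F,
\]
and geometric fixed points are defined as \(\Phi^G E = (E\wedge \tilde E\mathcal P)^G\), where \(\mathcal P\) is the family of proper subgroups of \(G\). So the corollary reduces to identifying \(E[a_{\lambda(p^{k-1})}^{-1}]\) with \(E\wedge \tilde E\mathcal P\) as \(C_{p^{k}}\)-spectra.

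First I would observe that because the subgroups of \(C_{p^{k}}\) are linearly ordered, the family \(\mathcal P\) of proper subgroups is exactly \(\mathcal F_{p^{k-1}}=\{H\subset C_{p^{k-1}}\}\) in the notation of Definition preceding Theorem~\ref{thm:Locality}. Indeed, \(\lambda(p^{k-1})\) has \(C_{p^{k-1}}\) as its stabilizer (and all smaller subgroups act trivially), while \(C_{p^{k}}\) itself acts non-trivially, so \(\mathcal F_{\lambda(p^{k-1})}\) coincides with \(\mathcal P\).

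Next I would invoke the preceding proposition, which identifies \(S^0[a_{\lambda(p^{k-1})}^{-1}]\) with \(\tilde E\mathcal F_{p^{k-1}} = \tilde E\mathcal P\). Smashing with \(E\) and using that localization at a self-map commutes with smash products, we obtain a natural equivalence
\[
E\wedge \tilde E\mathcal P \simeq E\wedge S^0[a_{\lambda(p^{k-1})}^{-1}] \simeq E[a_{\lambda(p^{k-1})}^{-1}].
\]
Taking \(C_{p^{k}}\)-fixed points and comparing with the definition of \(\Phi^{C_{p^{k}}}E\) yields the claim.

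There is really no serious obstacle here; the only point that needs a moment of care is the identification of \(\mathcal F_{p^{k-1}}\) with the family of proper subgroups of \(C_{p^{k}}\), which depends crucially on the linear ordering of subgroups of a cyclic \(p\)-group (so that the statement would fail for a general cyclic group \(C_n\), as inverting a single \(a_{\lambda(d)}\) destroys only one conjugacy class of stabilizers at a time).
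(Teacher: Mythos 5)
Your argument is correct and is exactly the one the paper intends: the corollary is an immediate consequence of the preceding proposition identifying \(S^{0}[a_{\lambda(p^{k-1})}^{-1}]\) with \(\tilde{E}\mathcal F_{p^{k-1}}\), combined with the standard characterization \(\Phi^{N}E=(\tilde{E}\mathcal F[N]\wedge E)^{N}\) and the fact that for cyclic \(p\)-groups the subgroups not containing \(C_{p^{k}}\) are precisely those contained in \(C_{p^{k-1}}\). Your closing remark about why this requires the linear ordering of subgroups (and hence the restriction to \(n=p^{m}\)) is also on point.
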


The \(C_{p^{k}}\)-fixed points of a \(C_{p^{m}}\)-spectrum are naturally a \(C_{p^{m}}/C_{p^{k}}\)-spectrum. Since the fixed points of
\[
\big(E[a_{\lambda(p^{k-1})}^{-1}]\big)^{C_{p^{r}}}\simeq\ast
\]
for \(r<k\), the \(C_{p^{m}}\)-spectrum \(E[ a_{\lambda(p^{k-1})}^{-1}]\) is completely determined by its \(C_{p^{k}}\)-fixed points.

\begin{theorem}\label{thm:GeometricFixedPoints}
For any \(C_{p^{m}}\)-spectrum \(E\) and for \(\ell\geq k\), we have a natural weak equivalence
\[
\Phi^{C_{p^{k}}}\big(E/a_{\lambda(p^{\ell})}\big)\simeq \big(\Phi^{C_{p^{k}}}E\big)/a_{\lambda(p^{\ell})}.
\]
\end{theorem}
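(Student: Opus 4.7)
The plan is to apply the exact, symmetric monoidal functor $\Phi^{C_{p^k}}$ directly to the defining cofiber sequence
\[
\Sigma^{-\lambda(p^\ell)} E \xrightarrow{a_{\lambda(p^\ell)}} E \longrightarrow E/a_{\lambda(p^\ell)}
\]
and identify each term of the result. Both constituents of the paper's definition $\Phi^{C_{p^k}} = \bigl((-)[a_{\lambda(p^{k-1})}^{-1}]\bigr)^{C_{p^k}}$, namely smashing localization and $C_{p^k}$-fixed points, preserve cofiber sequences, so applying $\Phi^{C_{p^k}}$ produces a cofiber sequence of $C_{p^{m-k}}$-spectra that I then need to recognize.

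\textbf{Key representation-theoretic step.} The generator of $C_{p^k} \subset C_{p^m}$ is $\gamma^{p^{m-k}}$, which acts on $\lambda(p^\ell)$ by $\zeta^{p^{m-k+\ell}}$. For $\ell \geq k$ the exponent is a multiple of $p^m$, so the restriction of $\lambda(p^\ell)$ to $C_{p^k}$ is trivial. Consequently $\lambda(p^\ell)^{C_{p^k}} = \lambda(p^\ell)$ as a vector space, and as a representation of the quotient $C_{p^m}/C_{p^k} \cong C_{p^{m-k}}$ with primitive root $\zeta^{p^k}$, it is the analogous $\lambda$-representation appearing on the right-hand side of the theorem.

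\textbf{Assembly and the main obstacle.} Using the identification $\Phi^{C_{p^k}}(S^V) \simeq S^{V^{C_{p^k}}}$ applied naturally to the Euler class $a_V\colon S^0 \to S^V$, the functor $\Phi^{C_{p^k}}$ sends $\Sigma^{-\lambda(p^\ell)} E$ to $\Sigma^{-\lambda(p^\ell)} \Phi^{C_{p^k}} E$ and sends $a_{\lambda(p^\ell)}$ to the $C_{p^{m-k}}$-equivariant Euler class $a_{\lambda(p^\ell)}$. The cofiber sequence becomes
\[
\Sigma^{-\lambda(p^\ell)}\Phi^{C_{p^k}} E \xrightarrow{a_{\lambda(p^\ell)}} \Phi^{C_{p^k}} E \longrightarrow \Phi^{C_{p^k}}(E/a_{\lambda(p^\ell)}),
\]
exhibiting the third term as $(\Phi^{C_{p^k}} E)/a_{\lambda(p^\ell)}$ by definition. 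The main -- indeed only -- thing requiring care is verifying the identification $\Phi^{C_{p^k}}(a_V) = a_{V^{C_{p^k}}}$ together with its naturality; under the paper's chosen definition this amounts to checking that after inverting $a_{\lambda(p^{k-1})}$, the sphere $S^V$ with trivial $C_{p^k}$-action descends under $(-)^{C_{p^k}}$ to $S^V$ viewed as a $C_{p^{m-k}}$-spectrum.
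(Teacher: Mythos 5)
Your proposal is correct, and it takes a mildly but genuinely different route from the paper's. The paper works with the mapping-spectrum model \(E/a_{\lambda(p^{\ell})}\simeq F\big(S(\lambda(p^{\ell}))_{+},E\big)\) together with its formula \(\Phi^{C_{p^{k}}}E=\big(E[a_{\lambda(p^{k-1})}^{-1}]\big)^{C_{p^{k}}}\): since \(S(\lambda(p^{\ell}))\) is a finite (hence small) \(G\)-CW complex, \(F\big(S(\lambda(p^{\ell}))_{+},-\big)\) passes through the filtered colimit defining the localization, and since \(C_{p^{k}}\) acts trivially on \(S(\lambda(p^{\ell}))\) when \(\ell\geq k\) (the same representation-theoretic point you verify with \(\gamma^{p^{m-k}}\) acting by \(\zeta^{p^{m-k+\ell}}\)), the categorical \(C_{p^{k}}\)-fixed points pass inside the function spectrum. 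In this way the paper never invokes the monoidality of geometric fixed points or the identifications \(\Phi^{H}(S^{V})\simeq S^{V^{H}}\), \(\Phi^{H}(a_{V})=a_{V^{H}}\); it needs only smallness plus the formal fact that function spectra out of an object inflated from the quotient commute with categorical fixed points. Your route instead applies \(\Phi^{C_{p^{k}}}\) to the defining cofiber sequence and leans on the standard package of properties of geometric fixed points (exactness, monoidality, values on representation spheres and Euler classes); the step you flag as the main thing requiring care is exactly the content that the paper's two commutation steps supply, and it can be discharged without extra hypotheses on \(E\) by writing \(\Sigma^{-\lambda(p^{\ell})}Z\simeq F\big(S^{\lambda(p^{\ell})},Z\big)\) and using that \(S^{\lambda(p^{\ell})}\) is inflated, so the fixed points again pass through by adjunction (inflation being monoidal). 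One small notational point: on the right-hand side both you and the paper write \(a_{\lambda(p^{\ell})}\) for the Euler class of the \emph{descended} representation on the quotient group (in the intrinsic indexing for \(C_{p^{m-k}}\) with primitive root \(\zeta^{p^{k}}\) it would be \(\lambda(p^{\ell-k})\)); your reading matches the paper's convention, so this is not a gap, but it is worth saying explicitly if you write the argument up in full.
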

\begin{proof}
A finite \(G\)-CW complex like \(S\big(\lambda(p^{\ell})\big)\) is small, and hence
\begin{align*}
F\Big(S\big(\lambda(p^{\ell})\big)_{+},\lim_{\longrightarrow} \Sigma^{s\lambda(p^{k-1})} E\Big)&\simeq
\lim_{\longrightarrow} F\Big(S\big(\lambda(p^{\ell})\big)_{+},\Sigma^{s\lambda(p^{k-1})} E\Big) \\
&\simeq \lim_{\longrightarrow} \Sigma^{s\lambda(p^{k-1})} F\Big(S\big(\lambda(p^{\ell})\big)_{+},E\Big).
\end{align*}
Since \(\ell\geq k\), \(C_{p^{k}}\) acts trivially on \(S\big(\lambda(p^{\ell})\big)\), and hence the \(C_{p^{k}}\)-fixed points pass through the function spectrum:
\[
F\Big(S\big(\lambda(p^{\ell})\big)_{+},E[a_{\lambda(p^{k-1})}^{-1}]\Big)^{C_{p^{k}}}\simeq
F\Big(S\big(\lambda(p^{\ell})\big)_{+},E[a_{\lambda(p^{k-1})}^{-1}]^{C_{p^{k}}}\Big),
\]
giving the result.
\end{proof}

\section{The \texorpdfstring{\(\Z\)}{Z}-homotopy fixed points of hyperreal bordism spectra}\label{sec:Computations}

\subsection{The Real Johnson--Wilson spectra}
Building on work of Kitchloo--Wilson, we can quickly compute the homotopy groups of the \(\Z\)-homotopy fixed points of any of the spectra \(E_\R(n)\).  Work of Hahn--Shi then also allows us to compute the \(\Z\)-homotopy fixed points of any of the Lubin--Tate spectra, viewed as \(C_2\)-spectra.

Recall that Kitchloo--Wilson identify a particular element
\[
x_n\in \pi_{b_{n}}EO(n),
\]
where
\[
b_{n}=2^{2n+1}-2^{n+2}+1,
\]
such that we have a cofiber sequence
\[
\Sigma^{b_{n}}EO(n)\xrightarrow{x_n} EO(n)\to E(n)
\]
and the homotopy fixed point spectra sequence can be written as a Bockstein spectral sequence for this cofiber sequence \cite{KW}. We can reinterpret this cofiber sequence using the equivariant periodicity in the spectrum \(E_{\R}(n)\), using slice spectral sequence names.

\begin{proposition}
The class \(u_{2\sigma}^{2^n}\) is a permanent cycle in the slice spectral sequence for \(E_\R(n)\), and multiplication by \(u_{2\sigma}^{2^n}\) gives an equivariant equivalence
\[
\Sigma^{2^{n+1}} E_\R(n)\to\Sigma^{2^{n+1}\sigma} E_\R(n).
\]
\end{proposition}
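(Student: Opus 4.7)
The plan is to establish the two claims of the proposition separately: I will invoke Hill--Hopkins--Ravenel slice differentials for the permanent cycle assertion, and observe that $E_\R(n)$ has contractible geometric $C_2$-fixed points for the equivalence assertion.

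For the permanent cycle, recall the slice differentials of \cite{HHR} for $BP^{((C_2))}$ and its quotients: in the appropriate normalization, for each $k\geq 1$,
\[
d_{2^{k+1}-1}\bigl(u_{2\sigma}^{2^{k-1}}\bigr) = \bar v_{k}\cdot a_\sigma^{2^{k+1}-1}.
\]
In $E_\R(n)\simeq BP_\R\langle n\rangle[\bar v_n^{-1}]$, the classes $\bar v_1,\dots,\bar v_n$ are nonzero (with $\bar v_n$ invertible) while $\bar v_k$ vanishes for $k>n$. Hence these differentials for $k=1,\dots,n$ successively kill $u_{2\sigma}, u_{2\sigma}^2,\dots, u_{2\sigma}^{2^{n-1}}$, and $u_{2\sigma}^{2^n}$ is the smallest surviving power of $u_{2\sigma}$ on the $E_{2^{n+1}}$-page. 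Any later differential on $u_{2\sigma}^{2^n}$ would have to land on a class involving some $\bar v_j$ with $j>n$, all of which vanish in $E_\R(n)$; combined with a Leibniz-style analysis and the multiplicative structure of the spectral sequence, this identifies $u_{2\sigma}^{2^n}$ as a permanent cycle detecting a class in $\pi^{C_2}_{2^{n+1}(1-\sigma)} E_\R(n)$.

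For the equivalence, I claim that $\Phi^{C_2} E_\R(n)\simeq *$. Since $\bar v_n$ has degree $(2^n-1)\rho$ with $\rho = 1+\sigma$, its image under geometric fixed points lies in degree $2^n-1$ inside $\Phi^{C_2}BP_\R$, a summand of $\Phi^{C_2}\MUR\simeq MO$. Because $MO_*$ is a polynomial $\F_2$-algebra on generators in all degrees not of the form $2^j-1$, we have $MO_{2^n-1}=0$ for every $n\geq 1$, so $\Phi^{C_2}\bar v_n=0$ and consequently
\[
\Phi^{C_2} E_\R(n) = \Phi^{C_2}BP_\R\langle n\rangle\bigl[(\Phi^{C_2}\bar v_n)^{-1}\bigr]\simeq *.
\]
Now let $C$ be the cofiber of the multiplication map $\Sigma^{2^{n+1}} E_\R(n)\to\Sigma^{2^{n+1}\sigma} E_\R(n)$. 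Its underlying spectrum is contractible because $u_{2\sigma}$ restricts to $1\in\pi_0 E(n)$, and $\Phi^{C_2} C\simeq *$ by the computation above. A $C_2$-spectrum whose underlying spectrum and whose $C_2$-geometric fixed points are both contractible is itself contractible, so $C\simeq *$ and the multiplication is an equivariant equivalence.

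The main obstacle is verifying that $u_{2\sigma}^{2^n}$ supports no anomalous later differentials beyond the Hill--Hopkins--Ravenel pattern. The Leibniz rule, together with the vanishing of $\bar v_k$ for $k>n$, kills the obvious multiplicatively generated candidates; the remaining technical point is to confirm directly that the slice $E_2$-page of $E_\R(n)$ is sufficiently sparse in the bidegrees of potential targets, which follows from the simple structure of $E_\R(n)$ once $\bar v_n$ has been inverted.
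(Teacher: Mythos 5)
The paper itself offers no argument for this proposition---it is presented as an equivariant reformulation of Kitchloo--Wilson's Bockstein spectral sequence, with the real content cited to \cite{KW} (and ultimately Hu--Kriz and the slice differentials of \cite{HHR})---so your attempt to reconstruct a proof along those standard lines is the right idea, and the architecture is sound: the slice differentials $d_{2^{k+1}-1}(u_{2\sigma}^{2^{k-1}})=\bar v_{k}a_{\sigma}^{2^{k+1}-1}$ for the permanent-cycle claim, and ``underlying equivalence plus contractible geometric fixed points'' for the periodicity. Two steps, however, need repair. First, your justification that $\Phi^{C_{2}}\bar v_{n}=0$ is based on the assertion that $MO_{2^{n}-1}=0$ for all $n$, which is false for $n\geq 3$: $MO_{*}\cong\F_{2}[x_{j}\mid j\neq 2^{i}-1]$ has plenty of decomposable classes in degrees of the form $2^{n}-1$ (e.g.\ $x_{2}x_{5}\in MO_{7}$). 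The degree argument does work one level up: $\Phi^{C_{2}}BP_{\R}\simeq H\F_{2}$ has homotopy concentrated in degree $0$, so $\Phi^{C_{2}}\bar v_{n}=0$ there, and by naturality its image in $\Phi^{C_{2}}BP_{\R}\langle n\rangle$ also vanishes (alternatively, use that the classes $\bar r_{2^{j}-1}$ map to zero in $MO_{*}$, as in \cite{HHR}); since $\Phi^{C_{2}}$ commutes with the filtered colimit inverting $\bar v_{n}$, the conclusion $\Phi^{C_{2}}E_{\R}(n)\simeq\ast$ stands, and with it your cofiber argument.

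Second, the permanent-cycle step is left incomplete, and the phrase ``any later differential would have to land on a class involving some $\bar v_{j}$ with $j>n$'' is not an argument: a differential may hit any class in the relevant tridegree, not only $\bar v_{j}$-multiples. The standard way to close this is a vanishing-line argument rather than a sparsity claim about $\bar v_{j}$'s: on earlier pages, $u_{2\sigma}^{2^{n}}$ is a square of a class supporting a differential, and Leibniz gives $d_{r}(x^{2})=2xd_{r}(x)=0$ because the positive-filtration part of the $E_{2}$-term is $a_{\sigma}$-power torsion killed by $2$; then on $E_{2^{n+1}}$ the relation $a_{\sigma}^{2^{n+1}-1}=\bar v_{n}^{-1}d_{2^{n+1}-1}(u_{2\sigma}^{2^{n-1}})=0$ (using that $\bar v_{n}$ is invertible in $E_{\R}(n)$) forces the page to vanish in filtrations $\geq 2^{n+1}-1$, so there is no room for any $d_{r}$, $r\geq 2^{n+1}$, on a filtration-zero class. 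With these two repairs your proof is complete and is essentially the argument the paper implicitly invokes from Kitchloo--Wilson.
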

Combined with the isomorphism given by multiplication by \(\bar{v}_n\), we can rewrite \(x_n\) in terms of \(a_\sigma\):
\[
x_n=a_\sigma \bar{v}_n^{2^{n}-1} \big(u_{2\sigma}^{2^n}\big)^{2^{n-1}-1}.
\]

\begin{corollary}
We have an equivalence
\[
E_\R(n)^{h\Z}\simeq EO(n)/x_n^2.
\]
\end{corollary}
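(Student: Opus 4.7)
The plan is to unwind the definitions of the left-hand side and match the result against the Kitchloo--Wilson cofiber sequence, using the formula for \(x_n\) given in the preceding proposition.

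First, I would observe that for a \(C_2\)-spectrum \(E\), the \(\Z\)-homotopy fixed points are, by our construction, the \(C_2\)-fixed points of the genuine \(C_2\)-spectrum \(E^{h 2\Z}\), and that the theorem of Section~\ref{sec:Quotientingas} identifies \(E^{h 2\Z}\) with \(E/a_{\lambda(1)}\). Since the group here is \(C_2\), the representation \(\lambda(1)\) is two copies of the sign representation (it is the case \(k \equiv n/2\pmod{n}\) of the classification proposition), so \(a_{\lambda(1)} = a_\sigma^{2}\). Thus the target equivalence reduces to
\[
\bigl(E_\R(n)/a_\sigma^{2}\bigr)^{C_2} \simeq EO(n)/x_n^{2}.
\]

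Next, I would apply the categorical \(C_2\)-fixed-point functor to the defining cofiber sequence
\[
\Sigma^{-2\sigma} E_\R(n) \xrightarrow{\;a_\sigma^{2}\;} E_\R(n) \longrightarrow E_\R(n)/a_\sigma^{2},
\]
using that \((-)^{C_2}\) is exact on genuine \(C_2\)-spectra. The middle term is \(EO(n)\) by definition (Kitchloo--Wilson's \(EO(n)\), which equals \(E_\R(n)^{C_2}\) since \(E_\R(n)\) is cofree), the right-hand term is \(E_\R(n)^{h\Z}\), and the left-hand term is an appropriate suspension of \(EO(n)\) once one trivialises \(\Sigma^{-2\sigma}\) using the available units.

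The crux of the argument is matching the resulting connecting map with \(x_n^{2}\). By the preceding proposition we have the identity
\[
x_n = a_\sigma\,\bar{v}_n^{\,2^{n}-1}\bigl(u_{2\sigma}^{2^{n}}\bigr)^{2^{n-1}-1},
\]
in which both \(\bar{v}_n\) and \(u_{2\sigma}^{2^{n}}\) act as \(C_2\)-equivariant self-equivalences of \(E_\R(n)\) (the former by the \(v_n\)-periodicity of Real Johnson--Wilson, the latter by the proposition that \(u_{2\sigma}^{2^{n}}\) is a permanent cycle realising an equivalence \(\Sigma^{2^{n+1}} E_\R(n) \simeq \Sigma^{2^{n+1}\sigma} E_\R(n)\)). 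Squaring gives \(x_n^{2} = a_\sigma^{2}\cdot u\), where \(u\) is invertible up to a pinned-down suspension shift. Cofibers of maps that differ by multiplication by an equivariant equivalence agree up to that suspension shift, so after passing to \(C_2\)-fixed points the cofiber of \((a_\sigma^{2})^{C_2}\) agrees with the cofiber of \(x_n^{2}\) on \(EO(n)\), yielding the claimed equivalence.

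The main obstacle I anticipate is Step 3: keeping careful track of the \(RO(C_2)\)-degrees of the equivalences \(\bar{v}_n\) and \(u_{2\sigma}^{2^{n}}\) so that the induced equivalence between the two cofiber-sequence sources is suspended correctly, placing \(x_n^{2}\) in its true integer degree \(2 b_n\). The degree bookkeeping is, however, already encoded in the fact that \(x_n\) has integer degree \(b_n\), so once the units have been inverted the comparison of cofibers is formal.
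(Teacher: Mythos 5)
Your argument is correct and is exactly the (implicit) argument the paper intends: the corollary is stated as an immediate consequence of the identity $x_n=a_\sigma\,\bar{v}_n^{2^n-1}\bigl(u_{2\sigma}^{2^n}\bigr)^{2^{n-1}-1}$, so that $x_n^2$ and $a_{\lambda(1)}=a_\sigma^2$ differ by an invertible class and their cofibers agree after passing to (co)free fixed points. Your unwinding of $E_\R(n)^{h\Z}=\bigl(E_\R(n)/a_{\lambda(1)}\bigr)^{C_2}$ and the degree bookkeeping fill in precisely the details the paper leaves to the reader.
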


An almost identical analysis can be applied to the work of Hahn--Shi on the homotopy groups of the Hopkins--Miller spectra \(EO_n\). We leave this to the interested reader.

\subsection{The slice spectral sequence for \texorpdfstring{\(\MUG\)}{MUG}-cohomology}
From this point on, let \(G=C_{2^{n}}\).  

The slice tower for \(\MUG\) gives for every \(G\)-space \(X\) a spectral sequence computing the \(RO(G)\)-graded Mackey functor cohomology \(\m{\MUG}^{\star}(X)\). This spectral sequence arises from mapping \(X\) into the slice tower for \(\MUG\), and if \(X\) is finite, then it converges strongly. Moreover, this spectral sequence is a spectral sequence of \(RO(G)\)-graded Tambara functors, though we will not need this structure.

To describe the \(\m{E}_{2}\)-term, we need some notation and elements from \cite{HHR}.

\begin{theorem}[{\cite[Proposition 5.27, Lemma 5.33]{HHR}}]
There are classes
\[
\bar{r}_{i}\colon S^{i\rho_{2}}\to i_{C_{2}}^{\ast}\MUG
\]
such that if \(G=\langle \gamma\rangle\) then
\[
\pi_{\ast\rho_{2}}\MUG\cong \Z[\br_{1},\gamma\br_{1},\dots,\gamma^{2^{n-1}-1}\br_{1},\br_{2},\dots].
\]
\end{theorem}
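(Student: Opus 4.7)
The strategy is to construct the classes $\bar{r}_i$ by transporting Araki's classical generators for Real bordism along the unit of the norm-restriction adjunction, and then to identify the ring $\pi_{*\rho_2}(i_{C_2}^* \MUG)$ via a K\"unneth computation applied to the smash-power description of the restriction of the norm.

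First, I would recall Araki's calculation: there exist $C_2$-equivariant lifts $\br_i \in \pi_{i\rho_2}^{C_2}(MU_\R)$ of the Milnor generators $r_i \in \pi_{2i}(MU)$, giving an isomorphism $\pi_{*\rho_2}^{C_2}(MU_\R) \cong \Z[\br_1, \br_2, \ldots]$. These lifts exist because $MU_\R$ is Real-orientable and its $*\rho_2$-graded homotopy is concentrated in nice bidegrees. Composing Araki's $\br_i$ with the unit $\eta \colon MU_\R \to i_{C_2}^* N_{C_2}^G MU_\R = i_{C_2}^* \MUG$ of the norm adjunction defines the desired classes $\br_i \colon S^{i\rho_2}\to i_{C_2}^*\MUG$. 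The action of $G$ on $i_{C_2}^*\MUG$ (well-defined as a $C_2$-equivariant automorphism since $C_2$ is central in the abelian group $G$) produces the translates $\gamma^j \br_i$, and only the cosets $0 \leq j < 2^{n-1} = [G:C_2]$ are relevant.

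Next, I would invoke the defining property of the norm: as a $C_2$-spectrum,
\[
i_{C_2}^* N_{C_2}^G MU_\R \simeq \bigwedge_{j=0}^{2^{n-1}-1} MU_\R,
\]
where the smash is indexed by cosets of $C_2$ in $G$ and $\gamma$ cyclically permutes factors (composed with the $C_2$-action on the wrap-around factor). Under this equivalence, $\gamma^j \br_i$ is precisely the class $\br_i$ included from the $j$-th smash factor. A K\"unneth isomorphism
\[
\pi_{*\rho_2}^{C_2}\Bigl(\bigwedge_{j=0}^{2^{n-1}-1} MU_\R\Bigr) \cong \bigotimes_{j=0}^{2^{n-1}-1} \pi_{*\rho_2}^{C_2}(MU_\R)
\]
then multiplies Araki's polynomial rings together to yield the claimed $\Z[\gamma^j \br_i]$.

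The principal obstacle is this final K\"unneth step, which does \emph{not} follow from formal flatness in the $RO(C_2)$-graded world: Mackey-functor-valued Tor groups are generally non-vanishing even over polynomial Green functors. The HHR argument circumvents this via the slice filtration of $MU_\R$, whose slices are wedges of $\rho_2$-suspensions of $H\underline{\Z}$; the smash-power slice filtration remains controllable, and an inductive analysis shows that in the $*\rho_2$-grading the higher Tors vanish. Equivalently, one can exploit that $\underline{\pi}_{*\rho_2}(MU_\R)$ is free as a permutation module over the appropriate subring so that the $C_2$-equivariant K\"unneth spectral sequence collapses in the $*\rho_2$-line. This collapse is the technical heart of the theorem; once it is established, the ring identification and the generating-set statement are immediate.
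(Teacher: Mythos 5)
The paper itself gives no argument here (it simply cites \cite[Prop.\ 5.27, Lem.\ 5.33]{HHR}), so I am comparing your proposal with the proof in HHR. There are two genuine problems, and the second shows that with your choice of generators the statement you are trying to prove is actually false. First, the ``K\"unneth isomorphism'' $\pi^{C_2}_{*\rho_2}\bigl(\bigwedge_{j} MU_{\R}\bigr)\cong\bigotimes_j \pi^{C_2}_{*\rho_2}(MU_{\R})$ is not merely hard to justify in the Mackey-functor world; its underlying, nonequivariant shadow already fails. For $G=C_4$ it would assert $\pi_{2*}(MU\wedge MU)\cong MU_{2*}\otimes_{\Z}MU_{2*}$, but $\pi_*(MU\wedge MU)=MU_*MU=MU_*[b_1,b_2,\dots]$, and the subring generated by the left and right unit images of $MU_*$ is proper: in degree $2$ one has $\eta_R([\mathbb{CP}^1])=[\mathbb{CP}^1]+2b_1$, so the two unit images span the sublattice $\Z\{[\mathbb{CP}^1],2b_1\}$ and miss $b_1$. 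Homotopy of a smash product over the sphere is not a tensor product, and no slice or evenness argument will make it one.

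This is why your construction of the classes also fails: pushing Araki/Hu--Kriz generators of $\pi^{C_2}_{*\rho_2}MU_{\R}$ forward along the unit $MU_{\R}\to i_{C_2}^{*}\MUG$ and translating by $\gamma$ produces (on underlying homotopy, up to sign) exactly the left- and right-unit images above, which do not generate polynomially; so $\Z[\br_1,\gamma\br_1,\dots]$ built from your classes is a proper subring of $\pi_{*\rho_2}$. In HHR the generators $\br_i$ are instead defined from formal-group-law data --- coefficients of the power series comparing the Real orientation coming from one smash factor with its $\gamma$-translate --- precisely so that the $b$-type classes are included among the translates; Proposition 5.27 of HHR is the algebraic statement that these translates are polynomial generators of $\pi^{u}_{*}i_{C_2}^{*}\MUG$ over $\Z$ (one should also record the wrap-around relation $\gamma^{2^{n-1}}\br_i=(-1)^{i}\br_i$, which your ``only the cosets $0\le j<2^{n-1}$ are relevant'' glosses over). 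The genuinely equivariant input is then not a K\"unneth collapse but the fact that the restriction map $\pi^{C_2}_{k\rho_2}X\to\pi^{u}_{2k}X$ is an isomorphism when $X$ is a smash power of $MU_{\R}$ (Araki, Hu--Kriz; Lemma 5.33 of HHR), which lets one lift the nonequivariant polynomial description to the $*\rho_2$-graded statement. So the correct skeleton is: choose the right generators by FGL manipulations in $\pi^u_*$, prove polynomial generation there, and transport along the restriction isomorphism --- rather than computing $\pi_{*\rho_2}$ of the smash power factorwise.
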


By the free-forget adjunction for associative rings, the classes \(\br_{i}\) give associative ring maps
\[
\mathbb S^{0}[\br_{i}]\to i_{C_{2}}^{\ast}\MUG,
\]
and smashing these together and norming up gives us a map of associative algebras
\[
A=\bigwedge_{i\geq 1}N_{C_{2}}^{G}\big(\mathbb S^{0}[\br_{i}]\big)\to\MUG.
\]
The underlying spectrum of \(A\) looks like a polynomial algebra in the classes \(\br_{i}\). The equivariance remembers, however, that classes like
\[
N_{C_{2}}^{G}\br_{i}=\br_{i}\cdot\gamma\br_{i}\cdot\dots\gamma^{2^{n-1}-1}\br_{i}
\]
are genuine \(G\)-equivariant classes carried by a \(G\)-regular representation sphere. The algebra \(A\) gives the slice associated graded.

\begin{theorem}[{\cite[Theorem 6.1]{HHR}}]
The slice associated graded for \(\MUG\) is
\[
H\mZ\wedge A\simeq H\mZ[G\cdot \bar{r}_{1},G\cdot\bar{r}_{2},\dots],
\]
where the topological degrees are determined by
\[
\big|N_{C_{2}}^{C_{2^{k}}}\bar{r}_{i}\big|=(2^{i}-1)\rho_{2^{k}}.
\]

The odd slices are contractible, and the \(2n\)\textsuperscript{th} slice are the wedge summands of \(A\wedge H\mZ\) corresponding to monomials of underlying degree \(2n\).
\end{theorem}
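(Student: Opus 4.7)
The plan is to establish the theorem in the three stages used in HHR: construct a multiplicative comparison map \(A \to \MUG\); identify an explicit monomial filtration on \(A \wedge H\mZ\) with the slice filtration; and read off the associated graded and the individual slices.

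First, I would build the map \(A \to \MUG\). Each class \(\br_i\) refines a standard generator of the Lazard ring, and by the free-forget adjunction for associative algebras it extends to a ring map \(\mathbb{S}^0[\br_i] \to i_{C_2}^{\ast} \MUG\). Applying the norm functor and using the defining presentation \(\MUG = N_{C_2}^{G} MU_\R\) gives a \(G\)-equivariant ring map \(N_{C_2}^{G} \mathbb{S}^0[\br_i] \to \MUG\). Smashing these together over \(i\) and using the multiplication on \(\MUG\) yields the asserted map \(A \to \MUG\), whose underlying map realizes \(A\) as a polynomial algebra on the \(G\)-translates of the \(\br_i\) sitting inside \(i_e^{\ast}\MUG\).

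Second, I would filter \(A\) by monomial underlying degree. Each monomial in the \(N_{C_2}^{C_{2^{k}}} \br_{i}\) is a wedge summand of the form \(G_+ \wedge_{H} S^{V_H}\), where \(H\) is the stabilizer of the monomial and \(V_H\) is a sum of regular representations of the intermediate subgroups. After smashing with \(H\mZ\), these summands become precisely the ``pure'' slice cells of HHR, each a slice of the even dimension equal to the underlying dimension of the monomial. Because no lower filtration stage can contribute a cell of the same slice dimension, this monomial filtration on \(A \wedge H\mZ\) agrees with the slice filtration, its associated graded wedge-decomposes as claimed, and every slice is even, which proves contractibility of the odd slices.

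The last ingredient is the Reduction Theorem of HHR, identifying the quotient \(\MUG / (G \cdot \br_1, G \cdot \br_2, \dots)\) with \(H\mZ\). Granted this, the map \(A \to \MUG\) smashed with \(H\mZ\) becomes an equivalence of filtered spectra, so the monomial filtration on \(A \wedge H\mZ\) is carried to the slice filtration on \(\MUG \wedge H\mZ\), giving the asserted description of the slices. The hard step is the Reduction Theorem itself: it is the deepest single input from HHR, requiring an induction that uses the slice spectral sequence together with delicate connectivity estimates for the cofibers of multiplication by the \(\br_i\). Everything else — the construction of \(A \to \MUG\), the identification of monomial summands as slice cells, and the parity statement — is essentially formal once the Reduction Theorem is in hand.
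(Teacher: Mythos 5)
The paper does not prove this statement at all: it is quoted verbatim from HHR (their Theorem 6.1, the Slice Theorem) and used as a black box, so there is no internal proof to compare against. Your sketch is, in outline, a faithful reconstruction of HHR's own argument: the refinement \(A=\bigwedge_i N_{C_2}^{G}\mathbb S^{0}[\br_i]\to \MUG\) via the free--forget adjunction and the norm, the filtration by monomial ideals whose subquotients are induced regular-representation cells \(G_+\wedge_H S^{m\rho_H}\), the fact that smashing these with \(H\mZ\) yields pure even slices, and the Reduction Theorem \(\MUG\wedge_A S^0\simeq H\mZ\) as the deep input --- that is exactly how the theorem is proved in the source, so your proposal is correct in structure and correctly isolates where the real work lies. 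One phrasing slip is worth fixing: the theorem identifies the slice associated graded of \(\MUG\) itself, not of \(\MUG\wedge H\mZ\). The correct final step is to filter \(\MUG\) as an \(A\)-module by \(M_n=\MUG\wedge_A I_n\) (with \(I_n\) the ideal of monomials of underlying degree at least \(2n\)); the Reduction Theorem identifies the subquotients as \(W_n\wedge H\mZ\) with \(W_n\) the wedge of degree-\(2n\) slice cells, and one then invokes the recognition criterion (HHR Proposition 4.45, which this paper also uses elsewhere) that a tower whose fibers are slices of the correct, decreasing dimensions and whose stages are suitably slice-connective is the slice tower --- rather than the looser claim that ``no lower filtration stage can contribute a cell of the same slice dimension.'' With that adjustment your outline matches the cited proof.
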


\begin{corollary}
For any finite \(G\)-space \(X\), there is a strongly convergent spectral sequence of \(RO(G)\)-graded Mackey functors
\[
\m{H}^{\star}(X;\mZ)[G\cdot \bar{r}_{1},G\cdot\bar{r}_{2},\dots]\Rightarrow \m{\MUG}^{\star}(X).
\]
\end{corollary}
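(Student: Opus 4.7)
The plan is to obtain the spectral sequence by mapping the finite $G$-space $X_{+}$ into the slice tower for $\MUG$ and then identifying the $E_{2}$-page using the description of the slice associated graded in the preceding theorem. Concretely, I would start with the slice tower
\[
\dots \to P^{n+1}\MUG \to P^{n}\MUG \to P^{n-1}\MUG \to \dots
\]
whose $n$-th layer $P^{n}_{n}\MUG$ is the $n$-th slice. Applying $F(X_{+},-)$ preserves this tower, and the resulting tower of spectra yields a conditionally convergent spectral sequence
\[
E_{1}^{s,V} = \m{\pi}_{V-s}F(X_{+},P^{s}_{s}\MUG) \Rightarrow \m{\MUG}^{V}(X),
\]
indexed so that the abutment is $RO(G)$-graded Mackey functor $\MUG$-cohomology of $X$.

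Next I would identify the $E_{1}$- (equivalently $E_{2}$-) page with the claimed polynomial ring. By the theorem quoted from \cite{HHR}, the total slice associated graded of $\MUG$ is equivalent to $H\mZ \wedge A$, where $A$ is a wedge of suspensions by multiples of regular representation spheres $\rho_{2^{k}}$ corresponding to monomials in the orbit classes $G\cdot\bar r_{i}$. Since smashing $H\mZ$ with a representation sphere shifts $RO(G)$-graded Bredon cohomology by the corresponding virtual representation, the $E_{2}$-page becomes $\m{H}^{\star}(X;\mZ)$ tensored over $\m{H}^{\star}(\mathrm{pt};\mZ)$ with the monomial basis of the polynomial algebra $\Z[G\cdot \bar r_{1}, G\cdot \bar r_{2},\dots]$, that is,
\[
\m{H}^{\star}(X;\mZ)[G\cdot\bar r_{1},G\cdot\bar r_{2},\dots],
\]
with the bidegrees determined by the identities $|N_{C_{2}}^{C_{2^{k}}}\bar r_{i}| = (2^{i}-1)\rho_{2^{k}}$.

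Finally, I would upgrade conditional to strong convergence. Because $X$ is finite, $F(X_{+},-)$ commutes with the homotopy limit up to a Milnor $\lim^{1}$ term; since the slices of $\MUG$ are uniformly bounded below (in fact, the $2n$-slice is $(2n-1)$-connective in the underlying category and slice connective in the equivariant sense) and $X$ is a finite $G$-CW complex of bounded cell dimension, only finitely many slices contribute in each fixed total bidegree after mapping in from $X$. This makes the spectral sequence strongly convergent and also ensures there are no $\lim^{1}$ obstructions. The only step requiring real care is this last one: verifying that the connectivity estimates on the slice filtration, combined with the finite CW structure on $X$, give genuine strong convergence in the $RO(G)$-graded sense, but this follows from the standard framework for slice spectral sequences since both $X$ and each slice are compatible with the connectivity conventions used in \cite{HHR}.
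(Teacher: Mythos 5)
Your proposal follows essentially the same route as the paper: map the finite $G$-space into the slice tower for $\MUG$, identify the layers via the HHR computation of the slice associated graded $H\mZ\wedge A$, and use finiteness of $X$ for strong convergence, which is exactly how the paper obtains this spectral sequence. The only caveat is notational: monomials $\bar{p}$ with proper stabilizer $H$ contribute summands $\Ind_{H}^{G}\m{H}^{\star-||\bar{p}||}(i_{H}^{\ast}X;\mZ)$ rather than mere representation-sphere shifts of $\m{H}^{\star}(X;\mZ)$, which is what the polynomial notation in the statement is shorthand for.
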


A remark on the notation here might be helpful to the reader. Consider a monic monomial \(\bar{p}\) in the \(\br_{i}\) and their translates under the \(G\)-action. This generates a summand
\[
G_{+}\wedge_{H} S^{||\bar{p}||},
\]
where \(H\) is the stabilizer of the mod \(2\) reduction of \(\bar{p}\), and where \(||\bar{p}||\) is determined by the underlying degree \(|\bar{p}|\) via
\[
||\bar{p}||=\frac{|\bar{p}|}{|H|}\rho_{H}.
\]
The summand of
\[
\m{H}^{\star}(X;\mZ)[G\cdot \bar{r}_{1},G\cdot\bar{r}_{2},\dots]
\]
corresponding to \(\bar{p}\) is then
\[
\Ind_{H}^{G} \m{H}^{\star-||\bar{p}||}(i_{H}^{\ast}X;\mZ),
\]
where
\[
\Ind_{H}^{G}\colon\Mackey^{H}\to\Mackey^{G}
\]
is induction, defined by \(\Ind_{H}^{G}\mM(T)=\mM(i_{H}^{\ast}T)\).
This determines all of the summands.

The bidegrees are determined by recalling that the spectral sequence is the Adams grading applied to the spectral sequence for a tower. In particular, we have
\[
E_{2}^{s,V}=\m{\pi}_{V-s}F(X,P_{\dim V}^{\dim V})\Rightarrow \m{\pi}_{V-s}F\big(X,\MUG\big)=\m{\MUG}^{s-V}(X).
\]
Note that this formula works for any \(V\in RO(G)\). Because the odd slices are contractible, this formula is only non-zero when \(\dim V\) is even and non-negative. Since
\[
P_{2n}^{2n}=\bigvee_{\bar{p}\in\mathcal I_{n}} G_{+}\wedge_{H_{\bar{p}}} S^{||\bar{p}||}\wedge H\mZ,
\]
where \(\mathcal I_{n}\) is the set of orbits of monic monomials of underlying degree \(2n\), if \(\dim V=2n\), then the \(E_{2}\) term for this \(V\) is:
\begin{align*}
E_{2}^{s,V}&=\m{\pi}_{V-s}F\left(X,\bigvee_{\bar{p}\in\mathcal I_{n}} G_{+}\wedge_{H_{\bar{p}}} S^{||\bar{p}||}\wedge H\mZ\right) \\
&=\bigoplus_{\bar{p}\in\mathcal I_{n}} \Ind_{H}^{G}\pi_{i_{H}^{\ast}V-s}F\big(X,\Sigma^{||\bar{p}||}H\mZ\big) \\
&=\bigoplus_{\bar{p}\in\mathcal I_{n}} \Ind_{H}^{G}\pi_{i_{H}^{\ast}V-s-||\bar{p}||}F(X,H\mZ)\\
&=\bigoplus_{\bar{p}\in\mathcal I_{n}} \Ind_{H}^{G} \m{H}^{s+||\bar{p}||-i_{H}^{\ast}V}(X;\mZ).
\end{align*}

\begin{remark}
The slice spectral sequence for computing homotopy groups lies in quadrants I and III using the standard grading convention (and we note that the fact that \(s\) can be negative here is important for that). Since we are using the slice filtration of \(\MUG\) to compute the \(\MUG\)-cohomology of a space, we no longer have the usual simple vanishing lines or regions.
\end{remark}

\subsection{The \texorpdfstring{\(RO(G)\)}{ROG}-graded homotopy of \texorpdfstring{\(H\Z/a_{\lambda(1)}\)}{HZ mod a}}
Recall that for any orientable representation \(V\), there are orientation classes
\[
u_{V}\in H_{\dim V}(S^{V};\mZ)\cong \Z.
\]
These classes have the property that for any subgroup \(H\),
\[
i_{H}^{\ast}u_{V}\in H_{\dim V}(S^{i_{H}^{\ast}V};\mZ)\cong \Z
\]
is a generator. In particular, multiplication by \(u_{V}\) is always an underlying equivalence.

\begin{proposition}
The elements
\[
u_{V}\in \m{\pi}_{\dim V-V}\big(H\m{\ZZ}/a_{\lambda(1)}\big)(G/G)
\]
are invertible.
\end{proposition}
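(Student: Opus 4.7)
The plan is to leverage the modified cofreeness established in Theorem~\ref{thm:Locality}. For \(G=C_{2^{n}}\) and \(k=1\), we have \(\gcd(1,2^{n})=1\), so the family \(\mathcal F_{1}\) is the trivial family \(\{e\}\) and \(E\mathcal F_{1}\) is a model for \(EG\). Theorem~\ref{thm:Locality} then asserts that \(H\m{\ZZ}/a_{\lambda(1)}\) is genuinely cofree, i.e., the natural map
\[
H\m{\ZZ}/a_{\lambda(1)}\longrightarrow F\big(EG_{+},H\m{\ZZ}/a_{\lambda(1)}\big)
\]
is an equivalence. I would define \(u_{V}\in\m{\pi}_{\dim V-V}\big(H\m{\ZZ}/a_{\lambda(1)}\big)(G/G)\) as the image of the usual orientation class under the canonical quotient map \(H\m{\ZZ}\to H\m{\ZZ}/a_{\lambda(1)}\), which exists because the quotient sits in a cofiber sequence under \(H\m{\ZZ}\).

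Next, I would invoke the general principle that a map between cofree \(G\)-spectra is a genuine equivariant equivalence if and only if its underlying non-equivariant map is an equivalence. This is because \(F(EG_{+},-)\) preserves underlying equivalences (as \(EG_{+}\) is built from free \(G\)-cells) and is the identity up to equivalence on cofree spectra. Applied to the map
\[
u_{V}\cdot\colon H\m{\ZZ}/a_{\lambda(1)}\longrightarrow\Sigma^{V-\dim V}\big(H\m{\ZZ}/a_{\lambda(1)}\big),
\]
both sides are cofree (a representation suspension of a cofree spectrum is cofree), so checking invertibility reduces to checking that the underlying map is an equivalence.

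On underlying spectra the class \(u_{V}\) lies in \(\pi_{0}(H\ZZ)=\ZZ\) and, by the orientability of \(V\) recalled immediately before the statement, restricts to a generator, which is a unit. Hence multiplication by \(u_{V}\) is an underlying equivalence and therefore a \(G\)-equivalence, so \(u_{V}\) is invertible in the \(RO(G)\)-graded homotopy ring of \(H\m{\ZZ}/a_{\lambda(1)}\). The only genuinely nontrivial input is the cofreeness provided by Theorem~\ref{thm:Locality}; the rest of the argument is formal. The broader moral is that while \(u_{V}\) is not invertible in \(H\m{\ZZ}\) itself (its invertibility is precisely the obstruction measured by Euler classes like \(a_{\lambda(1)}\)), killing \(a_{\lambda(1)}\) makes the spectrum Borel and so collapses the distinction between underlying and equivariant behavior.
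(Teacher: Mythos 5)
Your proof is correct and follows essentially the same route as the paper: establish cofreeness of \(H\m{\ZZ}/a_{\lambda(1)}\) (via the \(k=1\) case of Theorem~\ref{thm:Locality}, where the family is trivial), then note that for cofree spectra equivariant equivalences are detected on underlying homotopy, where \(u_{V}\) restricts to a generator and hence acts invertibly. The paper's proof is just a compressed version of this argument.
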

\begin{proof}
Since \(H\mZ/a_{\lambda(1)}\) is cofree, weak equivalences are equivariant maps detected on the underlying homotopy. In particular, multiplication by \(u_{V}\) is an invertible self-map.
\end{proof}

Before we describe the $RO(G)$-graded homotopy groups of \(H\mZ/a_{\lambda}\), we need some notations for Mackey functors.
\begin{notation}\mbox{}
\begin{enumerate}
\item Let $\square$ denote $\mZ$, the constant Mackey functor for the integers.
\item Let $\halfbox$ stands for $\mZ^*$, the dual of $\mZ$.
\item Let $\overline{\square}$ denote $\mZ_-$, the fixed point Mackey functor for the sign representation.
\item Let $\overline{\halfbox}$ be the dual of $\overline{\square}$.
\item Let $\circ$ be the cokernel of the map $\UZ^* \rightarrow \UZ$ which is an underlying isomorphism.
\item Define $\overline{\circ}$ and \(\bullet\) by the exact sequence
\[
0\to \overline{\circ}\to \Ind_{C_{2^{n-1}}}^{C_{2^{n}}} \Res_{C_{2^{n-1}}}^{C_{2^n}}\circ\xrightarrow{Tr}\circ\to \bullet\to 0,
\]
where the map labeled \(Tr\) is counit of the induction-restriction adjunction.
\item Finally, $\dot{\overline{\halfbox}}$ fits into the only nontrivial extension
\[
\m{0} \rightarrow \bullet \rightarrow \dot{\overline{\halfbox}} \rightarrow \overline{\halfbox} \rightarrow \m{0},
\]
\end{enumerate}
\end{notation}

The {\EM} spectra associated to many of these Mackey functors are actually \(RO(C_{2^n})\)-graded suspensions of \(H\mZ\).
\begin{proposition}\label{prop:EilenbergMacLane}
We have equivalences
\begin{align*}
H\mZ^\ast&\simeq \Sigma^{2-\lambda(1)}H\mZ \\
H\overline{\square}&\simeq \Sigma^{1-\sigma} H\mZ \\
H\dot{\overline{\halfbox}}&\simeq \Sigma^{3-\sigma-\lambda}H\mZ.
\end{align*}
\end{proposition}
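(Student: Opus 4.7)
The plan is to compute the homotopy Mackey functors of the right-hand sides directly and match them with the target Mackey functors. Since $H\mZ$ has homotopy concentrated in degree zero, verifying $\Sigma^{W} H\mZ \simeq HM$ for a virtual representation $W$ and Mackey functor $M$ reduces to checking that $\m\pi_{\ast}\big(\Sigma^{W} H\mZ\big)$ is concentrated in a single degree, equal to $M$. All three statements therefore become computations of Bredon homology of representation spheres with constant $\mZ$ coefficients, for which cellular filtrations combined with the Mackey-functor long exact sequence are the standard tool.

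For $H\overline{\square} \simeq \Sigma^{1-\sigma} H\mZ$, I would smash the cofiber sequence
\[
(G/C_{2^{n-1}})_{+} \to S^{0} \to S^{\sigma}
\]
with $H\mZ$. The first term has $\m\pi_{0}$ equal to the induced Mackey functor $\Ind_{C_{2^{n-1}}}^{G} \mZ$ and vanishes in other degrees; the map to $H\mZ$ is determined by the counit of the induction-restriction adjunction. The resulting long exact sequence forces $\m\pi_{\ast}(\Sigma^{-\sigma} H\mZ)$ to be concentrated in degree $-1$, and a direct inspection of the values on the subgroup lattice identifies that Mackey functor with $\overline{\square}$.

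For $H\mZ^{\ast} \simeq \Sigma^{2-\lambda(1)} H\mZ$, the input is that $S(\lambda(1))$ is a free $G$-space of the equivariant homotopy type of a circle with rotation action, admitting a $G$-CW structure with a single free $0$-cell and a single free $1$-cell. Smashing the cofiber sequence
\[
S(\lambda(1))_{+} \to S^{0} \to S^{\lambda(1)}
\]
with $H\mZ$, the source has $\m\pi_{0}$ and $\m\pi_{1}$ both given by the free Mackey functor on one $G$-generator, and the map into $\m\pi_{0} H\mZ = \mZ$ is the sum-over-orbits map. The long exact sequence then pins down $\m\pi_{\ast}(\Sigma^{-\lambda(1)} H\mZ)$, and the Mackey functor appearing in the surviving total degree is, by direct unwinding of kernels and cokernels on the subgroup lattice, the dual $\mZ^{\ast} = \halfbox$.

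Finally, for $H\dot{\overline{\halfbox}} \simeq \Sigma^{3-\sigma-\lambda} H\mZ$, I would use the factorization $S^{\sigma+\lambda} \simeq S^{\sigma} \wedge S^{\lambda}$ and combine the two previous computations. Smashing the two cofiber sequences and filtering by the $\sigma$-piece (equivalently, iterating the long exact sequences) produces a two-step filtration in a single total degree whose associated graded has $\overline{\halfbox}$ on top and $\bullet$ on the bottom. The main obstacle is to show that the resulting extension of Mackey functors is the nontrivial one defining $\dot{\overline{\halfbox}}$; this is done by identifying the connecting Mackey-functor map with a specific transfer in the Burnside category, and checking by hand that it does not split---exactly the content of the only nontrivial extension of $\overline{\halfbox}$ by $\bullet$.
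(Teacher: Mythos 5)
Your overall strategy---reduce each equivalence to a Bredon (co)homology computation for a representation sphere, using the evident cell structures of \(S(\sigma)\) and \(S(\lambda(1))\) and the associated long exact sequences---is exactly what the paper's one-line proof appeals to. But as written there is a variance error that would derail the computation. The proposition concerns the \emph{negative} suspensions: it asserts \(\Sigma^{-\sigma}H\mZ\simeq\Sigma^{-1}H\overline{\square}\) and \(\Sigma^{-\lambda(1)}H\mZ\simeq\Sigma^{-2}H\mZ^{\ast}\), i.e.\ it is a statement about \(F(S^{\sigma},H\mZ)\) and \(F(S^{\lambda(1)},H\mZ)\) (Bredon cohomology of the representation spheres). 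Smashing the cofiber sequences \(S(V)_{+}\to S^{0}\to S^{V}\) with \(H\mZ\), as you propose, computes the \emph{positive} suspensions \(\Sigma^{V}H\mZ\), and the maps you single out (the counit \(\Ind\Res\mZ\to\mZ\), ``sum over orbits'') are precisely the transfers appearing in that homological long exact sequence. Those transfers are not injective, and correspondingly \(\Sigma^{\sigma}H\mZ\) and \(\Sigma^{\lambda(1)}H\mZ\) are genuinely \emph{not} shifted Eilenberg--Mac Lane spectra: for example \(\m{\pi}_{0}(\Sigma^{\sigma}H\mZ)\) contains a \(\Z/2\) at the top level in addition to \(\m{\pi}_{1}\cong\mZ_{-}\), and \(\Sigma^{\lambda(1)}H\mZ\) has nonzero homotopy Mackey functors in two different degrees. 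So the long exact sequence you set up cannot ``force concentration in a single degree.'' The fix is to map the cofiber sequences \emph{into} \(H\mZ\) (equivalently, desuspend by the representation before taking the long exact sequence); there the relevant map is the unit/restriction \(\mZ\to\Ind_{C_{2^{n-1}}}^{G}\Res_{C_{2^{n-1}}}^{G}\mZ\) (and its \(\lambda(1)\)-analogue), which is injective, and the single cokernel is \(\overline{\square}\), resp.\ \(\mZ^{\ast}\) two degrees down.

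A second, related slip: the homotopy Mackey functors of \(S(\lambda(1))_{+}\wedge H\mZ\) are not free on one \(G\)-generator. The free Mackey functors are the cellular chains of the free \(G\)-CW structure (one free \(0\)-cell and one free \(1\)-cell); the homotopy of the smash, resp.\ of \(F(S(\lambda(1))_{+},H\mZ)=H\mZ/a_{\lambda(1)}\), is the homology of that two-term complex, namely \(\mZ\) and \(\mZ^{\ast}\) in adjacent degrees---and distinguishing \(\mZ\) from \(\mZ^{\ast}\) here is exactly the content of the first identity, so it cannot be glossed. Once these points are corrected, your third case goes through; in fact it simplifies, since the first identity gives \(\Sigma^{3-\sigma-\lambda}H\mZ\simeq\Sigma^{1-\sigma}H\mZ^{\ast}\), so the answer is the single cokernel of the unit \(\mZ^{\ast}\to\Ind_{C_{2^{n-1}}}^{G}\Res_{C_{2^{n-1}}}^{G}\mZ^{\ast}\), and the only remaining work is to identify that cokernel with \(\dot{\overline{\halfbox}}\), i.e.\ to verify the nonsplit extension by \(\bullet\) that you describe. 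With the variance straightened out, your argument is the standard chain-level computation the paper cites.
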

\begin{proof}
These are direct consequences of the standard chain complexes computing Bredon homology for \(C_{2^n}\). See, for example \cite{CDMProof, HHRICM, HHR}.
\end{proof}

Let $\U{B}$ be the following Mackey functor graded by $i+j\sigma$, where the horizontal coordinate is $i$ and the vertical coordinate is $j$.
\begin{center}
\includegraphics{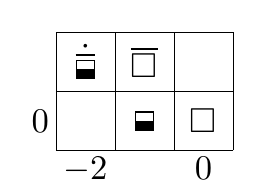}
\end{center}

\begin{theorem}\label{thm:ROGHZmoda}
The \(RO(G)\)-graded homotopy groups of \(H\mZ/a_{\lambda}\) are given by
\[
\m{B}[u_{\lambda}^{\pm 1}, u_{\lambda(2)}^{\pm 1}, u_{\lambda(4)}^{\pm 1},\dots, u_{2\sigma}^{\pm 1}].
\]
\end{theorem}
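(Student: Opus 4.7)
The approach is to combine the cofreeness of $H\mZ/a_\lambda$ with the long exact sequence from the defining cofiber sequence, using as input the known $RO(G)$-graded homotopy Mackey functors of $H\mZ$ for $G = C_{2^n}$. Since $\gcd(1, 2^n) = 1$, the family $\mathcal F_1$ consists only of the trivial subgroup, so Theorem~\ref{thm:Locality} gives that $H\mZ/a_\lambda$ is cofree. The proposition immediately preceding the theorem then implies that $u_V$ is invertible in $\m{\pi}_\star(H\mZ/a_\lambda)(G/G)$ for every orientable representation $V$; in particular the classes $u_{\lambda(2^k)}$ for $k = 0, 1, \dots, n-1$ (with $\lambda(2^{n-1}) = 2\sigma$) are all invertible. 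Thus the $RO(G)$-graded homotopy is a module over the Laurent ring $\Z[u_\lambda^{\pm 1}, u_{\lambda(2)}^{\pm 1}, \dots, u_{2\sigma}^{\pm 1}]$, and, noting that every non-power-of-two $\lambda(k)$ is related to its power-of-two partner $\lambda(2^{\nu_2(k)})$ by the scaling maps of the earlier corollaries, it suffices to compute the slice $\m{\pi}_{i + j\sigma}(H\mZ/a_\lambda)$ for $(i, j) \in \Z^2$.

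For each such $(i, j)$ I would apply Proposition~\ref{prop:LESHomotopy} with $V = i + j\sigma$ to get
\[
\m{\pi}_{i+j\sigma+\lambda}(H\mZ) \xrightarrow{a_\lambda} \m{\pi}_{i+j\sigma}(H\mZ) \to \m{\pi}_{i+j\sigma}(H\mZ/a_\lambda) \to \m{\pi}_{i+j\sigma+\lambda-1}(H\mZ) \xrightarrow{a_\lambda} \m{\pi}_{i+j\sigma-1}(H\mZ),
\]
plug in the Mackey functor values of $\m{\pi}_\star H\mZ$ for $C_{2^n}$ as in \cite{HHR}, and use the $a_\lambda$-multiplication determined by the identifications in Proposition~\ref{prop:EilenbergMacLane} together with standard Bredon-homology calculations. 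This pins down each candidate $\m{B}_{i,j}$ as the middle term of a short exact sequence of Mackey functors whose kernel and cokernel are both known, after which I would match the results entry by entry with the depicted table for $\m{B}$.

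The main obstacle is the extension analysis in the last step. In the $(i, j)$-regions where both sides of the short exact sequence are nonzero, one must decide whether the middle term is the direct sum of building blocks ($\halfbox$, $\overline{\halfbox}$, $\circ$, $\bullet$, or their duals) or the unique nonsplit extension yielding the Mackey functor $\dot{\overline{\halfbox}}$. I would resolve these either by direct Burnside-category computations of the transfers and restrictions of the $a_\lambda$-multiplication maps on the known Mackey functor values of $\m{\pi}_\star H\mZ$, or, more efficiently, by exploiting the $G$-$E_\infty$-ring structure on $H\mZ/a_\lambda$ from Corollary~\ref{cor:OperadicAlgebras}, whose multiplicative constraints together with the known products in the Bredon cohomology of a point rigidify the extension class.
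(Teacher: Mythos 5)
Your proposal follows essentially the same route as the paper: cofreeness (via Theorem~\ref{thm:Locality}) makes the orientation classes \(u_{V}\) invertible, reducing the \(RO(G)\)-graded computation to the \(i+j\sigma\)-graded part, which is then computed from the long exact sequence of Proposition~\ref{prop:LESHomotopy} with input the homotopy of the suspensions of \(H\mZ\) identified in Proposition~\ref{prop:EilenbergMacLane}. The only real difference is that you make explicit the Mackey-functor extension analysis (e.g.\ distinguishing split sums from \(\dot{\overline{\halfbox}}\)) that the paper leaves implicit in its appeal to Proposition~\ref{prop:EilenbergMacLane}, which is a sensible elaboration rather than a different method.
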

\begin{proof}
Since multiplication by \(u_V\) is invertible, for any orientable \(V\), we have
\[
\m{\pi}_V\big(H\mZ/a_{\lambda(1)}\big)\cong\m{\pi}_{\dim V}\big(H\mZ/a_{\lambda(1)}\big).
\]
This means that
\[
\m{\pi}_{\star}\big(H\mZ/a_{\lambda(1)}\big)\cong
\Big(\m{\pi}_\ast \big(H\mZ/a_{\lambda(1)}\big)\oplus \m{\pi}_{\ast+\sigma}\big(H\mZ/a_{\lambda(1)}\big)\Big)[u_{\lambda(1)}^{\pm 1}, \dots, u_{\lambda(2^{n-1})}^{\pm 1}].
\]
The statement of the Theorem is then that as graded Mackey functors,
\[
\mB\cong \m{\pi}_\ast \big(H\mZ/a_{\lambda(1)}\big)\oplus \m{\pi}_{\ast+\sigma}\big(H\mZ/a_{\lambda(1)}\big).
\]

We apply Proposition~\ref{prop:LESHomotopy} to compute these. For this, we need to determine the \(\Z\)-graded homotopy groups of \(\Sigma^{-\lambda(1)}H\mZ\), of \(\Sigma^{-\sigma}H\mZ\), and of \(\Sigma^{-\lambda(1)-\sigma}H\mZ\). These are given by Proposition~\ref{prop:EilenbergMacLane}.
\end{proof}

\begin{remark}
The image of the element \(a_{\sigma}\) is non-zero in \(\m{\pi}_{\star}^{G}\big(H\mZ/a_{\lambda(1)}\big)\): this is the generator of \(\m{\pi}_{\sigma-2}^{G}\big(H\mZ/a_{\lambda(1)}\big)\), multiplied by the class \(u_{2\sigma}\). That this is a transfer is a consequence of the fact that \(a_{\sigma}^{2}=0\).
\end{remark}

\subsection{The homotopy groups of the \texorpdfstring{\(\Z\)}{Z}-homotopy fixed points of \texorpdfstring{\(\MUG\)}{MUG}}

We can now put everything together to describe the \(RO(G)\)-graded homotopy Mackey functors for the \(\Z\)-homotopy fixed points of \(\MUG\).

\begin{corollary}
The elements \(u_{V}^{\pm 1}\) and all polynomials in the \(\br_{i}\) and their conjugates are all in slice filtration zero. The minus first homotopy Mackey functor \(\mZ^{\ast}\) of the zero slice is in bidegree \((-1,1)\), while the Mackey functor including \(a_{\sigma}\) has bidegree \((-\sigma,1)\). Finally, the Mackey functor \(\mZ_{-}\) is in bidegree \((1,0)\).
\end{corollary}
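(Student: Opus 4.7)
The proof proceeds by combining the slice spectral sequence for \(\MUG\) described in the previous subsection with the cofiber sequence defining the quotient \(\MUG/a_{\lambda(1)}\), then invoking Theorem~\ref{thm:ROGHZmoda} to identify the \(E_2\)-term of the resulting spectral sequence.

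First, I would establish that the slice tower of \(\MUG\) passes cleanly through the construction \(F\bigl(S(\lambda(1))_+,-\bigr)=(-)/a_{\lambda(1)}\). Since \(S(\lambda(1))_+\) is a finite \(G\)-CW complex, mapping out of it commutes with the slice filtration, so the associated graded of the resulting tower for \(\MUG/a_{\lambda(1)}\) is obtained from the slice associated graded of \(\MUG\) by replacing each Eilenberg--Mac~Lane summand \(H\mZ\) with \(H\mZ/a_{\lambda(1)}\). Consequently, the \(E_2\)-term of the slice spectral sequence for \(\m{\pi}_\star^G\bigl(\MUG/a_{\lambda(1)}\bigr)\) takes the form
\[
\m{\pi}_\star\bigl(H\mZ/a_{\lambda(1)}\bigr)\bigl[G\cdot\br_1,\,G\cdot\br_2,\ldots\bigr].
\]

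Second, I would apply Theorem~\ref{thm:ROGHZmoda} to rewrite the coefficient ring \(\m{\pi}_\star(H\mZ/a_{\lambda(1)})\) as \(\mB[u_\lambda^{\pm 1},u_{\lambda(2)}^{\pm 1},\ldots,u_{2\sigma}^{\pm 1}]\). The orientation classes \(u_V^{\pm 1}\) and all polynomials in the \(\br_i\) and their conjugates then enter as free Laurent or polynomial generators on top of \(\mB\); they carry no extra filtration information, which is precisely the statement that they all lie in slice filtration zero.

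Finally, the assertions about bidegrees of the Mackey functor pieces are read off directly from the displayed description of \(\mB\) together with Proposition~\ref{prop:EilenbergMacLane}. The shifts \(H\mZ^{\ast}\simeq\Sigma^{2-\lambda(1)}H\mZ\) and \(H\overline{\square}\simeq\Sigma^{1-\sigma}H\mZ\) together with the long exact sequence of Proposition~\ref{prop:LESHomotopy} locate \(\mZ^{\ast}\) at bidegree \((-1,1)\) and \(\mZ_-\) at \((1,0)\); the image of the Euler class \(a_\sigma\), combined with the relation \(a_\sigma^2=0\) noted after Theorem~\ref{thm:ROGHZmoda}, fixes the carrying Mackey functor at bidegree \((-\sigma,1)\). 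The main obstacle is the first step: confirming that the slice filtration of \(\MUG\) is compatible with the quotienting by \(a_{\lambda(1)}\) in the above sense; once that compatibility is in hand, the remainder of the proof is direct bookkeeping from Theorem~\ref{thm:ROGHZmoda} and Proposition~\ref{prop:EilenbergMacLane}.
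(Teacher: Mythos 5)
Your route is essentially the paper's: the corollary is bookkeeping from the spectral sequence obtained by mapping \(S(\lambda(1))_{+}\) into the slice tower of \(\MUG\) (the slice spectral sequence for \(\MUG\)-cohomology of a finite \(G\)-space from the preceding subsection), with \(E_{2}\)-term \(\m{\pi}_{\star}\big(H\mZ/a_{\lambda(1)}\big)[G\cdot\br_{1},G\cdot\br_{2},\dots]\) identified via Theorem~\ref{thm:ROGHZmoda} and Proposition~\ref{prop:EilenbergMacLane}. One caution about your first step: the assertion that mapping out of \(S(\lambda(1))_{+}\) ``commutes with the slice filtration'' is stronger than what you actually use, and taken literally it is false -- cotensoring with a \(G\)-space preserves slice \(\leq n\) but not slice \(\geq n\), and the paper states explicitly that the slice associated graded of \(\MUG/a_{\lambda(1)}\) does \emph{not} agree with \(F\big(S(\lambda(1))_{+},-\big)\) applied to the slice associated graded of \(\MUG\). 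All you need, and all the paper uses ``for convenience,'' is that mapping into the tower for \(\MUG\) gives a strongly convergent spectral sequence with the stated \(E_{2}\); the bidegrees \((-1,1)\) for \(\mZ^{\ast}\) and \((-\sigma,1)\) for the Mackey functor containing \(a_{\sigma}\) refer to the filtration of that tower. In the genuine slice spectral sequence of \(\MUG/a_{\lambda(1)}\) those two classes move to filtration zero (see the paper's later discussion of the slice tower of \(H\mZ/a_{\lambda(1)}\) and of \(\MUG/a_{\lambda(1)}\)), so if your first step were taken at face value it would contradict the very bidegree claims you go on to read off; phrase it instead as a statement about the mapped-in tower and the rest of your argument is exactly the intended one.
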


\begin{theorem}
The slice spectral sequence collapses at \(E_{2}\) with no extensions.
\end{theorem}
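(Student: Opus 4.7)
The plan is to combine the explicit \(E_2\)-description from the preceding corollary with a comparison against the classical slice spectral sequence for \(\MUG\). The \(E_2\)-page is the tensor product of the polynomial algebra \(\mZ[G\cdot\br_1,G\cdot\br_2,\dots]\) (sitting in slice filtration zero) with the \(RO(G)\)-graded coefficient ring \(\mB[u_{\lambda}^{\pm 1},u_{\lambda(2)}^{\pm 1},\dots,u_{2\sigma}^{\pm 1}]\) of \(H\mZ/a_{\lambda(1)}\) computed in Theorem~\ref{thm:ROGHZmoda}. Since the slice spectral sequence is multiplicative and its differentials are derivations, it suffices to check that each \(d_r\) vanishes on the polynomial generators \(\br_i\) (and their conjugates) and on the invertible classes \(u_V^{\pm 1}\); the factors coming from \(\mB\) already live in positive filtration and can only appear as targets.

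For the collapse, the key input is the vanishing \(a_{\lambda(\ell)}=0\) in \(\pi_\star(\MUG/a_{\lambda(1)})\) for every \(\ell\), which is immediate from the corollary following Corollary~\ref{cor:Modules} together with \(\gcd(1,|G|)=1\). I would then appeal to naturality along the unit map \(\MUG\to\MUG/a_{\lambda(1)}\): each HHR-type differential on an algebra generator in the classical \(\MUG\)-slice spectral sequence has a target of the form
\[
a_\sigma^{i}\,a_{\lambda(k_1)}^{j_1}\cdots a_{\lambda(k_s)}^{j_s}\cdot(\text{monomial in the }\br_i)
\]
with at least one strictly positive \(a_{\lambda(k)}\)-exponent. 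All such targets become zero under the comparison, so every differential on an algebra generator vanishes, and the Leibniz rule propagates this to the whole \(E_2\)-page.

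For the absence of hidden extensions, I would leverage the invertibility of the \(u_V\)-classes on \(E_\infty=E_2\). The corresponding \(u_V\)-classes in \(\pi_\star(\MUG/a_{\lambda(1)})\) are themselves invertible, by naturality of the multiplicative structure, which forces a multiplicative splitting of the slice filtration identifying the graded ring with the \(E_2\)-page and leaves no room for hidden extensions. The main obstacle is verifying the shape-of-differentials claim for general \(G=C_{2^n}\). For \(\MUR\) this is transparent, since \(\lambda(1)=2\sigma\) and the classical HHR slice differentials all have \(a_\sigma\)-exponent at least \(2\) on their targets, and so factor through \(a_{\lambda(1)}=a_\sigma^2\); for higher cyclic \(2\)-groups it is a genuine structural input about the slice spectral sequence for \(\MUG\), which one would either extract inductively via the norm structure on \(\MUG\) or invoke from the literature on slice differentials for the hyperreal bordism spectra.
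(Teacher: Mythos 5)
There is a genuine gap, and the route you chose is much harder than what the statement actually requires. The decisive fact, recorded in the corollary immediately preceding the theorem (and coming from Theorem~\ref{thm:ROGHZmoda}), is that the entire \(E_{2}\)-page is concentrated in slice filtrations \(0\) and \(1\): the units \(u_{V}^{\pm1}\) and all polynomials in the \(G\cdot\br_{i}\) sit in filtration \(0\), and the remaining pieces of \(\mB\) sit in filtration \(1\). Since \(d_{r}\) raises filtration by \(r\geq 2\), no class can support \emph{or} receive a differential, so the spectral sequence collapses for purely positional reasons; and since the only torsion lives in filtration \(1\) and there are only two filtration layers, there are no additive extensions. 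Your observation that the \(\mB\)-factors ``live in positive filtration and can only appear as targets'' is exactly half of this point, but filtration-\(1\) classes cannot be targets of \(d_{r}\), \(r\geq 2\), either; had you pushed that observation one step further you would have obtained the whole theorem without any input about differentials in the slice spectral sequence of \(\MUG\).

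As written, your comparison argument does not close. Naturality along \(\MUG\to\MUG/a_{\lambda(1)}\) only controls the image of a class for as long as its preimage survives: if \(u_{2\sigma}^{2^{k-1}}\) supports \(d_{r}\) in the source with target divisible by some \(a_{\lambda(k')}\), you may conclude that its image survives to \(E_{r+1}\) of the quotient spectral sequence, but once the source class has died you have no control over longer differentials on the image, so you have not shown the \(u_{V}\) are permanent cycles. Patching this requires knowing the relevant target groups vanish, i.e.\ the filtration argument above, at which point the naturality machinery is superfluous. In addition, the structural input you flag yourself---that \emph{every} differential on an algebra generator in the \(\MUG\) slice spectral sequence for general \(C_{2^{n}}\) has target divisible by some \(a_{\lambda(k)}\)---is not established (the known slice differential theorem \cite[Theorem 9.9]{HHR} covers only the powers of \(u_{2\sigma}\), not the full differential structure), and the claim that invertibility of the \(u_{V}\) ``forces a multiplicative splitting of the slice filtration'' is asserted rather than proved; the paper's extension claim instead follows from the two-layer filtration with torsion only in filtration \(1\) (with the Mackey-functor-level statement checked separately by an Ext computation).
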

\begin{proof}
The entire spectral sequence is concentrated in filtrations zero and one. This gives the collapse of the spectral sequence, and \(E_{2}=E_{\infty}\). Since the only torsion classes are in filtration 1, there are no possible additive extensions.
\end{proof}

In fact, there are no extensions in the category of Mackey functors. This can be seen by computing the relevant Ext groups.

\begin{corollary}
The \(RO(G)\)-graded homotopy of \(\MUG/a_{\lambda(1)}\) is given by
\[
\mB[u_{\lambda(1)}^{\pm 1}, u_{\lambda(2)}^{\pm 1}, u_{\lambda(4)}^{\pm 1},\dots, u_{2\sigma}^{\pm 1}]\big[G\cdot \br_{1},\dots\big].
\]
\end{corollary}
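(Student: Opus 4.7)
The strategy is to feed the computation of Theorem~\ref{thm:ROGHZmoda} into the general slice spectral sequence for \(\MUG\)-cohomology and then invoke the collapse result immediately preceding this corollary. Since \(\MUG/a_{\lambda(1)}\simeq F\big(S(\lambda(1))_{+},\MUG\big)\), computing \(\m{\pi}_{\star}(\MUG/a_{\lambda(1)})\) is exactly computing \(\m{\MUG}^{-\star}(S(\lambda(1))_{+})\). The corollary in the previous subsection therefore supplies a strongly convergent spectral sequence of \(RO(G)\)-graded Mackey functors
\[
E_{2}=\m{H}^{\star}(S(\lambda(1))_{+};\mZ)\big[G\cdot\br_{1},G\cdot\br_{2},\dots\big]\Longrightarrow \m{\MUG}^{\star}(S(\lambda(1))_{+}).
\]

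The first step is to identify the coefficient Mackey functor in front. By definition of the function spectrum,
\[
\m{H}^{-\star}(S(\lambda(1))_{+};\mZ)=\m{\pi}_{\star}F\big(S(\lambda(1))_{+},H\mZ\big)=\m{\pi}_{\star}\big(H\mZ/a_{\lambda(1)}\big),
\]
and Theorem~\ref{thm:ROGHZmoda} identifies this with \(\mB[u_{\lambda(1)}^{\pm 1},u_{\lambda(2)}^{\pm 1},\dots,u_{2\sigma}^{\pm 1}]\). For a non-fixed monomial orbit \(\bar{p}\) with stabilizer \(H=H_{\bar{p}}\), the corresponding summand of the \(E_{2}\)-page is \(\Ind_{H}^{G}\m{H}^{\star-\|\bar{p}\|}\bigl(i_{H}^{\ast}S(\lambda(1))_{+};\mZ\bigr)\). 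The restriction \(i_{H}^{\ast}\lambda(1)\) is exactly \(\lambda_{H}(1)\) (the faithful irreducible rotation representation of \(H\)), so Theorem~\ref{thm:ROGHZmoda} applied to the subgroup \(H\) computes this induced piece as the appropriate shift of \(\mB_{H}\). The polynomial-ring notation \([G\cdot\br_{1},G\cdot\br_{2},\dots]\) is precisely the bookkeeping device that assembles these induced summands over all monomial orbits.

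The second step is to observe that the preceding theorem guarantees \(E_{2}=E_{\infty}\) with no additive extensions; combined with the vanishing of the relevant \(\Ext^{1}\) groups between the Mackey functors appearing in filtration zero and filtration one (all of which are either \(\mZ\), \(\mZ^{\ast}\), or \(\mZ_{-}\)), this upgrades to the absence of extensions in the category of Mackey functors. Assembling the pieces yields the stated formula
\[
\m{\pi}_{\star}(\MUG/a_{\lambda(1)})\cong \mB[u_{\lambda(1)}^{\pm 1},u_{\lambda(2)}^{\pm 1},\dots,u_{2\sigma}^{\pm 1}]\bigl[G\cdot\br_{1},G\cdot\br_{2},\dots\bigr].
\]

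The only genuine bookkeeping challenge is the bidegree accounting: one must check that the filtration-zero Mackey functors in \(\mB\) (which sit in bidegrees \((0,0)\), \((-\sigma,1)\), etc.) line up correctly with the slice-filtration indexing of the \(E_{2}\)-page, and in particular that the transfer-carrying class recording \(a_{\sigma}\) (noted in the remark after Theorem~\ref{thm:ROGHZmoda}) lands in the expected position. Once this matching is verified, no further computation is required.
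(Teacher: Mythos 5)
Your proposal follows the paper's own route: the corollary is read off by feeding Theorem~\ref{thm:ROGHZmoda} into the \(E_{2}\)-term of the slice spectral sequence for \(\MUG\)-cohomology of \(S(\lambda(1))_{+}\) (equivalently, the spectral sequence computing \(\m{\pi}_{\star}(\MUG/a_{\lambda(1)})\)) and then invoking the immediately preceding collapse-at-\(E_{2}\)-with-no-extensions theorem, exactly as you do. The only quibble is your parenthetical that the Mackey functors in filtrations zero and one are all \(\mZ\), \(\mZ^{\ast}\), or \(\mZ_{-}\) — the part of \(\mB\) carrying \(a_{\sigma}\) involves further Mackey functors — but this is immaterial, since the absence of additive extensions already follows (as in the paper) from the fact that all torsion sits in filtration one.
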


\begin{corollary}
All orientable representations are orientable for \(\MUG/a_{\lambda(1)}\): if \(V\) is an orientable representation, then multiplication by \(u_{V}\) gives an equivariant equivalence
\[
\Sigma^{V}\MUG/a_{\lambda(1)}\xrightarrow{u_{V}} \Sigma^{\dim V}\MUG/a_{\lambda(1)}
\]
\end{corollary}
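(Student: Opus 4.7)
The plan is to apply the cofreeness argument that established the analogous statement for $H\mZ/a_{\lambda(1)}$ (used in the proof of Theorem~\ref{thm:ROGHZmoda}). First, the spectrum $\MUG/a_{\lambda(1)}$ is cofree: this follows either from the identification $\MUG/a_{\lambda(1)}\simeq \MUG^{hn\Z}$ together with the theorem that $E^{hn\Z}$ is cofree, or directly from the $k=1$ case of Theorem~\ref{thm:Locality}, in which $\mathcal F_1$ is the family of only the trivial subgroup and the canonical map
\[
\MUG/a_{\lambda(1)}\to F(EG_+,\MUG/a_{\lambda(1)})
\]
is a weak equivalence.

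Second, cofreeness implies that any $G$-equivariant self-map of $\MUG/a_{\lambda(1)}$ that is an equivalence on underlying spectra is automatically a $G$-equivariant equivalence. Concretely, for any cofree $G$-spectrum $E$ we have $E\simeq F(EG_+,E)$, and the equivariant homotopy groups of $F(EG_+,E)$ are determined by the underlying homotopy together with its (homotopy) $G$-action. It therefore suffices to check that multiplication by $u_V$ is an underlying equivalence.

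Third, the underlying statement is formal. For an orientable real representation $V$, the Thom class $u_V$ is constructed so that its underlying restriction corresponds to the canonical trivialization $S^V\simeq S^{\dim V}$ afforded by the orientation; smashing this equivalence with the underlying spectrum of $\MUG/a_{\lambda(1)}$ exhibits multiplication by $u_V$ as a non-equivariant self-equivalence. Combined with the previous step, this gives the desired equivariant equivalence $\Sigma^V\MUG/a_{\lambda(1)}\xrightarrow{u_V}\Sigma^{\dim V}\MUG/a_{\lambda(1)}$.

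There is no serious obstacle once cofreeness is invoked; the argument for $H\mZ/a_{\lambda(1)}$ goes through verbatim because $\MUG/a_{\lambda(1)}$ shares the property that matters, namely cofreeness. The only thing worth emphasizing is that a direct attack via the ring description $\m{B}[u_{\lambda(1)}^{\pm 1},\ldots,u_{2\sigma}^{\pm 1}][G\cdot\bar{r}_1,\ldots]$ would be considerably more awkward: for $V=\lambda(k)$ with $k$ not a power of $2$ (e.g.\ $k=3$ when $G=C_8$), exhibiting $u_{\lambda(k)}$ as a unit in that presentation would require chasing the odd-degree $\ell$-power maps $S^{\lambda(k)}\to S^{\lambda(k\ell)}$ through the Mackey functor $\m{B}$, which the cofreeness argument sidesteps entirely.
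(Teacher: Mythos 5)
Your cofreeness reduction (steps 1 and 2) is sound, and it is exactly the argument the paper uses for the analogous proposition about \(H\mZ/a_{\lambda(1)}\); the problem is step 3. Before one can ask whether ``multiplication by \(u_V\)'' is an underlying equivalence, one needs \(u_V\) to exist as a genuine equivariant homotopy class in \(\m{\pi}_{\dim V-V}\big(\MUG/a_{\lambda(1)}\big)\) (equivalently, as an equivariant map \(\Sigma^V\MUG/a_{\lambda(1)}\to\Sigma^{\dim V}\MUG/a_{\lambda(1)}\)) whose underlying restriction is the orientation generator. For \(H\mZ\) this is free of charge: \(u_V\) is by definition a class in \(H_{\dim V}(S^V;\mZ)\cong\Z\), and one simply pushes it into the quotient. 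But there is no class \(u_V\) in \(\pi_{\star}\MUG\) to push forward---for \(\MUG\) the orientation classes live only on the \(E_2\)-page of the slice spectral sequence and famously support differentials (e.g.\ the differentials on powers of \(u_{2\sigma}\) in HHR). Their existence in the homotopy of \(\MUG/a_{\lambda(1)}\) is precisely the content of the results the paper proves immediately beforehand: the identification of the \(E_2\)-term with everything concentrated in filtrations \(0\) and \(1\), the observation that the \(u_V^{\pm1}\) sit in filtration zero, and the collapse of the slice spectral sequence at \(E_2\), which together yield \(\m{\pi}_{\star}(\MUG/a_{\lambda(1)})\cong\mB[u_{\lambda(1)}^{\pm1},\dots,u_{2\sigma}^{\pm1}][G\cdot\br_1,\dots]\), in which the \(u_V\) are visibly units. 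Your proposal skips this and thereby assumes the hardest part of the statement.

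That this is not a formality can be seen already for \(G=C_2\) with the cofree spectrum \(K_{\R}\) (cofreeness is asserted in the paper's introduction): the representation \(2\sigma\) is orientable and trivializes non-equivariantly, yet \(\Sigma^{2\sigma}K_{\R}\not\simeq\Sigma^{2}K_{\R}\), since by \(\rho\)-periodicity this would force \(\Sigma^{4}KO\simeq KO\). Concretely, \(\pi^{C_2}_{2-2\sigma}K_{\R}\cong KO_4\cong\Z\) and its restriction to \(\pi_{2-2\sigma}KU\cong\Z\) has image of index \(2\), so no equivariant class restricts to the underlying orientation unit and ``multiplication by \(u_{2\sigma}\)'' does not even exist as an equivariant map. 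So ``cofree \(+\) orientable \(+\) underlying trivialization'' does not by itself give the conclusion; one must first produce \(u_V\) as a permanent cycle for \(\MUG/a_{\lambda(1)}\), which is what the collapse theorem supplies. Once that is in hand, your steps 1--2 (or simply reading off invertibility from the computed ring, as the paper implicitly does) finish the proof.
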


This means in particular, that the \(RO(G)\)-graded homotopy groups are especially simple: we see only the \(\Z\)-graded homotopy Mackey functors (corresponding to trivial representations) and the \(\Z\)-graded homotopy Mackey functors of a single sign suspension. Put another way, the image of \(RO(G)\) in \(\Pic(\MUG/a_{\lambda(1)})\) is fairly small.
\begin{corollary}
The image of \(RO(C_{2^{n}})\) in \(\Pic(\MUG/a_{\lambda(1)})\) is 
\[
\Z\cdot\{1\}\oplus\Z/2\cdot\{1-\sigma\}.
\] 
Every orientable virtual representation acts as its virtual dimension.
\end{corollary}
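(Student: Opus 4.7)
The plan is to exploit the previous corollary to collapse the image onto a small generating set, and then confirm no further collapse occurs by direct inspection of the $RO(G)$-graded homotopy. The second sentence of the statement is a direct restatement of the previous corollary, so what remains is to identify the image precisely.

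First, I would fix an additive basis of $RO(C_{2^n})$ consisting of the trivial representation $1$, the sign representation $\sigma$, and the two-dimensional representations $\lambda(k)$ for representatives $k$ of the conjugate pairs of nontrivial $2^n$-th roots of unity. Each $\lambda(k)$ is orientable (it carries a complex structure), and $2\sigma$ is orientable, so by the preceding corollary
\[
[\lambda(k)] = 2\cdot[1] \quad\text{and}\quad [2\sigma] = 2\cdot[1]
\]
in $\Pic(\MUG/a_{\lambda(1)})$. Hence the image is generated by $[1]$ and $[\sigma]$, equivalently by $[1]$ and $[1-\sigma]$. Since $2\sigma-2$ is orientable of virtual dimension zero, $2[1-\sigma] = -[2\sigma-2] = 0$, and the image is a quotient of $\Z\cdot[1] \oplus \Z/2\cdot[1-\sigma]$.

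Next I would verify that no further relation holds. That $[1]$ has infinite order is transparent from Theorem~\ref{thm:ROGHZmoda}: the integer-graded Mackey functors of $\MUG/a_{\lambda(1)}$ in distinct integer degrees are distinguished by the non-isomorphic summands of $\mB$ (e.g.\ $\square$ versus $\overline{\square}$) together with the action of the invertible orientation classes, so suspension by distinct integers yields inequivalent spectra. To show $[1-\sigma] \neq 0$, I would observe that an equivalence $\Sigma^{1-\sigma}\MUG/a_{\lambda(1)} \simeq \MUG/a_{\lambda(1)}$ would force
\[
\m{\pi}_0(\MUG/a_{\lambda(1)}) \cong \m{\pi}_{\sigma-1}(\MUG/a_{\lambda(1)}).
\]
By Theorem~\ref{thm:ROGHZmoda} these Mackey functors correspond to different entries of $\mB[u_\star^{\pm}][G\cdot\br_i,\dots]$: the former is the $(0,0)$ entry of $\mB$, namely $\square = \mZ$, and the latter is the $(-1,1)$ entry of $\mB$, which is a different Mackey functor (of a dual type like $\halfbox$), already distinguishable by its restriction and transfer data.

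The principal obstacle is the last Mackey-functor comparison: identifying exactly which summands of $\mB$ sit at the coordinates $(0,0)$ and $(-1,1)$, and verifying by Lewis-diagram data that they are not isomorphic. Granted this---immediate from the explicit picture of $\mB$ and the standard description of its summands $\square$, $\halfbox$, $\overline{\square}$, etc.---the image is exactly $\Z\cdot\{1\} \oplus \Z/2\cdot\{1-\sigma\}$, and the second assertion of the corollary is simply the content of the previous corollary.
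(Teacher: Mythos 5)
Correct, and this is essentially the paper's (implicit) argument: reduce to the generators $[1]$ and $[1-\sigma]$ via the preceding corollary, kill $2[1-\sigma]$ using the orientable class $2-2\sigma$, and rule out further relations by comparing homotopy Mackey functors from Theorem~\ref{thm:ROGHZmoda}. One small slip: the Mackey functor in degree $\sigma-1$ (the $(-1,1)$ entry of $\mB$) is $\overline{\square}=\mZ_{-}$ rather than a dual $\halfbox$-type functor, but since all you need is that it is not isomorphic to $\mZ$ (its value at $C_{2^{n}}/C_{2^{n}}$ vanishes, and the underlying Weyl action is by sign), your argument goes through unchanged, and together with the infinite order of $[1]$ and $2[1-\sigma]=0$ this does yield the full direct-sum statement.
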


\subsubsection*{The slice filtration of \texorpdfstring{\(\MUG/a_{\lambda(1)}\)}{MUG mod a}}
The slice associated graded of \(\MUG/a_{\lambda(1)}\) does not agree with maps from \(S(\lambda(1))_{+}\) into the slice associated graded for \(\MUG\). For convenience, we used the latter, and it also has extremely nice multiplicative properties guaranteed. We sketch some of the analysis of the slice filtration for \(\MUG/a_{\lambda(1)}\).
\begin{proposition}
The slice tower for \(H\mZ/a_{\lambda(1)}\) is given by
\[
\begin{tikzcd}
H\mZ
	\ar[r]
	&
H\mZ/a_{\lambda(1)}
	\ar[d]
	&
	\\
	&
\Sigma^{-1}H\mZ^{\ast}
	\ar[r, "a_{\lambda(1)}"]
	&
\Sigma^{1}H\mZ.
\end{tikzcd}
\]
\end{proposition}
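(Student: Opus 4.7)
The plan is to construct the tower directly from the defining cofiber sequence for $H\mZ/a_{\lambda(1)}$, and then to verify that the resulting two-stage filtration really is the slice tower. Rotating the cofiber sequence $\Sigma^{-\lambda(1)}H\mZ\xrightarrow{a_{\lambda(1)}}H\mZ\to H\mZ/a_{\lambda(1)}$ one step yields
\[
H\mZ\to H\mZ/a_{\lambda(1)}\to\Sigma^{1-\lambda(1)}H\mZ\xrightarrow{a_{\lambda(1)}}\Sigma H\mZ,
\]
and Proposition~\ref{prop:EilenbergMacLane} identifies $\Sigma^{1-\lambda(1)}H\mZ$ with $\Sigma^{-1}H\mZ^{\ast}$. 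This produces exactly the diagram in the statement, with the $k$-invariant $a_{\lambda(1)}$ inherited from the original cofiber sequence.

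What remains is to verify that this two-step filtration coincides with the slice tower. Since $H\mZ$ is the prototypical pure $0$-slice, by uniqueness of the slice tower it suffices to show that $\Sigma^{-1}H\mZ^{\ast}$ has slice filtration at most $-1$. The approach I would take is to observe that $H\mZ^{\ast}$, as the Eilenberg--Mac~Lane spectrum for the fixed-point Mackey functor of a representation sphere (consistent with its identification in Proposition~\ref{prop:EilenbergMacLane}), is itself a pure $0$-slice via the slice analysis of HHR. A single desuspension then places $\Sigma^{-1}H\mZ^{\ast}$ in slice filtration $-1$, and combined with the $0$-slice status of $H\mZ$ this forces the filtration above to be the slice filtration.

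The main obstacle is precisely this slice-degree verification for $\Sigma^{-1}H\mZ^{\ast}$. If a direct appeal to HHR is not clean enough, I would fall back on building an explicit slice-cell presentation: using a minimal equivariant CW structure on $S^{\lambda(1)}$, smashing with $H\mZ$, and reading off the induced slice cells, one can exhibit $\Sigma^{-1}H\mZ^{\ast}\simeq\Sigma^{1-\lambda(1)}H\mZ$ as built from $(-1)$-slice cells of the form $G/H_{+}\wedge S^{-1}\wedge H\mZ$. This witnesses the required slice-filtration bound. Once the slice degrees are established, the diagram above is automatically the slice tower, with $H\mZ$ the $0$-slice, $\Sigma^{-1}H\mZ^{\ast}$ the $(-1)$-slice, and $a_{\lambda(1)}$ the $k$-invariant.
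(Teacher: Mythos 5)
Your argument is correct in outline and is a mildly different, somewhat more direct route than the paper's. The paper first computes the integer-graded homotopy of \(H\mZ/a_{\lambda(1)}\) (Theorem~\ref{thm:ROGHZmoda}), observes that the Postnikov tower is this two-stage tower, and then invokes the criterion that a tower whose fibers are slices in decreasing order is the slice tower (\cite[Proposition 4.45]{HHR}). You instead read the tower off the rotated defining cofiber sequence \(H\mZ\to H\mZ/a_{\lambda(1)}\to\Sigma^{1-\lambda(1)}H\mZ\xrightarrow{a_{\lambda(1)}}\Sigma H\mZ\) and use Proposition~\ref{prop:EilenbergMacLane} to rewrite the third term as \(\Sigma^{-1}H\mZ^{\ast}\); this avoids the \(RO(G)\)-graded computation and gives the identification of the \(k\)-invariant with \(a_{\lambda(1)}\) for free (the paper only makes that identification in the remark following its proof). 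The closing step is then the same in both arguments: \(H\mZ\) is a \(0\)-slice and \(\Sigma^{-1}H\mZ^{\ast}\) is a \((-1)\)-slice, so the two-stage filtration is the slice tower.

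Two cautions about how you justify the slice degrees. First, it is not true that every Eilenberg--Mac~Lane spectrum of a Mackey functor is a \(0\)-slice (already for \(C_{2}\) the Burnside Mackey functor fails: \([S^{\sigma},H\mA]^{C_{2}}\cong\ker(\res)\neq 0\), and \(S^{\sigma}=S^{\rho_{2}-1}\) is a slice cell of dimension \(1\)), so the heuristic ``EM spectrum, hence \(0\)-slice, hence desuspend'' needs the precise inputs: \(H\mZ\) is a \(0\)-slice, and \(\Sigma^{-1}H\mM\) is a \((-1)\)-slice for \emph{every} Mackey functor \(\mM\); both are part of HHR's analysis of slices in dimensions \(0\) and \(-1\), and the latter is all you need for \(\Sigma^{-1}H\mZ^{\ast}\) (it also supplies the ``slice \(\geq -1\)'' half, which your phrase ``slice filtration at most \(-1\)'' omits but which is needed to see that \(P^{-2}\) vanishes). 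Incidentally, \(\mZ^{\ast}\) is the dual of the constant Mackey functor, not a fixed-point Mackey functor of a representation sphere, though this misdescription does not affect the argument. Second, your fallback does not work as stated: the cell filtration of \(\Sigma^{1-\lambda(1)}H\mZ\) coming from a minimal cell structure on \(S^{\lambda(1)}\) has subquotients \(\Sigma H\mZ\), \(G_{+}\wedge H\mZ\), and \(G_{+}\wedge\Sigma^{-1}H\mZ\), so it is not built from \((-1)\)-slice pieces alone; the identification of this spectrum as a \((-1)\)-slice really does pass through the equivalence with \(\Sigma^{-1}H\mZ^{\ast}\) (or a homotopy-group computation) together with the HHR fact above. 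With the fallback discarded and the two precise HHR inputs cited, your primary argument is complete.
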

\begin{proof}
Theorem~\ref{thm:ROGHZmoda} shows that the Postnikov tower for \(H\mZ/a_{\lambda(1)}\) is given by this small tower. Since the fibers are all slices in order, this is also the slice tower, by \cite[Proposition 4.45]{HHR}.
\end{proof}
As an aside, the map labeled \(a_{\lambda(1)}\) is exactly that. Recall that we have a weak-equivalence
\[
H\mZ^{\ast}=\Sigma^{2-\lambda(1)}H\mZ,
\]
and so the map is just
\[
\Sigma^{-1}H\mZ^{\ast}=\Sigma^{1-\lambda(1)}H\mZ\to \Sigma H\mZ.
\]
\begin{remark}
The category of slice \(\leq n\) spectra is cotensored over \(G\)-spaces: if \(E\) is slice \(\leq n\), then for every \(G\)-space \(X\), \(F(X_{+},E)\) is also slice \(\leq n\). This is the dual of the statement that the category of slice \(\geq (n+1)\)-spectra is tensored over \(G\)-spaces (and more generally, over \((-1)\)-connected \(G\)-spectra).
\end{remark}
The argument in \cite[Section 6]{HHR} then gives a way to determine the slice tower for \(\MUG/a_{\lambda(1)}\). Since we do not need this for later arguments, we only sketch the results here. The basic insight is that the obvious monoidal ideals in the algebra \(A\) above give a filtration of \(\MUG/a_{\lambda(1)}\). The bottom stage is
\[
\big(\MUG/a_{\lambda(1)}\big)\wedge_{A}S^{0}\simeq H\mZ/a_{\lambda(1)},
\]
which is a two-stage slice tower with a zero and \(-1\)-slice. The rest of the argument goes through essentially unchanged now, showing that the filtration by these ideals coincides with a (slightly speeded up) version of the slice filtration.

\begin{proposition}
The slice associated graded for \(\MUG/a_{\lambda(1)}\) is
\[
Gr\big(\MUG/a_{\lambda(1)}\big)=A\wedge\big(\Sigma^{-1}H\mZ^{\ast}\vee H\mZ\big),
\]
where the graded degree comes from the underlying degree of the pieces in \(A\), together with a \(-1\) for the \(\Sigma^{-1}H\mZ^{\ast}\) and a \(0\) for \(H\mZ\).
\end{proposition}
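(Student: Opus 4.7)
The plan is to adapt the strategy of HHR, Section~6, using the \(A\)-module structure on \(\MUG/a_{\lambda(1)}\) inherited from \(A\to\MUG\to\MUG/a_{\lambda(1)}\). The algebra \(A\) carries an augmentation-ideal (monomial) filtration whose bottom stage is \(\mathbb{S}^{0}\), and whose associated graded is again \(A\), now graded by underlying monomial degree.

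Smashing this filtration with \(\MUG/a_{\lambda(1)}\) over \(A\) produces a filtration of \(\MUG/a_{\lambda(1)}\). Its bottom stage is
\[
\big(\MUG/a_{\lambda(1)}\big)\wedge_{A}\mathbb{S}^{0}\simeq H\mZ/a_{\lambda(1)},
\]
using that \(\MUG\wedge_{A}\mathbb{S}^{0}\simeq H\mZ\); its \(n\)th filtration quotient is \(A_{n}\wedge H\mZ/a_{\lambda(1)}\), where \(A_{n}\) denotes the wedge of slice cells \(G_{+}\wedge_{H_{\bar{p}}}S^{||\bar{p}||}\) over orbits of monomials \(\bar{p}\) of underlying degree \(n\).

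I would then refine using the two-stage slice tower for \(H\mZ/a_{\lambda(1)}\) established in the previous Proposition, which has zero-slice \(H\mZ\) and \((-1)\)-slice \(\Sigma^{-1}H\mZ^{\ast}\). Smashing this two-stage tower with each \(A_{n}\) refines \(A_{n}\wedge H\mZ/a_{\lambda(1)}\) into layers \(A_{n}\wedge H\mZ\) and \(A_{n}\wedge\Sigma^{-1}H\mZ^{\ast}\), placed in filtration degrees \(n\) and \(n-1\) respectively. The resulting combined associated graded is then exactly \(A\wedge(\Sigma^{-1}H\mZ^{\ast}\vee H\mZ)\) with the stated grading convention, coming from the monomial degree on \(A\) together with a shift by \(0\) on \(H\mZ\) and by \(-1\) on \(\Sigma^{-1}H\mZ^{\ast}\).

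The final step, and the main obstacle, is to show that this combined filtration coincides with (a speedup of) the slice filtration of \(\MUG/a_{\lambda(1)}\). The \(A_{n}\wedge H\mZ\) layers are slices of dimension \(n\) by HHR, Theorem~6.1. For the \(A_{n}\wedge\Sigma^{-1}H\mZ^{\ast}\) layers, I would rewrite \(\Sigma^{-1}H\mZ^{\ast}\simeq\Sigma^{1-\lambda(1)}H\mZ\) via Proposition~\ref{prop:EilenbergMacLane}, so each summand becomes
\[
G_{+}\wedge_{H_{\bar{p}}}S^{||\bar{p}||+1-\lambda(1)}\wedge H\mZ,
\]
and then verify that \(S^{||\bar{p}||+1-\lambda(1)}\) is a slice cell of dimension \(n-1\). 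Since \(\lambda(1)\) restricts to a faithful two-dimensional representation of every stabilizer \(H\) arising from a non-trivial monomial, the combination \(||\bar{p}||-\lambda(1)\) differs from a regular-representation multiple by a controlled low-dimensional correction; this is what lets the slice cell recognition from HHR go through essentially unchanged, and it is the sole step that requires new representation-theoretic input beyond the arguments already on the page.
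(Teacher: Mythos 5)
Your overall plan is the same as the paper's (the paper only sketches it): filter \(\MUG/a_{\lambda(1)}\) by the monomial ideals of \(A\), identify the bottom stage with \(H\mZ/a_{\lambda(1)}\) via \(\MUG\wedge_{A}S^{0}\simeq H\mZ\), refine each layer \(A_{n}\wedge H\mZ/a_{\lambda(1)}\) by the two-stage slice tower of \(H\mZ/a_{\lambda(1)}\), and then run the argument of \cite[Section 6]{HHR} to see that the resulting (re-indexed) filtration is the slice filtration. That architecture is correct and is exactly what the paper intends.

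The step that fails as written is your last one: \(S^{||\bar{p}||+1-\lambda(1)}\) is essentially never a slice cell, so no ``slice cell recognition'' can be made to apply to the layers \(G_{+}\wedge_{H_{\bar{p}}}S^{||\bar{p}||}\wedge\Sigma^{-1}H\mZ^{\ast}\). For instance, if \(H_{\bar{p}}=C_{2}\) then \(i_{C_{2}}^{\ast}\lambda(1)=2\sigma\), and \(||\bar{p}||+1-\lambda(1)=(k+1)+(k-2)\sigma\) with \(k=|\bar{p}|/2\), which is not of the form \(m\rho_{2}\) or \(m\rho_{2}-1\); no amount of ``controlled low-dimensional correction'' changes this, since faithfulness of \(i_{H}^{\ast}\lambda(1)\) is not the relevant issue. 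What you actually need is not that the sphere is a slice cell but that the layer is a \((|\bar{p}|-1)\)-slice, and this follows from facts already in \cite{HHR}: \(\Sigma^{-1}H\mZ^{\ast}\) is a \((-1)\)-slice (the \((-1)\)-slices are exactly the spectra \(\Sigma^{-1}H\m{M}\)); smashing with \(S^{k\rho_{H}}\) shifts the slice filtration by \(k|H|\), so \(\Sigma^{||\bar{p}||-1}H\mZ^{\ast}\) is a \((|\bar{p}|-1)\)-slice over \(H_{\bar{p}}\); and induction \(G_{+}\wedge_{H}(-)\) preserves slices. With that substitution, \(A\wedge H\mZ\) contributes slices in the underlying degree and \(A\wedge\Sigma^{-1}H\mZ^{\ast}\) in the underlying degree minus one, the interleaved tower has successive fibers which are slices in decreasing order, and \cite[Proposition 4.45]{HHR} (together with the reduction step you set up) identifies it with the slice tower, giving the stated associated graded.
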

Nothing really changes in our argument above except the filtrations of some of the elements. The Mackey functor \(\mZ^{\ast}\) in filtration \(1\) and degree \(-1\) moves to filtration zero, and the class whose transfer is \(a_{\sigma}\) now occurs in filtration zero.

\subsection{The Hurewicz image}
We can link the Hurewicz images of \(\MUG/a_{\lambda(1)}\) and of \(\MUG\), using the slice filtration. The ring map
\[
\MUG\to\MUG/a_{\lambda(1)}
\]
induces a map of slice spectral sequences.
\begin{corollary}
The map of spectral sequences induced by \(\MUG\to\MUG/a_{\lambda(1)}\) factors as the quotient by filtrations greater than or equal to two, followed by the inclusion induced by \(H\mZ\to H\mZ/a_{\lambda(1)}\).
\end{corollary}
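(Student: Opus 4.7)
The plan is to read off the factorization directly from the comparison of slice towers established in the preceding subsection.

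First I would note that the ring map $\MUG\to\MUG/a_{\lambda(1)}$, realized as the unit $\MUG\to F(S(\lambda(1))_+,\MUG)$, is a map of slice-filtered spectra. Indeed, the tower for $\MUG/a_{\lambda(1)}$ sketched just above is obtained by smashing the monoidal-ideal filtration of $A$ with the two-stage slice tower of $H\mZ/a_{\lambda(1)}$, and this refined tower receives a canonical map from the slice tower of $\MUG$ (which uses the trivial one-stage tower of $H\mZ$). Functoriality then furnishes a morphism of slice spectral sequences.

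Next I would compare associated gradeds. The induced map
\[
\mathrm{Gr}(\MUG)=A\wedge H\mZ\longrightarrow \mathrm{Gr}(\MUG/a_{\lambda(1)})=A\wedge\bigl(\Sigma^{-1}H\mZ^{\ast}\vee H\mZ\bigr)
\]
is $1_A\wedge j$, where $j\colon H\mZ\to H\mZ/a_{\lambda(1)}$ is the canonical map. Since $j$ factors through the $H\mZ$-summand of the refined two-stage tower, the map on associated gradeds lands entirely in the summand recorded in filtration $0$ and vanishes on the $\Sigma^{-1}H\mZ^{\ast}$-summand recorded in filtration $1$. Passing to $\m{\pi}_{\star}$ identifies the map on $E_{2}$-pages with $(1_A\wedge j)_{\star}$.

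To conclude I would combine this with Theorem~4.8: since the target spectral sequence is concentrated in filtrations $0$ and $1$, any class of filtration $\geq 2$ in $E_{2}(\MUG)$ is forced to map to zero, yielding the first factor (the quotient). The restriction of the induced map to filtrations $\leq 1$ is, by the previous paragraph, exactly $(1_A\wedge j)_{\star}$, which is the inclusion induced by $H\mZ\to H\mZ/a_{\lambda(1)}$.

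The main obstacle is matching up the filtration indexing precisely across the two sides. One must identify the ``filtration $\geq 2$'' part of $E_{2}(\MUG)$ with exactly those classes of $\m{\pi}_{\star}(A\wedge H\mZ)$ that $j_{\star}$ annihilates, i.e.\ the $a_{\lambda(1)}$-divisible classes. Once this bookkeeping is pinned down using the cofiber sequence $\Sigma^{-\lambda(1)}H\mZ\xrightarrow{a_{\lambda(1)}}H\mZ\to H\mZ/a_{\lambda(1)}$ smashed with $A$, the factorization is immediate.
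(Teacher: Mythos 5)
Your overall mechanism (a map of towers, the induced map on $E_{2}$ coming from $H\mZ\to H\mZ/a_{\lambda(1)}$ slice by slice, and the concentration of the target in filtrations $0$ and $1$) is the right one, but you are comparing against the wrong tower for the target, and that is exactly the source of the ``bookkeeping obstacle'' you defer. The corollary, and the Hurewicz-image statements that follow it, are about the spectral sequence the paper actually computes with: the one obtained by applying $F\big(S(\lambda(1))_{+},-\big)$ to the slice tower of $\MUG$. The paper is explicit that this is \emph{not} the slice tower of $\MUG/a_{\lambda(1)}$, and that the two differ precisely in the filtrations of certain classes: in the refined tower $A\wedge\big(\Sigma^{-1}H\mZ^{\ast}\vee H\mZ\big)$ the $\Sigma^{-1}H\mZ^{\ast}$ summand contributes in filtration zero (the $\mZ^{\ast}$ in degree $-1$ and the class whose transfer is $a_{\sigma}$ move down), whereas you place it in filtration $1$, which is its position in the convenient tower. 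So your associated-graded comparison mixes the two indexings, and as written it proves a statement about a different filtration than the one the corollary (and its applications) refer to. The direct argument avoids the refined tower entirely: since the target tower is $F\big(S(\lambda(1))_{+},P^{\bullet}\MUG\big)$ by construction, the map of towers is the canonical one, and on each slice the $E_{2}$-map is simply $\m{\pi}_{\star}\big(G_{+}\wedge_{H}S^{||\bar{p}||}\wedge H\mZ\big)\to\m{\pi}_{\star}\big(G_{+}\wedge_{H}S^{||\bar{p}||}\wedge H\mZ/a_{\lambda(1)}\big)$.

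The second gap is the ``inclusion'' half. You say one must identify the filtration $\geq 2$ part of $E_{2}(\MUG)$ with the $a_{\lambda(1)}$-divisible classes, but that equality is never established and is stronger than what the factorization requires (there is no reason every filtration $\geq 2$ class should be $a_{\lambda(1)}$-divisible). What is actually needed are two separate facts with separate proofs: first, everything in filtration $\geq 2$ maps to zero, which is immediate from the already-proved concentration of the target $E_{2}$ in filtrations $0$ and $1$ and does not involve divisibility at all; second, the map is injective in filtrations $0$ and $1$, for which one uses Proposition~\ref{prop:LESHomotopy} to identify the slice-wise kernel with the image of multiplication by $a_{\lambda(1)}$ and then observes that $a_{\lambda(1)}$ has slice filtration $2$, so this image cannot meet filtrations $0$ and $1$ in the range under consideration (where the $E_{2}$-term of $\MUG$ has no negative filtrations). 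Your outline contains the first point but leaves the second—the only part with real content—inside the deferred bookkeeping, and the cofiber-sequence repair you sketch at the end is in effect a retreat to the direct argument with the convenient tower, which is how the statement should be proved from the start.
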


This identification of the map of spectral sequences limits the possible size or the Hurewicz image.
\begin{corollary}
If \(x\in \pi_{k}^{G}\big(\MUG\big)\) is in the Hurewicz image, then the Hurewicz image of \(x\) in \(\pi_{k}^{G}\big(\MUG/a_{\lambda(1)}\big)\) is non-zero if and only if the slice filtration of \(x\) is at most \(1\).
\end{corollary}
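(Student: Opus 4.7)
The plan is to read the corollary off directly from the preceding identification of the map of slice spectral sequences, using strong convergence and the fact that \(\pi_\star^G\big(\MUG/a_{\lambda(1)}\big)\) is supported in slice filtrations zero and one only.

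\textbf{The only-if direction.} Suppose \(x \in \pi_k^G(\MUG)\) has slice filtration \(f \geq 2\), so that its \(E_\infty\)-representative for the \(\MUG\) slice spectral sequence lies in \(E_\infty^{f,\ast}\). By the preceding corollary, the induced map on \(E_\infty\)-pages factors through the quotient by filtrations at least two, so this representative is killed. Since the slice associated graded for \(\MUG/a_{\lambda(1)}\) is \(A \wedge \big(\Sigma^{-1}H\mZ^{\ast} \vee H\mZ\big)\), concentrated in filtrations zero and one, the target spectral sequence has no non-trivial classes of filtration at least two. By strong convergence, the image of \(x\) in \(\pi_k^G \big(\MUG/a_{\lambda(1)}\big)\) must therefore vanish.

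\textbf{The if direction.} Suppose now that \(x\) has slice filtration \(f \leq 1\). Its representative in \(E_\infty^{f,\ast}\) for the \(\MUG\) slice spectral sequence is, by the preceding corollary, mapped via the inclusion induced by \(H\mZ \to H\mZ/a_{\lambda(1)}\). The two-stage slice tower for \(H\mZ/a_{\lambda(1)}\) described above exhibits \(H\mZ\) as the \(0\)-slice, so the inclusion is split at the level of slice associated gradeds; smashing with \(A\) preserves this splitting. Hence the image of the representative of \(x\) is non-zero in \(E_\infty^{f,\ast}\) for \(\MUG/a_{\lambda(1)}\). Because the slice spectral sequence for \(\MUG/a_{\lambda(1)}\) collapses at \(E_2\) without additive extensions, strong convergence then implies that the image of \(x\) in \(\pi_k^G\big(\MUG/a_{\lambda(1)}\big)\) is non-zero.

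\textbf{Main obstacle.} The only delicate point is the split-injectivity of the map induced by \(H\mZ \to H\mZ/a_{\lambda(1)}\) at the level of slice \(E_\infty\)-pages; the rest is direct bookkeeping. That split-injectivity is a consequence of the explicit two-stage slice tower for \(H\mZ/a_{\lambda(1)}\), in which \(H\mZ\) appears as the \(0\)-slice, so the obstruction dissolves on inspection.
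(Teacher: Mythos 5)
Your argument is correct and is essentially the paper's own (the paper treats this corollary as immediate from the preceding identification of the map of spectral sequences together with the collapse and filtration-$\leq 1$ concentration of the target, plus strong convergence for the finite complex \(S(\lambda(1))\)); you have simply spelled out the convergence bookkeeping. One small caution: the concentration in filtrations zero and one that you need is the stated collapse theorem for the spectral sequence the paper actually uses, namely \(F\big(S(\lambda(1))_{+},-\big)\) applied to the slice tower of \(\MUG\), rather than the genuine slice associated graded \(A\wedge\big(\Sigma^{-1}H\mZ^{\ast}\vee H\mZ\big)\) you invoke (the paper explicitly warns these differ), but since both are concentrated in filtrations at most one, and injectivity in filtrations \(\leq 1\) is exactly the content of the word ``inclusion'' in the preceding corollary, your proof goes through.
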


\begin{corollary}
The Hurewicz image of \(E_{\R}(n)^{h\Z}\) is the same as that of
\[
K_{SC}=K_{\R}^{h\Z}.
\]
\end{corollary}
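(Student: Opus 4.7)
The plan is to reduce the statement to the preceding Corollary's characterization of Hurewicz images via slice filtration. Both \(E_{\R}(n)^{h\Z} = E_{\R}(n)/a_{\lambda(1)}\) and \(K_{SC} = K_{\R}/a_{\lambda(1)}\) are instances of the Euler-class quotient construction studied in this section, and each of \(E_{\R}(n)\) and \(K_{\R}\) is a Real-oriented quotient of \(\MUR\) to which the slice machinery applies. First I would run the slice-filtration analysis carried out for \(\MUG/a_{\lambda(1)}\) verbatim with \(E_{\R}(n)\) (respectively \(K_{\R}\)) in place of \(\MUG\): since these spectra are obtained from \(\MUR\) by quotienting the polynomial algebra \(A\) by the ideal generated by the orbits of \(\br_{i}\) with \(i > n\) (and by inverting a single generator in the Johnson--Wilson case), the slice spectral sequence of their \(a_{\lambda(1)}\)-quotients again collapses with nontrivial classes concentrated in filtrations \(0\) and \(1\). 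Applying the preceding Corollary then tells us that a Hurewicz class \(x \in \pi_{*}^{C_{2}}(E_{\R}(n))\) has non-zero image in \(\pi_{*}^{C_{2}}(E_{\R}(n)^{h\Z})\) iff its slice filtration is at most \(1\), and analogously for \(K_{SC}\).

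Second, I would compare these filtration \(\leq 1\) portions directly. The slice associated graded for \(E_{\R}(n)\) is a (localized) polynomial algebra on the \(C_{2}\)-orbits of \(\br_{1}, \ldots, \br_{n}\) with \(\|\br_{i}\| = (2^{i}-1)\rho_{2}\); in particular every \(\br_{i}\) sits in strictly positive slice filtration, and products of distinct \(\br_{i}\)'s only increase this filtration. Consequently, the filtration \(\leq 1\) part of the \(E_{2}\)-page relevant for detecting classes coming from the sphere spectrum is already determined by the two-stage slice tower for \(H\mZ/a_{\lambda(1)}\) computed in Theorem~\ref{thm:ROGHZmoda}, together with the single orbit of \(\br_{1}\). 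This data is identical for \(E_{\R}(n)\) and for \(K_{\R} = E_{\R}(1)\).

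Combining the two steps, the Hurewicz images of \(\pi_{*}^{C_{2}}(S^{0})\) in both \(E_{\R}(n)^{h\Z}\) and \(K_{SC}\) are identified with the image in the common filtration \(\leq 1\) piece, which is independent of \(n\). The main obstacle is the careful bookkeeping of slice filtrations in step two: one must verify that no product of higher generators \(\br_{i}\) with \(i \geq 2\), when multiplied by an \(RO(G)\)-graded class such as \(u_{V}^{\pm 1}\) or \(a_{\sigma}\) in the \(\m{B}\)-module structure, can migrate into filtration \(\leq 1\). This amounts to a dimension-counting check using the explicit bidegrees in the description of \(\m{B}[u_{\lambda(1)}^{\pm 1}, \ldots, u_{2\sigma}^{\pm 1}]\), after which the identification of the two Hurewicz images follows.
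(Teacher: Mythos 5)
Your proposal follows essentially the same route as the paper, which deduces this corollary directly from the preceding filtration-\(\leq 1\) criterion together with the observation that the filtration \(\leq 1\) part of the relevant slice data (governed by \(H\mZ/a_{\lambda(1)}\) and the orbit of \(\br_{1}\), compared naturally via the Real orientations from \(\MUR\)) is independent of the height \(n\). The only caveat worth recording is that \(K_{\R}\simeq E_{\R}(1)\) only after \(2\)-localization, but since both Hurewicz images are \(2\)-primary in positive degrees this does not affect the conclusion.
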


For larger groups, the Hurewicz image does grow, although this is still a very harsh condition. The Hurewicz image is actually quite small, since in fact, we shall see this bounds the Adams--Novikov filtration of a class in the Hurewicz image by \(1\). For this, we recall a result of Ullman's thesis.
\begin{theorem}[\cite{UllmanThesis}]
The slice and homotopy fixed point spectral sequences agree below the line of slope \(1\) through the origin.
\end{theorem}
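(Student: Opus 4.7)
The approach is to directly compare the slice tower \(\{P^n E\}\) of a \(G\)-spectrum \(E\) with the homotopy fixed point tower \(\{F(\mathrm{sk}_n EG_+, E)\}\) by constructing a natural transformation of filtered towers. Since both spectral sequences converge to \(\pi^G_\ast E\) when \(E\) is cofree (which we may assume without loss of generality by replacing \(E\) with \(F(EG_+,E)\), to which both filtrations are insensitive in the relevant range), the claim concerns an identification of associated graded pieces and hence of \(E_2\)-pages and differentials. The comparison map itself is induced by the fact that the skeleta \(\mathrm{sk}_n EG_+\) have slice filtration bounded in a predictable way, giving a canonical map from the homotopy fixed point tower into a reindexed slice tower.

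First, I would invoke the structure theorem for slices: each \(n\)-slice \(P^n_n E\) is a wedge of pieces of the form \(G/H_+ \wedge S^V \wedge H\m{M}\), where \(V = k\rho_H\) with \(k|H| = n\), or \(V = k\rho_H - 1\) with \(k|H| = n+1\), and \(\m{M}\) is an \(H\)-Mackey functor. I would then locate each such summand in the HFPSS, tracking how it contributes to \(\pi^G_\ast E\). For a summand with \(H = G\), the piece \(S^V \wedge H\m{M}\) is just a Postnikov layer of the underlying spectrum with the full \(G\)-action, and on these pieces the slice and HFPSS filtrations agree tautologically. For a summand with proper stabilizer \(H \subsetneq G\), the contribution factors through the transfer from \(H\) to \(G\), and a class in the image of such a transfer is detected in HFPSS filtration bounded below by a quantity depending on \(|G/H|\).

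Second, I would verify that the proper-stabilizer summands land on or above the slope \(1\) line through the origin. The stem of a contribution from a slice cell \(G/H_+ \wedge S^{k\rho_H - \epsilon} \wedge H\m{M}\) is at most \(k|H| - \epsilon\), while its HFPSS filtration is at least the cohomological degree forced by the transfer, which grows with \(|G/H|\). Numerical bookkeeping then confirms that \((\text{stem},\text{filtration})\) satisfies \(s \geq t-s\) for every \(H \subsetneq G\). Consequently, in the complementary region \(s < t-s\) only the \(H = G\) slice cells contribute, and on these the two spectral sequences agree not only on the \(E_2\)-page but also on all \(d_r\) differentials, since both are induced by the same tower-of-fibrations data.

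The main obstacle is the sharpness of the numerical estimate in the second step. The bound on HFPSS filtration of a transferred class must be exactly tight enough to place every proper-stabilizer contribution above the slope \(1\) line, and the borderline case is the \(\epsilon = 1\) slice cells \(G/H_+ \wedge S^{k\rho_H - 1}\), whose slice filtration is one less than the underlying cell dimension. A careful analysis — essentially using that \(\Sigma^{-1}(G/H)_+\) has a known connectivity profile in the HFPSS — is needed to confirm these do not stray below the line. Once this estimate is established, the comparison on the remaining (trivial-stabilizer) pieces is essentially formal, and the identification of differentials follows from naturality of the comparison of towers.
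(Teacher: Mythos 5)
The paper does not prove this statement at all---it is quoted from Ullman's thesis---so your proposal has to be measured against Ullman's argument rather than anything in the text. The central gap is the ``structure theorem for slices'' on which your whole first step rests: for a general \(G\)-spectrum there is no such theorem. The \(n\)-slices of an arbitrary spectrum are \emph{not} wedges of pieces \(G/H_{+}\wedge S^{k\rho_{H}}\wedge H\m{M}\) or \(G/H_{+}\wedge S^{k\rho_{H}-1}\wedge H\m{M}\); a description of this kind is available only in a couple of low degrees (e.g.\ zero-slices are Eilenberg--Mac\ Lane spectra of Mackey functors) or for special spectra such as \(\MUG\), where it is the content of the Hill--Hopkins--Ravenel Slice Theorem rather than a formal fact about the slice filtration. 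Since your comparison proceeds summand by summand through this decomposition, the argument does not get started for the general spectra to which the statement is applied.

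Even granting such a decomposition, the two estimates you build on it are not correct. A summand \(S^{k\rho_{G}}\wedge H\m{M}\) is not a Postnikov layer---its homotopy Mackey functors are spread across a whole range of degrees---so its fixed-point and homotopy-fixed-point contributions do not agree ``tautologically''; comparing them is precisely the nontrivial content of the theorem. And induced summands \(G_{+}\wedge_{H}(-)\) are not pushed into high HFPSS filtration by the transfer: by the Wirthm\"uller isomorphism and Shapiro's lemma their contribution is computed by \(H^{s}(H;-)\), which can perfectly well be nonzero in filtration \(0\). The mechanism that actually produces the slope-one line is a coconnectivity estimate on geometric fixed points: if \(W\) is slice \(\leq n\), then \(\Phi^{H}W\) is concentrated in degrees at most roughly \(n/|H|\) for nontrivial \(H\), so isotropy separation shows that \(W^{G}\to F(EG_{+},W)^{G}\) is an isomorphism on \(\pi_{k}\) for \(k\) above roughly \(n/2\). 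Applied to the \(n\)-slices and combined with the natural map of towers from the slice tower to \(F(EG_{+},-)\) of the Postnikov tower (which exists because slice \((n+1)\)-connective spectra are \(n\)-connected), this gives agreement of the two spectral sequences exactly below the line of slope one. That is Ullman's route, and it needs no structural decomposition of the slices; if you want to salvage your outline, the step to replace is the appeal to the structure theorem, substituting these connectivity and coconnectivity bounds.
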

All of the classes we are considering are in this range, so it suffices to consider the homotopy fixed point statement. For this, recall that we have a comparison map between the Adams--Novikov and homotopy fixed point spectral sequences for hyperreal spectra.
\begin{theorem}[{\cite[Section 11.3.3]{HHR}}]
There is a natural map of spectral sequences from the Adams--Novikov spectral sequence computing the homotopy groups of the homotopy fixed points to the homotopy fixed point spectral sequence.
\end{theorem}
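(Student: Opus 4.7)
The plan is to construct the comparison via a bifiltration that simultaneously refines both the Adams--Novikov and skeletal filtrations, using that \(\MUG\) is a genuinely equivariant \(E_\infty\)-ring spectrum.

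First, I would build an equivariant Adams--Novikov tower for \(E\) from the augmentation \(\eta\colon S^0 \to \MUG\): writing \(\bar{I} = \hofib(\eta)\), set \(X^s = \bar{I}^{\wedge s}\) so that the \(s\)-th layer of the tower \(\{X^s \wedge E\}\) is \(\MUG \wedge \bar{I}^{\wedge s} \wedge E\). On underlying spectra this recovers the classical \(MU\)-based Adams tower. Applying \(F(EG_+, -)\) termwise produces a tower whose associated spectral sequence converges to \(\pi_*^G F(EG_+, E) \cong \pi_* E^{hG}\), with \(E_2\)-page the usual \(\Ext_{MU_* MU}(MU_*, MU_*(E^{hG}))\); this is the ANSS for the homotopy fixed points.

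Second, the HFPSS for \(E\) arises instead from the skeletal filtration on \(EG_+\), producing the tower \(\{F(EG^{(p)}_+, E)\}\). To construct the comparison, I would consider the bifiltered object \(F(EG^{(p)}_+, X^s \wedge E)\) with total filtration \(s + p\). Collapsing the \(p\)-filtration first recovers the ANSS above, while collapsing the \(s\)-filtration first recovers the HFPSS. Standard bifiltration machinery then produces the desired natural map of spectral sequences between the two edges, factoring through the total filtration.

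The main obstacle will be identifying the \(E_2\)-pages correctly and verifying convergence: in particular, that the equivariant Adams tower, after taking homotopy fixed points, recovers the classical ANSS \(E_2\)-page, and that the induced map of \(E_2\)-pages is the expected natural one. The cofreeness and multiplicative structure of \(\MUG\) developed in the preceding sections make this feasible, but the compatibility between the equivariant and non-equivariant cobar structures is essentially the content of \cite[Section 11.3.3]{HHR}, to which we refer for the detailed construction.
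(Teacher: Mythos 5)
The paper offers no proof of this statement: it is imported verbatim from \cite[Section 11.3.3]{HHR}, so your sketch has to be measured against the cited construction rather than against an argument in the text. Your overall strategy --- interleaving an \(MU\)-based Adams resolution with the skeletal filtration of \(EG\) and comparing the two edges of the resulting bifiltered object --- is the standard template and is broadly in the spirit of the cited source. But two of your steps are genuine gaps rather than routine verifications. First, it is not true without argument that applying \(F(EG_{+},-)^{G}\) termwise to the \(\MUG\)-based Adams tower \(\{\bar{I}^{\wedge s}\wedge E\}\) produces the Adams--Novikov spectral sequence of \(E^{hG}\): homotopy fixed points do not commute with smash products, so the layers \(F(EG_{+},\MUG\wedge\bar{I}^{\wedge s}\wedge E)^{G}\) are not of the form \(MU\wedge(-)\) applied to a resolution of \(E^{hG}\), the resulting tower is not an \(MU\)-based Adams resolution of \(E^{hG}\), and the asserted \(E_{2}\)-page \(\Ext_{MU_{*}MU}(MU_{*},MU_{*}(E^{hG}))\) is exactly the identification that has to be established. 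You acknowledge this and refer it back to \cite[Section 11.3.3]{HHR}; since that identification (or a construction circumventing it) is the substance of the theorem, the proposal is circular at its crucial point.

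Second, ``standard bifiltration machinery'' does not by itself yield a map between the two edge spectral sequences. A bifiltration gives each single-index spectral sequence a comparison with the spectral sequence of the total filtration, but to obtain a map in the stated direction --- from the Adams--Novikov spectral sequence of the fixed points to the homotopy fixed point spectral sequence of \(E\) --- one needs an additional degeneration statement, roughly that after applying \(F(EG_{+},-)^{G}\) the Adams-resolution direction collapses because the terms \(\MUG\wedge\bar{I}^{\wedge s}\wedge E\) behave like injectives for the homotopy fixed point filtration, together with a check of naturality and convergence for the composite. Pinning down that degeneration (and hence why the map goes in this direction rather than the other) is where the real work of the cited argument lies; as written, your outline postpones precisely these points to the reference being proved.
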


Putting this together, we deduce an upper bound on the Hurewicz image for any of the \(\Z\)-homotopy fixed points of the norms of \(\MUR\).

\begin{theorem}
The Hurewicz image of \(\big(\MUG\big)^{h\Z}\) factors through the homotopy of the connective image of \(j\) spectrum.
\end{theorem}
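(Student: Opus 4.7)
The plan is to chain the corollary just above (the slice filtration bound on Hurewicz image classes in $\MUG/a_{\lambda(1)}$) with Ullman's agreement of the slice and homotopy fixed point spectral sequences, then with the HHR comparison from Adams--Novikov to HFPSS, and finally with the classical identification of the low-filtration part of the Adams--Novikov spectral sequence as the image of $J$.

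First I would pick $x \in \pi_{k}^{G}S^{0}$ in the Hurewicz image of $\MUG^{h\Z}$, i.e.\ with non-zero image $\bar x$ in $\pi_{k}^{G}(\MUG/a_{\lambda(1)})$. The unit factors as $S^{0}\to \MUG\to\MUG/a_{\lambda(1)}$; write $\tilde x$ for the image in $\MUG$. By the preceding corollary, since $\tilde x$ survives the quotient to $\MUG/a_{\lambda(1)}$, its slice filtration is at most $1$. By Ullman's theorem, in this range the slice spectral sequence for $\MUG^{h\Z}$ agrees with its homotopy fixed point spectral sequence, so $\bar x$ is also represented in HFPSS filtration at most $1$. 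Applying the HHR comparison from the Adams--Novikov spectral sequence of the sphere (converging to $\pi_{*}\MUG^{h\Z}$ via its homotopy fixed points) into the HFPSS, and using that the comparison is induced by a map of filtered towers, I conclude that $x$ itself has Adams--Novikov filtration at most $1$.

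Finally I would invoke the classical fact that the $0$- and $1$-lines of the Adams--Novikov spectral sequence for the sphere are detected precisely by the connective image-of-$j$ spectrum: filtration zero contributes the $\pi_{0}$-integer, and filtration one gives the image of the $J$-homomorphism. This exhibits the desired factorisation of the Hurewicz image through $\pi_{*}j$. The point I expect to require the most care is the direction of filtration preservation in the HHR step: one must verify that a class whose image in the HFPSS has filtration $\leq s$ necessarily has Adams--Novikov filtration $\leq s$, which follows because a filtration-preserving map sends $F^{s+1}$ of the source into $F^{s+1}$ of the target and so kills any source class of strictly larger filtration than its image. With this, the argument packages cleanly into the stated factorisation.
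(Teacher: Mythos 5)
Your proposal follows essentially the same route as the paper's proof: bound the slice filtration of Hurewicz-image classes by \(1\) via the quotient to \(\MUG/a_{\lambda(1)}\), pass to the homotopy fixed point spectral sequence by Ullman's theorem, use that the Adams--Novikov to HFPSS comparison can only increase filtration to get Adams--Novikov filtration at most \(1\), and then invoke the classical identification (Miller--Ravenel--Wilson) of the \(0\)- and \(1\)-lines with the image of \(j\). The filtration-direction point you flag is exactly the step the paper summarizes as ``a map of spectral sequences can only increase filtration,'' so the argument is correct and matches the paper.
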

\begin{proof}
Since a map of spectral sequences can only increase filtration, the Adams--Novikov filtration of any element in the Hurewicz image is bound above by its slice filtration, and this is at most \(1\). The result follows from \cite{MRW} (See \cite[Theorem 5.2.6(c,d)]{GreenBook} for more detail).
\end{proof}

\section{Towards the homotopy Mackey functors of \texorpdfstring{\(\MUG/a_{\lambda(2^{j})}\)}{MUG mod a}}\label{sec:FurtherComputations}
One of the computationally exciting features of the spectra \(\MUG/a_{\lambda(k)}\) is that we effectively isolate the contribution of a single Euler class. In the case where all orientable Euler classes were killed above, the putative orientation classes \(u_{V}\) all survived the slice spectral sequence, giving orientations for \(\MUG/a_{\lambda(1)}\). Working more generally, not all of these will survive. We illustrate this with the first non-trivial examples.

\subsection{Surviving orientation classes}
While not all orientation classes survive the slice spectral sequence for \(j>0\), many still do.
\begin{theorem}\label{thm:SurvivingOrientationClasses}
For all \(k>j\), the orientation classes \(u_{\lambda(2^{k})}\) survive the slice spectral sequence for \(\MUG/a_{\lambda(2^{j})}\).
\end{theorem}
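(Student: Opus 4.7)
The plan is to exploit the $\cF_{2^j}$-locality of $\MUG/a_{\lambda(2^j)}$ established in Theorem~\ref{thm:Locality} to produce $u_{\lambda(2^k)}$ directly as an invertible class in the $RO(C_{2^n})$-graded homotopy of $\MUG/a_{\lambda(2^j)}$, bypassing any case-by-case analysis of potential slice differentials.

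The representation-theoretic input is the observation that for $k > j$, the representation $\lambda(2^k)$ becomes trivial on $C_{2^j}$, and hence on every subgroup in $\cF_{2^j}$. Indeed, the generator $\gamma^{2^{n-j}}$ of $C_{2^j}$ acts on $\lambda(2^k)$ by multiplication by $\zeta^{2^{n-j+k}}$, and the hypothesis $k > j$ forces this exponent to be a positive multiple of $2^n$, so the action is trivial. Consequently, a choice of underlying linear isomorphism assembles into a $C_{2^n}$-equivariant equivalence of pointed spaces
\[
E\cF_{2^j+} \wedge S^{\lambda(2^k)} \simeq E\cF_{2^j+} \wedge S^{2}.
\]

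Applying $F(-, \MUG/a_{\lambda(2^j)})$ to this equivalence and invoking Theorem~\ref{thm:Locality} then yields an equivariant equivalence
\[
\Sigma^{2}\, \MUG/a_{\lambda(2^j)} \xrightarrow{\simeq} \Sigma^{\lambda(2^k)}\, \MUG/a_{\lambda(2^j)},
\]
which is realized as multiplication by a distinguished invertible class in $\pi_{2-\lambda(2^k)}^{C_{2^n}}(\MUG/a_{\lambda(2^j)})$. Since this class exists in the abutment, it is automatically detected by a nonzero element at some $E_\infty$-filtration of the slice spectral sequence.

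The only remaining step, and the main bookkeeping obstacle, is to identify the detected class with $u_{\lambda(2^k)}$ on the $E_2$-page rather than some twisted variant. For this, one runs the same construction on $H\mZ/a_{\lambda(2^j)}$: by inspection the resulting invertible class is precisely the classical orientation $u_{\lambda(2^k)}$ in $\pi_{2-\lambda(2^k)}^{C_{2^n}}(H\mZ/a_{\lambda(2^j)})$, and the map of slice spectral sequences induced by the zero-slice map $\MUG \to H\mZ$ carries the $\MUG/a_{\lambda(2^j)}$-class to it in filtration zero. This forces the constructed class to be a permanent cycle detecting $u_{\lambda(2^k)}$, giving the theorem.
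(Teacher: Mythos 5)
The proposal has a genuine gap at its very first step. You assert a $C_{2^n}$-equivariant equivalence $E\cF_{2^j+}\wedge S^{\lambda(2^k)}\simeq E\cF_{2^j+}\wedge S^{2}$ on the sole grounds that $\lambda(2^k)$ restricts trivially to every subgroup of $C_{2^j}$. That inference is invalid: the homotopy type of $E\cF_{+}\wedge S^{V}$ is not determined by the restrictions of $V$ to the members of $\cF$, and the orbitwise untwisting isomorphisms $G/H_{+}\wedge S^{V}\cong G/H_{+}\wedge S^{|V|}$ do not assemble, because the chosen linear trivialization is not $G$-equivariant. The simplest counterexample to the principle you invoke: take $G=C_{2}$, $\cF=\{e\}$, $V=2\sigma$. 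Then $i_{e}^{\ast}V$ is trivial, but $EC_{2+}\wedge S^{2\sigma}\not\simeq EC_{2+}\wedge S^{2}$; by the Adams isomorphism their fixed points are $\Sigma^{\infty}(\mathbb{RP}^{\infty}/\mathbb{RP}^{1})$ (the Thom spectrum of $2L$) and $\Sigma^{2}\Sigma^{\infty}\mathbb{RP}^{\infty}_{+}$, which are distinguished by $Sq^{2}$ on the bottom cohomology class ($Sq^{2}(x^{2})=x^{4}\neq 0$ versus zero on a suspended unit). What $E\cF_{+}\wedge S^{V-|V|}$ remembers is the $J$-homotopy type of $V$ over the relevant classifying space, and this is typically nontrivial. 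Indeed, your claim already fails inside the range of the theorem: for $G=C_{4}$, $j=0$, $k=1$ (so $\lambda(2)=2\sigma$), the claimed equivalence would force $\Sigma^{2\sigma}F(EC_{4+},X)\simeq\Sigma^{2}F(EC_{4+},X)$ for \emph{every} $C_{4}$-spectrum $X$, i.e.\ stable fiber-homotopy triviality of $2\sigma$ over $BC_{4}$; but odd Adams operations act as the identity on $RO(C_{4})$ and hence on $\widetilde{KO}(BC_{4})$, so $2(\sigma-1)$ has nonzero $J$-class and no such periodicity holds for the local sphere.

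Note also the reversal of logic relative to the paper: the suspension periodicity $\Sigma^{\lambda(2^k)}\MUG/a_{\lambda(2^j)}\simeq\Sigma^{2}\MUG/a_{\lambda(2^j)}$ is what the paper \emph{deduces} from this theorem, via Lemma~\ref{lem:CoFree}, by applying locality to an actual map, namely multiplication by the homotopy class $u_{\lambda(2^k)}$ — and producing that class as a permanent cycle is exactly the content of the theorem. The paper's proof instead works on the $E_{2}$-page: $u_{\lambda(2^k)}$ already exists and is invertible in $\pi_{\star}\big(H\mZ/a_{\lambda(2^j)}\big)$ (it is an honest class for $H\mZ$, restricts to a unit on $C_{2^j}$, and Theorem~\ref{thm:Locality} applies), so the $E_{2}$-terms computing $\m{\pi}_{0}$ and $\m{\pi}_{2-\lambda(2^k)}$ of $\MUG/a_{\lambda(2^j)}$ are isomorphic, and the vanishing regions then leave no room for differentials on $u_{\lambda(2^k)}$. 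To repair your outline you would have to produce $u_{\lambda(2^k)}$ as a surviving class before invoking any periodicity, which is precisely the statement being proved; the concluding comparison with $H\mZ/a_{\lambda(2^j)}$ is also only sketched, but the argument does not get that far.
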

\begin{proof}
Since the classes \(u_{\lambda(2^{k})}\) are all invertible in the \(RO(G)\)-graded homotopy of \(H\mZ/a_{\lambda(2^{j})}\), the \(E_{2}\) terms computing
\[
\m{\pi}_{0}\big(\MUG/a_{\lambda(2^{j})}\big)\text{ and } \m{\pi}_{(2-\lambda(2^{k}))}\big(\MUG/a_{\lambda(2^{j})}\big)
\]
are isomorphic. By the vanishing lines in this slice spectral sequence, there are no possible targets for the differentials on the classes \(u_{\lambda(2^{k})}\) for \(k>j\).
\end{proof}

This in turn gives us periodicity results analogous to those we deduced for the reduction modulo \(a_{\lambda(1)}\). Theorem~\ref{thm:Locality} above shows that the spectrum \(\MUG/a_{\lambda(2^{j})}\) is local for the family of subgroups of \(C_{2^{j}}\).

\begin{lemma}\label{lem:CoFree}
Let \(X\) and \(Y\) be finite \(G\)-CW complexes, and let
\[
f\colon \big(\MUG/a_{\lambda(2^{j})}\big)\wedge X\to \big(\MUG/a_{\lambda(2^{j})}\big)\wedge Y.
\]
If the restriction \(i_{C_{2^{j}}}^{\ast}f\) is an equivariant equivalence, then \(f\) is a \(C_{2^{n}}\)-equivariant equivalence.
\end{lemma}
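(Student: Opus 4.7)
The plan is to verify that $f$ is a $G$-equivalence by checking the geometric fixed points $\Phi^H f$ for every subgroup $H \subset G = C_{2^n}$. Since the subgroups are exactly the $C_{2^k}$ for $0\le k\le n$, the argument naturally splits into two cases depending on whether $k\le j$ or $k>j$.

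For $k\le j$, the subgroup $C_{2^k}$ lies in $C_{2^j}$, so $\Phi^{C_{2^k}}f = \Phi^{C_{2^k}}\big(i_{C_{2^j}}^{\ast}f\big)$ (geometric fixed points are insensitive to the ambient group). Since $i_{C_{2^j}}^{\ast}f$ is a $C_{2^j}$-equivariant equivalence by hypothesis, its geometric $C_{2^k}$-fixed points form an equivalence of ordinary spectra. So this case is immediate.

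For $k>j$, the key observation is that $\Phi^{C_{2^k}}\big(\MUG/a_{\lambda(2^j)}\big)\simeq\ast$. Indeed, applying $\Phi^{C_{2^k}}$ to the defining cofiber sequence
\[
\Sigma^{-\lambda(2^j)}\MUG \xrightarrow{a_{\lambda(2^j)}} \MUG \to \MUG/a_{\lambda(2^j)}
\]
and using that $\lambda(2^j)^{C_{2^k}}=0$ for $k>j$, the first term becomes $\Phi^{C_{2^k}}\MUG$ and the first map becomes $\Phi^{C_{2^k}}(a_{\lambda(2^j)})$, which is an equivalence since $a_V$ induces the identity on $H$-geometric fixed points whenever $V^H=0$. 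Therefore the cofiber vanishes geometrically. Since $X$ and $Y$ are finite $G$-CW complexes, $\Phi^{C_{2^k}}$ commutes with smashing against them, so both $\Phi^{C_{2^k}}\big((\MUG/a_{\lambda(2^j)})\wedge X\big)$ and $\Phi^{C_{2^k}}\big((\MUG/a_{\lambda(2^j)})\wedge Y\big)$ are contractible and $\Phi^{C_{2^k}}f$ is trivially an equivalence.

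Combining both cases, $\Phi^H f$ is an equivalence for every subgroup $H\subset G$, so $f$ is a $G$-equivariant equivalence. The argument is largely formal; the only real input is the geometric fixed point computation for $k>j$, which ultimately reflects Theorem~\ref{thm:Locality} but here admits the very direct argument above. I do not foresee a major obstacle beyond ensuring the standard compatibility of geometric fixed points with smash products of finite $G$-CW complexes, which is classical.
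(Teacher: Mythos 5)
Your proof is correct, and it takes a genuinely different route from the paper's. The paper disposes of the lemma in one sentence using Theorem~\ref{thm:Locality}: \(\MUG/a_{\lambda(2^{j})}\) is local for the family \(\mathcal F_{2^{j}}\) of subgroups of \(C_{2^{j}}\), smashing with a finite \(G\)-CW complex preserves this locality (finiteness is what lets the function spectrum \(F(E\mathcal F_{2^{j}+},-)\) pass across the smash), and for a map of \(\mathcal F_{2^{j}}\)-local spectra an equivalence after restriction to \(C_{2^{j}}\) is automatically a \(G\)-equivalence. You instead work on the isotropy-separation/torsion side: equivalences of genuine \(G\)-spectra are detected by geometric fixed points, \(\Phi^{C_{2^{k}}}\big(\MUG/a_{\lambda(2^{j})}\big)\simeq\ast\) for \(k>j\) because \(\Phi^{C_{2^{k}}}(a_{\lambda(2^{j})})\) is an equivalence when \(\lambda(2^{j})^{C_{2^{k}}}=0\), and for \(k\le j\) the hypothesis on \(i_{C_{2^{j}}}^{\ast}f\) applies. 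Note that these arguments exploit two logically distinct properties: you show the spectrum is concentrated over \(\mathcal F_{2^{j}}\) (its \(\tilde E\mathcal F_{2^{j}}\)-localization vanishes), while the paper uses that it is \(\mathcal F_{2^{j}}\)-local in the function-spectrum sense; either suffices for the detection claim. What each buys: your argument is self-contained, does not invoke Theorem~\ref{thm:Locality}, and—since geometric fixed points are monoidal and satisfy \(\Phi^{H}(E\wedge X)\simeq\Phi^{H}E\wedge X^{H}\) for any \(G\)-space \(X\)—it never actually needs \(X\) and \(Y\) to be finite, so it proves a slightly more general statement; the paper's proof, by contrast, genuinely uses finiteness but is an immediate corollary of machinery it has already established and reuses elsewhere.
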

\begin{proof}
Since \(X\) and \(Y\) are assumed to be finite \(G\)-CW complexes, the smash products with \(\MUG/a_{\lambda(2^{j})}\) are again \(C_{2^{j}}\)-local. This is the statement of the lemma.
\end{proof}

\begin{corollary}
If \(V\) is any orientable representation of \(C_{2^{n}}\) such that \(i_{C_{2^{j}}}^{\ast}V\) is trivial, then multiplication by \(u_{V}\) induces an equivariant equivalence
\[
\Sigma^{V}\big(\MUG/a_{\lambda(2^{j})}\big)\xrightarrow{u_{V}}\Sigma^{\dim V}\big(\MUG/a_{\lambda(2^{j})}\big).
\]
\end{corollary}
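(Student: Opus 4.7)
The plan is to invoke Lemma~\ref{lem:CoFree} to reduce the statement to its restriction to \(C_{2^{j}}\). Since \(\Sigma^{V}\) and \(\Sigma^{\dim V}\) amount to smashing with the finite \(C_{2^{n}}\)-CW complexes \(S^{V}\) and \(S^{\dim V}\), the multiplication map
\[
\Sigma^{V}\big(\MUG/a_{\lambda(2^{j})}\big) \xrightarrow{u_{V}} \Sigma^{\dim V}\big(\MUG/a_{\lambda(2^{j})}\big)
\]
falls within the scope of the lemma. Thus it suffices to show that \(i_{C_{2^{j}}}^{\ast}\) applied to this map is a \(C_{2^{j}}\)-equivariant equivalence.

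By hypothesis \(i_{C_{2^{j}}}^{\ast}V\) is a trivial \(C_{2^{j}}\)-representation, so a choice of isomorphism \(i_{C_{2^{j}}}^{\ast}V \cong \mathbb{R}^{\dim V}\) identifies the restricted suspension \(\Sigma^{i_{C_{2^{j}}}^{\ast}V}\) with the integer suspension \(\Sigma^{\dim V}\). Orientation classes are natural under restriction, giving \(i_{C_{2^{j}}}^{\ast}u_{V} = u_{i_{C_{2^{j}}}^{\ast}V}\); under the chosen isomorphism this corresponds to \(u_{\mathbb{R}^{\dim V}}\), the orientation class of a trivial representation sphere. But this is by definition the canonical generator of \(H_{\dim V}^{C_{2^{j}}}(S^{\dim V};\mZ) \cong \mathbb{Z}\), so it is \(\pm 1\), hence a unit in \(\pi_{0}^{C_{2^{j}}}\big(\MUG/a_{\lambda(2^{j})}\big)\). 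Multiplication by a unit is tautologically an equivariant self-equivalence, completing the reduction.

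The main obstacle is the identification of \(i_{C_{2^{j}}}^{\ast}u_{V}\) with a unit: this rests on the naturality \(i_{H}^{\ast}u_{V} = u_{i_{H}^{\ast}V}\) of orientation classes and on the standard identification of the top Bredon homology of a trivial representation sphere with \(\mathbb{Z}\). A secondary concern, which must be addressed before the corollary is even well-posed, is the existence of \(u_{V}\) as a class in \(\pi_{\star}^{C_{2^{n}}}\big(\MUG/a_{\lambda(2^{j})}\big)\); for \(V\) a sum of trivials and \(\lambda(2^{k})\)'s with \(k > j\) this is immediate from Theorem~\ref{thm:SurvivingOrientationClasses} together with multiplicativity, and for summands of the form \(\lambda(2^{j})\) it should follow from the fact that the obstruction differentials on such a class are multiples of \(a_{\lambda(2^{j})}\), which vanishes in the quotient.
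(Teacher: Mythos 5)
Your argument is essentially the paper's: it likewise produces \(u_{V}\) as a permanent cycle via Theorem~\ref{thm:SurvivingOrientationClasses}, observes that the restriction of \(u_{V}\) to \(C_{2^{j}}\) is the unit \(1\), and concludes with Lemma~\ref{lem:CoFree}, so the proposal is correct and follows the same route. The only divergence is your (legitimate) worry about summands of the form \(\lambda(2^{j}r)\) with \(r\) odd, which the hypothesis permits but which Theorem~\ref{thm:SurvivingOrientationClasses} does not literally cover; the paper's own proof elides exactly this point by asserting that all irreducible summands have \(k>j\), so the loose end you flag is present in the original argument as well and your sketched resolution (the relevant differentials are \(a_{\lambda(2^{j})}\)-multiples, hence vanish after the quotient) is the expected fix.
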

\begin{proof}
By assumption, the irreducible summands of \(V\) are all of the form \(\lambda(2^{k}r)\) for \(k>j\). In particular, the corresponding class \(u_{V}\) is a permanent cycle, and hence homotopy class, by Theorem~\ref{thm:SurvivingOrientationClasses}. Since the restriction of \(u_{V}\) to \(C_{2^{j}}\) is the element \(1\), Lemma~\ref{lem:CoFree} gives the result.
\end{proof}

\subsection{The remaining orientation classes}
Unsurprisingly, the remaining orientation classes all support differentials in the slice spectral sequence, just as in the initial case of \(\MUG\) itself.

\begin{theorem}
Let \(V\) be a representation such that \(V^{C_{2^{j}}}\neq V\). Then \(u_{V}\) supports a non-trivial differential in the slice spectral sequence.
\end{theorem}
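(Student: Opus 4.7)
The plan is to argue by contradiction: suppose \(u_V\) is a permanent cycle in the slice spectral sequence for \(\MUG/a_{\lambda(2^{j})}\), and derive a contradiction by restricting to \(C_{2^{j}}\) and invoking the slice differential theorem of HHR.

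Since \(\lambda(2^{j})\) restricts to the trivial two-dimensional representation of \(C_{2^{j}}\), the Euler class \(a_{\lambda(2^{j})}\) restricts to the nullhomotopic map \(S^{0}\to S^{2}\). Combining this with Remark~\ref{rem:TrivialQuotient}, we obtain
\[
i_{C_{2^{j}}}^{\ast}\bigl(\MUG/a_{\lambda(2^{j})}\bigr) \;\simeq\; i_{C_{2^{j}}}^{\ast}\MUG \,\vee\, \Sigma^{-1} i_{C_{2^{j}}}^{\ast}\MUG.
\]
Restriction is compatible with the slice filtration, so we get a map from the \(C_{2^{n}}\)-slice spectral sequence for \(\MUG/a_{\lambda(2^{j})}\) to the \(C_{2^{j}}\)-slice spectral sequence for the wedge above, under which \(u_{V}\) is sent to \(u_{i^{\ast}V}\) on the \(i_{C_{2^{j}}}^{\ast}\MUG\) summand; a permanent cycle must restrict to a permanent cycle there.

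Because \(V^{C_{2^{j}}}\neq V\), the restricted representation \(i_{C_{2^{j}}}^{\ast}V\) contains a non-trivial irreducible summand \(\lambda(k)\) of \(C_{2^{j}}\) with \(2^{j}\nmid k\). HHR's slice differential theorem, applied to \(i_{C_{2^{j}}}^{\ast}\MUG \simeq MU^{((C_{2^{j}}))}\), shows that \(u_{\lambda(k)}\) supports a non-trivial differential with target of the form \(a_{\lambda(k)}\cdot P\) for some non-zero polynomial \(P\) in the generators \(\br_{i}\) and their translates. Factoring \(u_{i^{\ast}V} = u_{\lambda(k)}\cdot u_{W}\) and applying the Leibniz rule forces \(u_{i^{\ast}V}\) to support a non-trivial differential as well, contradicting the previous paragraph. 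Since classes in slice filtration zero cannot be boundaries of slice differentials, failing to be a permanent cycle is equivalent to supporting a non-trivial differential, completing the proof.

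The main obstacle is extending HHR's differential from the sign-representation case \(u_{2\sigma}\) to \(u_{\lambda(k)}\) for every non-trivial irreducible \(\lambda(k)\): writing \(k = 2^{a}r\) with \(r\) odd, one reduces via inflation-restriction to the faithful case on the quotient group \(C_{2^{j-a}}\), applies HHR's theorem there, and uses the Euler class divisibilities of Section~\ref{sec:TheEulerClasses} (together with the \(p\)-local invariance of \(a_{\lambda(k)}\) up to \(\gcd\)) to match up source and target grading under the norm.
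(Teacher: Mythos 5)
There is a genuine gap at the step where you invoke ``HHR's slice differential theorem'' for \(u_{\lambda(k)}\). Theorem 9.9 of \cite{HHR} only gives the differentials on the powers of \(u_{2\sigma}\), the orientation class of twice the \emph{one-dimensional sign} representation; it says nothing about \(u_{\lambda(k)}\) for a two-dimensional irreducible representation of \(C_{2^{j}}\), faithful or not. Determining the fate of such \(u_{\lambda}\) classes is precisely the hard part of the known \(C_{4}\)- and \(C_{8}\)-computations and does not follow formally from Theorem 9.9; your sketch of ``inflation--restriction to the faithful case on the quotient group \(C_{2^{j-a}}\)'' does not repair this, both because the slice filtration does not interact simply with inflation and because even in the faithful case the theorem still only controls \(u_{2\sigma}\), not \(u_{\lambda}\). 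A secondary problem is the Leibniz step: from \(u_{i^{\ast}V}=u_{\lambda(k)}u_{W}\) and a nonzero differential on \(u_{\lambda(k)}\) you cannot conclude a nonzero differential on the product without also controlling \(d(u_{W})\) and possible cancellation, nor without knowing that \(u_{W}\cdot d(u_{\lambda(k)})\) is nonzero on the relevant page. (A minor point: \(i_{C_{2^{j}}}^{\ast}\MUG\) is a smash power of \(MU^{((C_{2^{j}}))}\) by the double coset formula, not \(MU^{((C_{2^{j}}))}\) itself, though this is harmless.)

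The paper's proof avoids all of this by restricting further than \(C_{2^{j}}\): it takes \(H=C_{2^{k}}\) to be the \emph{smallest} subgroup on which \(V\) restricts nontrivially (necessarily \(H\subset C_{2^{j}}\)). Minimality forces every nontrivial irreducible summand of \(V\) to restrict to copies of the sign representation of \(H\), so \(i_{H}^{\ast}V=a\R\oplus 2^{r}b\sigma\) with \(b\) odd and \(i_{H}^{\ast}u_{V}=(u_{2\sigma}^{2^{r-1}})^{b}\), a class to which \cite[Theorem 9.9]{HHR} applies verbatim; the odd coefficient \(b\) guarantees the differential is nonzero, and naturality of restriction (your first two paragraphs, which are correct, with \(C_{2^{j}}\) replaced by \(H\)) then forces \(u_{V}\) to support a differential of at most that length. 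If you redirect your restriction to this minimal subgroup, your argument becomes the paper's; as written, the needed input about \(u_{\lambda(k)}\) is unproved and is not in the literature you cite.
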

\begin{proof}
Let \(H=C_{2^{k}}\) be the smallest subgroup of \(C_{2^{n}}\) such that \(i_{H}^{\ast}V\) is not trivial. By assumption, the largest possible value for this group is \(C_{2^{j}}\). By the definition of \(H\), we must then have
\[
i_{H}^{\ast} V=a\R\oplus 2^{r}b\sigma,
\]
for some natural number \(a\) and natural numbers \(r>0\) and \(b\) odd, and hence
\[
i_{H}^{\ast}u_{V}\cong (u_{2\sigma}^{2^{r-1}})^{b}.
\]
Since \(H\subset C_{2^{j}}\), we know that
\[
i_{H}^{\ast}\MUG/a_{\lambda(2^{j})}\simeq i_{H}^{\ast}\MUG\vee\Sigma^{-1}i_{H}^{\ast}\MUG,
\]
and the associated spectral sequence is just two copies of the slice spectral sequence for \(i_{H}^{\ast}\MUG\). In particular, the class \((u_{2\sigma}^{2^{r-1}})^{b}\) supports a differential here \cite[Theorem 9.9]{HHR}:
\[
d_{(2^{r}-1)(2^{k}-1)+2^{r}}\big((u_{2\sigma}^{2^{r-1}})^{b}\big)=b(u_{2\sigma}^{2^{r-1}})^{b-1}\Big(N_{C_{2}}^{C_{2^{k}}}r_{2^{r}-1}a_{(2^{r}-1)\bar{\rho}_{2^{k}}}a_{\sigma}^{2^{r}}\Big)
.
\]
By naturality, we therefore deduce that \(u_{V}\) must have supported as differential of potentially smaller length, completing the result.
\end{proof}

Summarizing, we have a tower of spectra with \((n+1)\)-layers from Proposition~\ref{prop:Tower}:
\[
\MUG/a_{\lambda(2^{n})}\to \MUG/a_{\lambda(2^{n-1})}\to\dots\to \MUG/a_{\lambda(1)}.
\]
The slice filtration of \(\MUG\) gives a spectral sequence computing the \(RO(G)\)-graded homotopy groups of each of these spectra, and the maps in the tower produce maps of spectral sequences. The first spectrum is \(\MUG\vee\Sigma^{-1}\MUG\), and for exposition, we may replace it with the unit copy of \(\MUG\).

At one end, we have the ordinary slice spectral sequence for the \(RO(G)\)-graded homotopy groups of \(\MUG\). At the other end, we have the slice spectral sequence for the \(RO(G)\)-graded homotopy of \(\MUG/a_{\lambda(1)}\) considered in Section~\ref{sec:Computations}. In the first, we have all Euler classes \(a_{V}\) and no surviving orientation classes \(u_{V}\). In the last, we have no Euler classes and all of the orientation classes \(u_{V}\) survive. As we pass from one spectral sequence to the next, we kill another Euler class and find that the corresponding orientation class (and all powers) survive. Since the complicated part of the slice spectral sequence is exactly the orientation classes (all other classes are permanent cycles), this procedure allows us to isolate each orientation class, one irreducible at a time.

\bibliographystyle{plain}

\bibliography{slices}

\end{document}